\title{A decomposition theorem for \{ISK4,wheel\}-free trigraphs}
\author{Martin Milani\v c\thanks{University of Primorska, UP IAM, UP FAMNIT, Koper, Slovenia.
Partially supported by the Slovenian Research Agency (I$0$-$0035$, research program P$1$-$0285$ and research projects N$1$-$0032$, J$1$-$5433$, J$1$-$6720$, J$1$-$6743$, and J$1$-$7051$). E-mail: \texttt{martin.milanic@upr.si}.
}
\and Irena Penev \thanks{Department of Applied Mathematics and Computer Science, Technical University of Denmark, Lyngby, Denmark. Most of this work was conducted while the author was at Universit\'e de Lyon, LIP, ENS de Lyon, Lyon, France. Partially
supported by the ANR Project \textsc{Stint} under \textsc{Contract ANR-13-BS02-0007}, by the LABEX MILYON (ANR-10-LABX-0070) of Universit\'e de
    Lyon, within the program ‘‘Investissements d'Avenir’’
    (ANR-11-IDEX-0007) operated by the French National Research Agency
    (ANR), and by the ERC Advanced Grant GRACOL, project number 320812. Email: \texttt{ipen@dtu.dk}.}
  \and Nicolas Trotignon\thanks{Unviersit\'e de Lyon, CNRS, LIP, ENS de
  Lyon. Partially supported by ANR project Stint under reference
  ANR-13-BS02-0007 and by the LABEX MILYON (ANR-10-LABX-0070) of
  Universit\'e de Lyon, within the program ‘‘Investissements
  d'Avenir’’ (ANR-11-IDEX-0007) operated by the French National
  Research Agency (ANR). E-mail: \texttt{nicolas.trotignon@ens-lyon.fr}.}
}
\newtheorem{lemma}{Lemma}[section]
\newtheorem{proposition}[lemma]{Proposition}
\newtheorem{theorem}[lemma]{Theorem}
\newtheorem*{thm-decomp}{Theorem~\ref{thm-decomp}}
\begin{document}
\maketitle

\begin{abstract}
  An \emph{ISK4} in a graph $G$ is an induced subgraph of $G$ that is
  isomorphic to a subdivision of $K_4$ (the complete graph on four
  vertices). A \emph{wheel} is a graph that consists of a chordless cycle, together with a vertex that has at least three
  neighbors in the cycle.  A graph is \{ISK4,wheel\}-free if it has
  no ISK4 and does not contain a wheel as an induced subgraph. A ``trigraph''
  is a generalization of a graph in which some pairs of vertices have
  ``undetermined'' adjacency. We prove a decomposition theorem for
  \{ISK4,wheel\}-free trigraphs. Our proof closely follows the
  proof of a decomposition theorem for ISK4-free graphs due to
  L{\'e}v{\^e}que, Maffray, and Trotignon
  (On graphs with no induced subdivision of $K_4$. J.~Combin.~Theory Ser.~B, 102(4):924--947, 2012).
\end{abstract}

\section{Introduction}
\label{s:intro}

All graphs in this manuscript are finite and simple. If $H$ and $G$ are graphs, we say that $G$ is {\em $H$-free} if $G$ does not contain (an isomorphic copy of) $H$ as an induced subgraph. If $\mathcal{H}$ is a family of graphs, a graph $G$ is said to be {\em $\mathcal{H}$-free} if $G$ is $H$-free for all $H \in \mathcal{H}$.

An \emph{ISK4} in a graph $G$ is an induced subgraph of $G$ that is isomorphic to a subdivision of $K_4$ (the complete graph on four vertices). A \emph{wheel} is a graph that consists of a chordless cycle, together with a vertex that has at least three neighbors in the cycle. L{\'e}v{\^e}que, Maffray, and Trotignon~\cite{MR2927414} proved a decomposition theorem for ISK4-free graphs and then derived a decomposition theorem for \{ISK4,wheel\}-free graphs as a corollary. Here, we are interested in a class that generalizes the class of \{ISK4,wheel\}-free graphs, namely, the class of \{ISK4,wheel\}-free ``trigraphs.'' Trigraphs (originally introduced by Chudnovsky~\cite{chudnovsky:these,chudnovsky:trigraphs} in the context of Berge graphs) are a generalization of graphs in which certain pairs of vertices may have ``undetermined'' adjacency (one can think of such pairs as ``optional edges''). Every graph can be thought of as a trigraph: a graph is simply a trigraph with no ``optional edges.'' Trigraphs and related concepts are formally defined in Section~\ref{s:trigraphs}.

We now wish to state the decomposition theorem for \{ISK4,wheel\}-free graphs from~\cite{MR2927414}, but we first need a few definitions. A graph is {\em series-parallel} if it does not contain any subdivision of $K_4$ as a (not necessarily induced) subgraph.
The {\em line graph} of a graph $H$, denoted by $L(H)$, is the graph whose vertices are the edges of $H$, and in which two vertices (i.e., edges of $H$) are adjacent if they share an endpoint in $H$. A graph is {\em chordless} if all its cycles are induced.

If $H$ is an induced subgraph of a graph $G$ and $v \in V(G) \smallsetminus V(H)$, then the {\em attachment} of $v$ over $H$ in $G$ is the set of all neighbors of $v$ in $V(H)$. If $S$ is either a set of vertices or an induced subgraph of $G \smallsetminus V(H)$, then the {\em attachment} of $S$ over $H$ in $G$ is the set of all vertices of $H$ that are adjacent to at least one vertex of $S$. A {\em square} is a cycle of length four. If $S$ is an induced square of a graph $G$, say with vertices $a_1,a_2,a_3,a_4$ (with subscripts understood to be modulo $4$) that appear in that order in $S$, then a {\em long link} of $S$ in $G$ is an induced path $P$ of $G \smallsetminus V(S)$ that contains at least one edge, and satisfies the property that there is an index $i \in \{1,2,3,4\}$ such that the attachment of one endpoint of $P$ over $S$ is $\{a_i,a_{i+1}\}$, the attachment of the other endpoint of $P$ over $S$ is $\{a_{i+2},a_{i+3}\}$, and no interior vertex of $P$ has a neighbor in $S$. A {\em long rich square} is a graph $G$ that contains an induced square $S$ (called a {\em central square} of $G$) such that $G \smallsetminus V(S)$ contains at least two components, and all such components are long links of $S$ in $G$.

A {\em clique} of a graph $G$ is a (possibly empty) set of pairwise adjacent vertices of $G$, and a {\em stable set} of $G$ is a (possibly empty) set of pairwise non-adjacent vertices of $G$. A {\em cutset} of a graph $G$ is a (possibly empty) set of vertices whose deletion from $G$ yields a disconnected graph. A {\em clique-cutset} of a graph $G$ is clique of $G$ that is also a cutset of $G$. (Note that if $G$ is a disconnected graph, then $\emptyset$ is a clique-cutset of $G$.) A {\em proper 2-cutset} of a graph $G$ is a cutset $\{a,b\}$ of $G$ that is a stable set of size two such that $V(G) \smallsetminus \{a,b\}$ can be partitioned into two non-empty sets $X$ and $Y$ so that there is no edge between $X$ and $Y$ and neither $G[X \cup \{a,b\}]$ nor $G[Y \cup \{a,b\}]$ is a path between $a$ and $b$.

We are now ready to state the decomposition theorem for \{ISK4,wheel\}-free graphs from~\cite{MR2927414} (this is Theorem 1.2 from~\cite{MR2927414}).

\begin{theorem} \label{decomp-ISK4WheelFree-graph} \cite{MR2927414} Let $G$ be an \{ISK4,wheel\}-free graph. Then at least one of the following holds:
\begin{itemize}
\item $G$ is series-parallel;
\item $G$ is the line graph of a chordless graph with maximum degree at most three;
\item $G$ is a complete bipartite graph;
\item $G$ is a long rich square;
\item $G$ has a clique-cutset or a proper 2-cutset.
\end{itemize}
\end{theorem}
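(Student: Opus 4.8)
The plan is to \emph{derive} Theorem~\ref{decomp-ISK4WheelFree-graph} from the (more general) decomposition theorem for ISK4-free graphs established in~\cite{MR2927414}, exploiting the additional wheel-freeness hypothesis to sharpen two of its basic outcomes. That theorem states that every ISK4-free graph is series-parallel, is the line graph of a graph of maximum degree at most three, is complete bipartite, is a \emph{rich square} (the square-based structure defined exactly as a long rich square, except that its links are not required to contain an edge), or has a clique-cutset or a proper 2-cutset. Assuming in addition that $G$ is wheel-free, I would invoke this theorem and then treat each outcome in turn, checking that it either already appears in the list of Theorem~\ref{decomp-ISK4WheelFree-graph} or collapses into a member of that list.

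Three of the outcomes require no work. Series-parallel graphs, graphs with a clique-cutset, and graphs with a proper 2-cutset appear verbatim in both lists. Complete bipartite graphs are automatically wheel-free: the only holes of $K_{m,n}$ are squares (any longer alternating cycle has a chord), and no vertex has three neighbors on a square, so no wheel can occur. Hence these four outcomes transfer directly, and the entire content of the reduction lies in the two remaining basic classes.

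For the rich-square outcome, I would show that wheel-freeness forces every link of the central square $S = a_1a_2a_3a_4$ to be \emph{long}, so that a wheel-free rich square is precisely a long rich square. A rich square differs from a long rich square only in that a link may be a single vertex $v$; but such a degenerate link, playing the role of both endpoints of a long link, is then adjacent to all four corners of $S$. Since $S$ is a chordless $4$-cycle, $v$ is the hub of a wheel, contradicting the hypothesis. Therefore only links with at least one edge survive, and $G$ is a long rich square.

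The main obstacle is the line-graph outcome, where I must upgrade ``$G = L(H)$ with $\Delta(H) \le 3$'' to ``$G = L(H)$ with $H$ chordless and $\Delta(H) \le 3$.'' I would argue by contraposition: if $H$ is not chordless, it contains a cycle $C = v_1 \cdots v_k$ together with a chord $e = v_iv_j$, and I claim $L(H)$ then contains a wheel. The edges of $C$ form a hole in $L(H)$ (two edges of a cycle of length at least four are adjacent in the line graph exactly when they are consecutive on the cycle, so this cycle is induced), and the vertex $e$ of $L(H)$, being incident in $H$ with $v_i$ and $v_j$, is adjacent to the four cycle-edges of $C$ meeting $v_i$ or $v_j$; since $e$ is a chord, these four edges are pairwise distinct, so $e$ has four neighbors on the hole while lying off it. This exhibits a wheel, so $G$ is not wheel-free. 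The only real care needed is the verification that the cycle-edges genuinely form a hole and that no two of the four neighbors coincide; once this correspondence between chords of $H$ and wheels of $L(H)$ is in place, the line-graph outcome refines as required and the theorem follows. The genuine difficulty, of course, resides in the ISK4-free decomposition theorem of~\cite{MR2927414} on which this derivation rests; relative to it, Theorem~\ref{decomp-ISK4WheelFree-graph} is a careful but routine case analysis.
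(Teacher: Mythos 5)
The statement you are proving is not proved in this paper at all: it is quoted verbatim from \cite{MR2927414} (it is Theorem~1.2 there), and the only thing the present paper adds to it is the observation that the long-rich-square outcome is redundant, because every long rich square contains a wheel (this yields Theorem~\ref{decomp-ISK4WheelFree-graph-strong}). So your task is really to reconstruct the argument of \cite{MR2927414}. Your two local refinements are fine: a one-vertex link of the central square is adjacent to all four corners and is therefore a wheel hub, and a chord $v_iv_j$ of a cycle $C$ of $H$ is a vertex of $L(H)$ with four distinct neighbours on the hole induced by $E(C)$, so wheel-freeness of $L(H)$ forces $H$ to be chordless. (Incidentally, your rich-square case could be closed more brutally: as the paper points out, even \emph{long} rich squares contain wheels, so under the wheel-free hypothesis that outcome is vacuous.)

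The genuine gap is in the base theorem you invoke. The decomposition theorem for ISK4-free graphs in \cite{MR2927414} does \emph{not} conclude with ``a clique-cutset or a proper 2-cutset''; its cutset outcome is ``a clique-cutset, a proper 2-cutset, \emph{or a double star cutset},'' and eliminating the double star cutset from that theorem is a well-known difficulty (it is exactly why later work revisited ISK4-free graphs). Your case analysis never touches the double star cutset case, and it cannot be handled from the \emph{statement} of the ISK4-free theorem: a wheel-free graph with a double star cutset need not obviously have a clique-cutset or a proper 2-cutset. What \cite{MR2927414} actually does (and what the present paper redoes in the trigraph setting, in Section~\ref{sec:dec}) is re-run the structural lemmas --- attachments of components to a prism, to a line graph of a chordless subdivision of a $3$-connected graph, and to a $K_{3,3}$ --- under the additional wheel-freeness hypothesis; the wheel-containing configurations are precisely where the double star cutsets arise, and excluding wheels is what lets one land on clique-cutsets and proper $2$-cutsets (or, in the trigraph version, stable $2$-cutsets) instead. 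So the derivation is not a routine corollary of the ISK4-free decomposition theorem, and as written your proof has a missing (and in fact central) case.
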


We remark, however, that the fourth outcome of Theorem~\ref{decomp-ISK4WheelFree-graph} (that is, the outcome that $G$ is a long rich square) is in fact unnecessary. To see this, suppose that $G$ is a long rich square, and let $S$ be a central square of $G$. Choose any two-edge path $S'$ of the square $S$, and choose two components, call them $P_1$ and $P_2$, of $G \smallsetminus V(S)$. By the definition of a long rich square, $P_1$ and $P_2$ are long links of $S$ in $G$, and we see that $W = G[V(S') \cup V(P_1) \cup V(P_2)]$ is a wheel. Indeed, if $x$ is the ``central'' vertex of $S'$ (i.e., the unique vertex of the two-edge path $S'$ that is adjacent to the other two vertices of $S'$), then $W \smallsetminus x$ is a chordless cycle, and $x$ has four neighbors in this cycle. Thus, long rich squares are not wheel-free (and consequently, they are not \{ISK4,wheel\}-free). This observation allows us to strengthen Theorem~\ref{decomp-ISK4WheelFree-graph} as follows.

\begin{theorem} \label{decomp-ISK4WheelFree-graph-strong} Let $G$ be an \{ISK4,wheel\}-free graph. Then at least one of the following holds:
\begin{itemize}
\item $G$ is series-parallel;
\item $G$ is the line graph of a chordless graph with maximum degree at most three;
\item $G$ is a complete bipartite graph;
\item $G$ has a clique-cutset or a proper 2-cutset.
\end{itemize}
\end{theorem}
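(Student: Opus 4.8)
The plan is to derive Theorem~\ref{decomp-ISK4WheelFree-graph-strong} directly from Theorem~\ref{decomp-ISK4WheelFree-graph} by showing that the fourth outcome (that $G$ is a long rich square) can be absorbed into the remaining outcomes. Since the excerpt has already established, in the paragraph preceding the statement, that every long rich square contains an induced wheel, the real content of the argument is simply to observe that this outcome never arises for an \{ISK4,wheel\}-free graph, and hence one of the other four outcomes of Theorem~\ref{decomp-ISK4WheelFree-graph} must hold.

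\begin{proof}[Proof of Theorem~\ref{decomp-ISK4WheelFree-graph-strong}]
Let $G$ be an \{ISK4,wheel\}-free graph. By Theorem~\ref{decomp-ISK4WheelFree-graph}, at least one of the five listed outcomes holds: $G$ is series-parallel, $G$ is the line graph of a chordless graph with maximum degree at most three, $G$ is a complete bipartite graph, $G$ is a long rich square, or $G$ has a clique-cutset or a proper 2-cutset. I claim that the fourth outcome is impossible. Indeed, as shown in the discussion preceding this theorem, if $G$ were a long rich square with central square $S$, then choosing any two-edge subpath $S'$ of $S$ and any two components $P_1,P_2$ of $G \smallsetminus V(S)$---which, by the definition of a long rich square, exist and are long links of $S$ in $G$---would yield an induced wheel $W = G[V(S') \cup V(P_1) \cup V(P_2)]$, whose rim is $W \smallsetminus x$ and whose hub is the central vertex $x$ of $S'$ (with four neighbors on the rim). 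This contradicts the assumption that $G$ is wheel-free. Hence the fourth outcome cannot occur, and so one of the remaining four outcomes must hold, which are exactly the four outcomes in the statement of Theorem~\ref{decomp-ISK4WheelFree-graph-strong}.
\end{proof}

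The argument is essentially immediate, and I expect no genuine obstacle: the only point requiring care is verifying that the subgraph $W$ exhibited above really is an induced wheel, i.e., that the two long links $P_1,P_2$ together with the two-edge path $S'$ induce a chordless cycle and that $x$ has exactly the claimed neighbors on it. This is precisely what the preceding paragraph of the excerpt checks, using the defining properties of long links (their attachments over $S$ are $\{a_i,a_{i+1}\}$ and $\{a_{i+2},a_{i+3}\}$, with no interior vertex attached to $S$), so I would simply cite that discussion rather than repeat the verification.
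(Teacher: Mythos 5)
Your proposal is correct and is essentially identical to the paper's own justification: the paper proves Theorem~\ref{decomp-ISK4WheelFree-graph-strong} precisely by invoking Theorem~\ref{decomp-ISK4WheelFree-graph} and observing, in the preceding paragraph, that any long rich square contains an induced wheel (the cycle $W \smallsetminus x$ with hub $x$ having four neighbors on it), so that outcome cannot occur for a wheel-free graph. Nothing is missing.
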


Our goal in this manuscript is to prove a trigraph version of Theorem~\ref{decomp-ISK4WheelFree-graph-strong}. In Section~\ref{s:cyc}, we state a few lemmas about ``cyclically 3-connected graphs'' proven in~\cite{MR2927414}. In Section~\ref{s:trigraphs}, we define trigraphs and introduce some basic trigraph terminology. Finally, in Section~\ref{sec:dec}, we prove our decomposition theorem for \{ISK4,wheel\}-free trigraphs (Theorem~\ref{thm-decomp}), which is very similar to Theorem~\ref{decomp-ISK4WheelFree-graph-strong}. We prove Theorem~\ref{thm-decomp} by imitating the proof of the decomposition theorem for ISK4-free graphs from~\cite{MR2927414}. Interestingly, the fact that we work with trigraphs rather than graphs does not substantially complicate the proof. On the other hand, the fact that we restrict ourselves to the wheel-free case significantly simplifies the argument (indeed, some of the most difficult parts of the proof of the theorem for ISK4-free graphs from~\cite{MR2927414} involve ISK4-free graphs that contain induced wheels).

\section{Cyclically 3-connected graphs}
\label{s:cyc}

In this section, we state a few lemmas proven in~\cite{MR2927414}, but first, we need some definitions. Given a graph $G$, a vertex $u \in V(G)$, and a set $X \subseteq V(G) \smallsetminus \{u\}$, we say that $u$ is {\em complete} (respectively: {\em anti-complete}) to $X$ in $G$ provided that $u$ is adjacent (respectively: non-adjacent) to every vertex of $X$ in $G$. Given a graph $G$ and disjoint sets $X,Y \subseteq V(G)$, we say that $X$ is {\em complete} (respectively: {\em anti-complete}) to $Y$ in $G$ provided that every vertex of $X$ is complete (respectively: anti-complete) to $Y$ in $G$. A {\em separation} of a graph $H$ is a pair $(A,B)$ of subsets of $V(H)$ such that $V(H) = A \cup B$, and $A \smallsetminus B$ is anti-complete to $B \smallsetminus A$. A separation $(A,B)$ of $H$ is {\em proper} if both $A \smallsetminus B$ and $B \smallsetminus A$ are non-empty. A {\em $k$-separation} of $H$ is a separation $(A,B)$ of $H$ such that $|A \cap B| \leq k$. A separation $(A,B)$ is {\em cyclic} if both $H[A]$ and $H[B]$ have cycles. A graph $H$ is {\em cyclically 3-connected} if it is 2-connected, is not a cycle, and admits no cyclic 2-separation. Note that a cyclic 2-separation of any graph is proper. A {\em theta} is any subdivision of the complete bipartite graph $K_{2,3}$. As usual, if $H_1$ and $H_2$ are graphs, we denote by $H_1 \cup H_2$ the graph whose vertex set is $V(H_1) \cup V(H_2)$ and whose edge set is $E(H_1) \cup E(H_2)$.

The {\em length} of a path is the number of edges that it contains. A {\em branch vertex} in a graph $G$ is a vertex of degree at least three. A {\em branch} in a graph $G$ is an induced path $P$ of length at least one whose endpoints are branch vertices of $G$ and all of whose interior vertices are of degree two in $G$.

We now state the lemmas from~\cite{MR2927414} that we need. The five lemmas below are Lemmas 4.3, 4.5, 4.6, 4.7, and 4.8 from~\cite{MR2927414}, respectively. 

\begin{lemma}\label{lemma43}\cite{MR2927414} Let $H$ be a cyclically 3-connected graph, let $a$ and $b$ be two branch vertices of $H$, and let $P_1$, $P_2$, and $P_3$ be three induced paths of $H$ whose ends are $a$ and $b$. Then one of the following holds:
\begin{itemize}
\item $P_1$, $P_2$, $P_3$ are branches of $H$ of length at least two and $H = P_1 \cup P_2 \cup P_3$ (so $H$ is a theta);
\item there exist distinct indices $i,j \in \{1,2,3\}$ and a path $S$ of $H$ with one end in the interior of $P_i$ and the other end in the interior of $P_j$,  such that no interior vertex of $S$ belongs to $V(P_1 \cup P_2 \cup P_3)$, and such that $P_1 \cup P_2 \cup P_3 \cup S$ is a subdivision of $K_4$.
\end{itemize}
\end{lemma}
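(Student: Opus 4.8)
The plan is to work with the subgraph $R = P_1 \cup P_2 \cup P_3$ of $H$ and to dichotomize on whether $R$ is all of $H$. The essential case is the one where $P_1,P_2,P_3$ are pairwise internally disjoint, so that $R$ is a theta with poles $a$ and $b$; I would treat this case in detail and then indicate how the degenerate possibilities (a path of length one, or two paths sharing an interior vertex) reduce to it. So suppose first that $P_1,P_2,P_3$ are internally disjoint and each has length at least two.

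First I would record a rigidity property: since each $P_i$ is induced and has $a,b$ as its \emph{endpoints}, no edge of $H$ can join a pole to an interior vertex of any $P_i$ (that would be a chord of $P_i$), and $ab \notin E(H)$ (it would be a chord of every $P_i$). Hence every edge of $H$ with both ends in $V(R)$ that is not an edge of $R$ must join an interior vertex of some $P_i$ to an interior vertex of a distinct $P_j$. Any such chord is already a legitimate $S$ (a path of length one), and $P_1 \cup P_2 \cup P_3 \cup S$ is then a subdivision of $K_4$ with branch vertices $a,b$ and the two ends of $S$. So I may assume $R$ is an induced theta; and if moreover $H = R$, then $P_1,P_2,P_3$ are branches of $H$ of length at least two and $H$ is a theta, which is the first outcome.

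The crux is the case $H \neq R$ with $R$ an induced theta. I would pick a component $F$ of $H \setminus V(R)$ and let $N \subseteq V(R)$ be its attachment; since $H$ is $2$-connected, $|N| \ge 2$. If $N$ meets the interiors of two distinct paths $P_i,P_j$, I choose $x \in N \cap \mathrm{int}(P_i)$ and $y \in N \cap \mathrm{int}(P_j)$ and take $S$ to be an $x$–$y$ path with interior in $F$; then $P_1 \cup P_2 \cup P_3 \cup S$ is a subdivision of $K_4$ as above and we are done. Otherwise $N \subseteq \{a,b\} \cup \mathrm{int}(P_i)$ for a single index $i$, and I claim this contradicts cyclic $3$-connectivity. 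Let $\mathcal{F}$ collect all components of $H \setminus V(R)$ whose attachment lies in $\{a,b\} \cup \mathrm{int}(P_i)$ (under the current assumption every component touching $\mathrm{int}(P_i)$ lies in $\mathcal{F}$, else it meets two interiors and we are already done). Put $A = V(P_i) \cup \bigcup_{F' \in \mathcal{F}} V(F')$ and $B = V(H) \setminus \big(\mathrm{int}(P_i) \cup \bigcup_{F' \in \mathcal{F}} V(F')\big)$. Then $A \cap B = \{a,b\}$, and $A \setminus B$ is anti-complete to $B \setminus A$: interiors of distinct paths are non-adjacent since $R$ is induced, distinct components are non-adjacent, no component of $\mathcal{F}$ reaches $\mathrm{int}(P_j) \cup \mathrm{int}(P_k)$, and no component outside $\mathcal{F}$ reaches $\mathrm{int}(P_i)$. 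Thus $(A,B)$ is a $2$-separation, and it is cyclic, since $H[A]$ contains a cycle formed by $F$ (which attaches to at least two vertices of $V(P_i)$) together with a subpath of $P_i$, while $H[B]$ contains the cycle $P_j \cup P_k$ through $a$ and $b$. This contradicts the hypothesis and completes the internally disjoint case.

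The step I expect to be the main obstacle is the reduction to internally disjoint paths. The degenerate case in which some path, say $P_3$, is the single edge $ab$ is mild: only $P_1,P_2$ have interiors, so $S$ must join $\mathrm{int}(P_1)$ to $\mathrm{int}(P_2)$, and the separation argument above applies verbatim with the edge $ab$ supplying the cycle on the side containing $P_2$ (so the first outcome, which requires length at least two, is correctly excluded). The genuinely delicate point is when two given paths share an interior vertex: then $R$ is not a theta on $\{a,b\}$ but a ``generalized theta'' whose branch vertices sit at the splitting and merging points, and $b$ (or $a$) may fail to be a branch vertex of $R$ though it is one of $H$. Here I would isolate an innermost pair of splitting/merging vertices to extract three genuinely internally disjoint paths, and then use the fact that $a$ and $b$ are branch vertices of $H$ (forcing edges outside $R$) together with cyclic $3$-connectivity to rerun the component/separation analysis. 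Checking in this situation that the final union is \emph{exactly} a subdivision of $K_4$ — in particular that a shared vertex does not become a branch vertex of degree four — is the part demanding the most care, and I expect the bulk of the bookkeeping to lie there.
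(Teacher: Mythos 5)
The paper does not prove this lemma at all: it is quoted from \cite{MR2927414} (Lemma~4.3 there), so there is no in-paper proof to compare against, and I judge your argument on its own. Your treatment of the main case, where $P_1,P_2,P_3$ pairwise meet only in $a$ and $b$, is correct and complete, and it is essentially the standard argument: a chord of $R=P_1\cup P_2\cup P_3$ immediately supplies $S$ of length one; if $R$ is an induced theta and $H=R$ you get the first outcome; otherwise every component of $H\smallsetminus V(R)$ attaches to at least two vertices of $R$ by $2$-connectedness, a component meeting the interiors of two distinct paths supplies $S$, and a component whose attachment lies in $\{a,b\}\cup \mathrm{int}(P_i)$ yields the cyclic $2$-separation through $\{a,b\}$ that you construct, contradicting cyclic $3$-connectedness. (Your length-one worry is vacuous rather than ``mild'': if some $P_i$ were the single edge $ab$, that edge would be a chord of any other induced $a$--$b$ path of length at least two, so with three distinct induced paths this configuration cannot occur.)

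The case you leave as a sketch --- two of the given paths sharing an interior vertex --- is genuinely not handled, and your proposed repair (extract internally disjoint subpaths and rerun the analysis) cannot succeed, because the conclusion requires $P_1\cup P_2\cup P_3\cup S$ \emph{itself} to be a subdivision of $K_4$: once the union of the given paths contains a vertex of degree four, no choice of $S$ can remove it. Indeed, under this literal reading the lemma is false. Let $V(H)=\{a,b,w,x,x',y,y',z\}$ with edge set $\{ax,xw,wy,yb,\,ax',x'w,wy',y'b,\,xy',x'y,\,az,zb\}$. Then $a$ and $b$ have degree three, the paths $P_1=a-x-w-y-b$, $P_2=a-x'-w-y'-b$, $P_3=a-z-b$ are induced, and $H$ is cyclically $3$-connected (it is $2$-connected, is not a cycle, and its only $2$-cutset is $\{a,b\}$, which cuts off the single vertex $z$, whose side induces a path and hence no cycle); yet $w$ has degree four in $P_1\cup P_2$, so neither outcome of the lemma holds. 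The correct resolution is therefore not a further argument but the (implicit, and standard in \cite{MR2927414}) hypothesis that the three paths pairwise share only $a$ and $b$ --- which is exactly how the lemma is invoked in this paper, namely on the three constituent paths of the theta $Z\cup R$ in the proof of Proposition~\ref{prop-dist-flat-branch}. Under that reading, your proof is complete; the ``delicate point'' you identified is a missing hypothesis, not a missing idea.
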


\begin{lemma}\label{lemma45}\cite{MR2927414} A graph is cyclically 3-connected if and only if it is either a theta or a subdivision of a 3-connected graph.
\end{lemma}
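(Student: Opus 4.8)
The plan is to route both implications through the multigraph $\widetilde{H}$ obtained from $H$ by \emph{suppressing} all degree-two vertices, i.e.\ by replacing each branch of $H$ by a single edge joining its endpoints. Since $H$ is $2$-connected and not a cycle, $\widetilde{H}$ is well defined, has at least two vertices, every vertex of $\widetilde{H}$ has degree at least three, and $H$ is a subdivision of $\widetilde{H}$; moreover cyclic separations of $H$ correspond to vertex cuts of $\widetilde{H}$ whose two sides each carry a cycle. The technical backbone I would set up first is the following counting observation: \emph{if $\widetilde{H}$ is a $2$-connected (multi)graph of minimum degree at least three and $\{x,y\}$ is any two-element vertex set, then every component $C$ of $\widetilde{H}-\{x,y\}$ satisfies that $\widetilde{H}[V(C)\cup\{x,y\}]$ contains a cycle.} This follows because each vertex of $C$ has all its neighbors inside $V(C)\cup\{x,y\}$, so it has degree at least three there; a vertex-count then shows $\widetilde{H}[V(C)\cup\{x,y\}]$ cannot be a forest. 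This is exactly what turns ``there is a small cut'' into ``there is a \emph{cyclic} small separation.''

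For the easy implication ($\Leftarrow$), I would check directly that a theta and a subdivision of a $3$-connected graph are $2$-connected and not cycles (they have branch vertices of degree at least three), and then rule out cyclic $2$-separations. For a theta with branch vertices $u_1,u_2$, every cycle passes through both $u_1$ and $u_2$, so if a cyclic $2$-separation $(A,B)$ existed then $\{u_1,u_2\}\subseteq A\cap B$, forcing $A\cap B=\{u_1,u_2\}$; but then the three interior paths are split between $A\setminus B$ and $B\setminus A$, and a cycle on each side would require at least four paths, a contradiction. For $H$ a subdivision of a $3$-connected graph $G$, a cyclic $2$-separation of $H$ would project to a two-element vertex cut of $G$ separating two sides that each still contain a cycle, contradicting the fact that a $3$-connected graph on at least four vertices has no cut of size two.

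For the main implication ($\Rightarrow$), I would argue on $\widetilde{H}$. First, $\widetilde{H}$ has no loops: a loop at $v$ would be a cycle through $v$ whose internal vertices have degree two in $H$, making $v$ a cut vertex and contradicting $2$-connectivity. Then I split on $|V(\widetilde{H})|$. If $|V(\widetilde{H})|=2$, say with branch vertices $a,b$, then there are at least three parallel edges (minimum degree three); if there were four, or if one branch had length one in $H$, grouping the branches into two sides $A,B$ with $A\cap B=\{a,b\}$ yields a cyclic $2$-separation (the length-one case is exactly $K_4$ minus an edge, which has such a separation), so there are exactly three branches each of length at least two, i.e.\ $H$ is a theta. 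If $|V(\widetilde{H})|\ge 3$, I would show $\widetilde{H}$ is simple and $3$-connected: two parallel edges between $u,v$, or a two-element vertex cut $\{x,y\}$ of $\widetilde{H}$, produce (via the counting observation above, which guarantees a cycle on each side) a cyclic $2$-separation of $H$, contradicting cyclic $3$-connectivity. Hence $\widetilde{H}$ is a simple $3$-connected graph and $H$ is a subdivision of it.

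The hard part will be the forward direction, and within it specifically the bookkeeping that converts a small cut of $\widetilde{H}$ into a genuine cyclic $2$-separation of $H$ while keeping the cut a stable set of size exactly two and both sides nonempty. The cycle-existence on both sides is precisely where the minimum-degree-three property of $\widetilde{H}$ is essential, and getting the counting argument clean (including the small-component cases $|V(C)|\in\{1,2\}$, where one must use parallel edges to produce short cycles) is the main obstacle. A secondary subtlety is ``sliding'' a cut vertex of $H$ that lies in the interior of a branch to a branch vertex of $\widetilde{H}$ so that the correspondence between separations of $H$ and of $\widetilde{H}$ is exact; this is routine but must be stated carefully to ensure no cyclic separation is lost in the translation.
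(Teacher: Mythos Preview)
The paper does not prove Lemma~\ref{lemma45}; it is quoted verbatim from~\cite{MR2927414} (their Lemma~4.5) and used as a black box, so there is no in-paper argument to compare against.

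Your plan via the suppressed multigraph $\widetilde{H}$ is the standard route and is sound. Two places deserve tightening. First, your ``counting observation'' is stated for components of $\widetilde{H}-\{x,y\}$ and handles a genuine $2$-cut of $\widetilde{H}$ cleanly; but in the parallel-edge case you are not deleting $u,v$, you need a cycle of $\widetilde{H}$ that \emph{avoids the two specified $uv$-edges}, and your observation as phrased could return the $2$-cycle on those very edges. The fix is immediate---delete the two edges instead and redo the degree count (the remaining multigraph has degree sum at least $3(|V(\widetilde{H})|-2)+2$, too large for a forest once $|V(\widetilde{H})|\ge 3$)---and you already flag exactly this bookkeeping as the delicate part, so this is a wording issue rather than a real gap. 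Second, in the backward direction for $H$ a subdivision of a $3$-connected $G$, ``projecting'' the cut to $G$ is slippery when $x$ or $y$ is a subdivision vertex. A cleaner finish: any cycle of $H$ contracts to a cycle of the simple graph $G$ and hence contains at least three branch vertices, so each side of a putative cyclic $2$-separation contains a branch vertex outside $\{x,y\}$; now three internally disjoint paths in $G$ between such branch vertices lift to three internally disjoint paths in $H$, which cannot all meet a $2$-element set.
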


\begin{lemma}\label{lemma46}\cite{MR2927414} Let $H$ be a cyclically 3-connected graph, and let $a$ and $b$ be two distinct vertices of $H$. If no branch of $H$ contains both $a$ and $b$, then $H' = (V(H), E(H) \cup \{ab\})$ is a cyclically 3-connected graph and every graph obtained from $H'$ by subdividing ab is cyclically 3-connected.
\end{lemma}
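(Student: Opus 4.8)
The plan is to verify the three defining conditions of cyclic 3-connectivity for $H'$ directly, and then to deduce the statement about subdivisions as a quick consequence of Lemma~\ref{lemma45}. That $H'$ is 2-connected is immediate, since adding an edge to a 2-connected graph cannot destroy 2-connectivity. Moreover $H$ is not a cycle and is 2-connected, so it has a vertex of degree at least three, and that vertex still has degree at least three in $H'$; hence $H'$ is not a cycle either. The whole difficulty is therefore to show that $H'$ admits no cyclic 2-separation, and this is the only place where the hypothesis that no branch of $H$ contains both $a$ and $b$ is used.

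So I would suppose, for contradiction, that $(A,B)$ is a cyclic 2-separation of $H'$. The key first observation is that $(A,B)$ is also a 2-separation of $H$: the vertex set is unchanged, and since $E(H) \subseteq E(H')$, the set $A \smallsetminus B$ remains anti-complete to $B \smallsetminus A$ in $H$. Because $H$ is cyclically 3-connected, $(A,B)$ is not a \emph{cyclic} 2-separation of $H$, so at least one of $H[A]$, $H[B]$ has no cycle; say $H[A]$ is a forest. Now $H'[A]$ does contain a cycle, and since the only edge of $H'$ that is not an edge of $H$ is $ab$, that cycle must use $ab$. Hence $a,b \in A$, and $H[A]$ contains a path from $a$ to $b$.

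Next I would pin down the structure of $H[A]$. Since $H'[A]$ contains a cycle we have $|A| \geq 3$, so $A \smallsetminus B \neq \emptyset$ (as $|A \cap B| \leq 2$), and symmetrically $B \smallsetminus A \neq \emptyset$. As $H$ is 2-connected, the cutset $A \cap B$ separating these two nonempty sets cannot have size less than two, so $A \cap B = \{k_1,k_2\}$ with $k_1 \neq k_2$. Applying Menger's theorem in the 2-connected graph $H$, every vertex of $A \smallsetminus B$ is joined to both $k_1$ and $k_2$ by internally disjoint paths whose initial segments lie inside $A$; in particular the forest $H[A]$ is connected and hence a tree, and each of its vertices lies on the unique $k_1$--$k_2$ path of that tree. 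Thus $H[A]$ is an induced path from $k_1$ to $k_2$, and every interior vertex of this path, having both of its neighbors on the path and none in $B \smallsetminus A$, has degree exactly two in $H$. Since $a$ and $b$ both lie on this path and every vertex strictly between them lies in $A \smallsetminus B$ and so has degree two in $H$, extending the sub-path between $a$ and $b$ through degree-two vertices at each end until it reaches branch vertices (which exist because $H$, being cyclically 3-connected, is not a cycle) produces a branch of $H$ containing both $a$ and $b$, contradicting the hypothesis. This contradiction shows that $H'$ has no cyclic 2-separation, so $H'$ is cyclically 3-connected.

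For the final assertion, let $H''$ be any graph obtained from $H'$ by subdividing $ab$. By Lemma~\ref{lemma45}, the cyclically 3-connected graph $H'$ is either a theta or a subdivision of a 3-connected graph. Subdividing the edge $ab$ turns a theta into a theta, and a subdivision of a 3-connected graph $G_0$ into another subdivision of the same $G_0$; in either case $H''$ is again a theta or a subdivision of a 3-connected graph, hence cyclically 3-connected by Lemma~\ref{lemma45}. The main obstacle in the whole argument is the third paragraph: identifying the acyclic side of the separation as an induced path between the two cut vertices all of whose interior vertices have degree two, which is exactly what locates the forbidden branch.
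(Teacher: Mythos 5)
The paper does not actually prove this lemma: it is Lemma 4.6 of \cite{MR2927414}, imported verbatim in Section~\ref{s:cyc}, so there is no in-paper proof to compare yours against. Judged on its own merits, your argument is essentially correct and self-contained. The reduction of a cyclic $2$-separation $(A,B)$ of $H'$ to a $2$-separation of $H$ with one acyclic side, the identification of that side as an induced $k_1$--$k_2$ path via the fan version of Menger's theorem (forcing $a,b\in A$ and all interior vertices of $H[A]$ to have degree two in $H$), and the derivation of the statement about subdivisions from Lemma~\ref{lemma45} are all sound; the last of these is a particularly clean way to dispatch the second assertion without re-running the separation analysis on the subdivided graph.

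One step deserves more care than you give it. At the end you extend the $a$--$b$ subpath through degree-two vertices until reaching branch vertices and declare the result a branch, but a branch in this paper is by definition an \emph{induced} path. Since the interior vertices of your extended path have degree exactly two, the only possible chord joins its two endpoints $x$ and $y$; if $xy\in E(H)$ the extended path is not a branch and the stated contradiction does not yet follow. (Note also that the extension may leave $A$ if $k_1$ or $k_2$ has degree two in $H$, so you cannot simply inherit inducedness from $H[A]$.) The repair is short: if $xy\in E(H)$, let $A_0$ be the vertex set of the extended path and $B_0=V(H)$ minus its interior; then $H[A_0]$ has a cycle through the chord $xy$, and $H[B_0]$ has at least as many edges as vertices (because $H$ is $2$-connected with a vertex of degree at least three, so $|E(H)|\ge |V(H)|+1$, and deleting the $k$ interior vertices removes exactly the $k+1$ path edges), hence also a cycle; this makes $(A_0,B_0)$ a cyclic $2$-separation of $H$, which is absurd. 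With that one patch the proof is complete.
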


\begin{lemma}\label{lemma47}\cite{MR2927414} Let $H$ be a cyclically 3-connected graph, let $Z$ be a cycle of $H$, and let $a$, $b$, $c$, $d$ be four pairwise distinct vertices of $Z$ that lie in this order on $Z$ and satisfy $ab,cd \in E(Z)$. Let $P$ be the subpath of $Z$ from $a$ to $d$ that does not contain $b$ and $c$, and let $Q$ be the subpath of $Z$ from $b$ to $c$ that does not contain $a$ and $d$. Suppose that there exist distinct branches $F_{ab}$ and $F_{cd}$ of $H$ such that $ab \in E(F_{ab})$ and $cd \in E(F_{cd})$. Then there is a path $R$ of $H$ such that one endpoint of $R$ belongs to $P$ and the other to $Q$, such that no interior vertex of $R$ belongs to $Z$, and such that $R$ is not from $a$ to $b$ or from $c$ to $d$.
\end{lemma}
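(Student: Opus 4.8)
The plan is to argue by contradiction: suppose that $H$ is cyclically 3-connected, that $F_{ab}$ and $F_{cd}$ are distinct branches with $ab \in E(F_{ab})$ and $cd \in E(F_{cd})$, and that no path $R$ as described exists; I will then exhibit a cyclic $2$-separation of $H$, contradicting cyclic $3$-connectivity. The first step is to record what the non-existence of $R$ says structurally. Write $V(Z) = V(P) \cup V(Q)$, where the two arcs meet on $Z$ only through the edges $ab$ and $cd$. Call a path with one end in $V(P)$, the other in $V(Q)$, and no interior vertex on $Z$ a \emph{crossing path}; the edges $ab$ and $cd$ are crossing paths, and by hypothesis these are the only admissible ones whose ends are $\{a,b\}$ or $\{c,d\}$, while every other crossing path is forbidden. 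In particular, there is no chord of $Z$ joining $V(P)\smallsetminus\{a,d\}$ to $V(Q)$ (such a chord would be a crossing path avoiding the ends $a,d$, hence a valid $R$), and no component of $H \smallsetminus V(Z)$ can have neighbors both in $V(P)\smallsetminus\{a,d\}$ and in $V(Q)$ (a path through such a component would again be a valid $R$). Since $H$ is $2$-connected, every component of $H \smallsetminus V(Z)$ has at least two neighbors on $Z$, and by the previous observation each such component attaches either only within the closed $P$-arc $\{a\}\cup P^\circ\cup\{d\}$ or only within the closed $Q$-arc $\{b\}\cup Q^\circ\cup\{c\}$.

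Next I would build the separation, the natural choice being the cut $\{a,d\}$: put the interior $P^\circ$ of $P$ together with all components attached to it into $A$, put all of $Q$ together with all components attached to it into $B$, and place the remaining components, which attach only to $\{a,d\}$, into $A$. The structural restrictions above guarantee that $A\smallsetminus B$ and $B\smallsetminus A$ are anti-complete, so $(A,B)$ is a $2$-separation with $A\cap B=\{a,d\}$; it is proper because $Q$ contains $b$ and, using the branch hypothesis, the $P$-side is non-trivial. It remains to check that both sides carry a cycle, and this is exactly where $F_{ab}\neq F_{cd}$ is used: this inequality is equivalent to the statement that neither the arc $a,b,Q^\circ,c,d$ nor the arc $b,a,P^\circ,d,c$ is contained in a single branch, i.e.\ some vertex of the closed $Q$-arc \emph{and} some vertex of the closed $P$-arc has degree at least three in $H$. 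A degree-$\geq 3$ vertex in the interior $Q^\circ$ has a neighbor off the arc which, by the crossing restriction, must stay on the $Q$-side, and together with $2$-connectivity this neighbor lies on a cycle inside $H[B]$; symmetrically one produces a cycle inside $H[A]$. Thus $(A,B)$ is a cyclic $2$-separation, the desired contradiction.

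The main obstacle is the case where the only high-degree vertices on a closed arc sit at the \emph{corners} $a,b,c,d$ rather than in the open arcs $P^\circ,Q^\circ$. For instance, if $\deg_H(a)\geq 3$ while every interior vertex of $P$ has degree two, then the extra edge at $a$ may run into a component that attaches to the $Q$-side at $b$, so the cycle it creates lies on the $B$-side and the cut $\{a,d\}$ fails to make $H[A]$ cyclic. In such situations one must instead cut at the two ends of the offending crossing edge—for example $\{a,b\}$—so that the small ``bump'' through the corner becomes one cyclic side while the remainder of $Z$ supplies the cycle on the other side; a small example shows that such graphs really do admit a cyclic $2$-separation of this shifted form, confirming that the contradiction still arises. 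Verifying that the shifted cut is again a proper, cyclic $2$-separation, organizing the corner cases so that they are exhaustive, keeping track of how the components of $H\smallsetminus V(Z)$ are distributed across the cut, and repeatedly converting a single ``extra'' edge into a genuine cycle via $2$-connectivity, are the technical points that will need the most care.
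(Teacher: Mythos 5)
First, a remark on the comparison you were asked for: this manuscript does not prove Lemma~\ref{lemma47} at all. It is quoted, with a citation, from~\cite{MR2927414} (where it is Lemma~4.7), so there is no in-paper proof to measure your attempt against. Judged on its own merits, your overall strategy --- assume no valid $R$ exists and manufacture a cyclic 2-separation, contradicting cyclic 3-connectivity --- is the right one and can be pushed through to a complete proof.

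Two points in your write-up need repair. First, your classification of the components of $H \smallsetminus V(Z)$ is misstated: it is \emph{not} true that each such component attaches only within the closed $P$-arc or only within the closed $Q$-arc. A component whose attachment is exactly $\{a,b\}$ (or exactly $\{c,d\}$) violates this, and these are precisely the components causing the ``main obstacle'' you describe afterwards; the correct statement is that every component has attachment contained in $V(P)$, or contained in $V(Q)$, or equal to $\{a,b\}$, or equal to $\{c,d\}$, since any attachment meeting both arcs other than in exactly $\{a,b\}$ or $\{c,d\}$ yields a valid $R$. Second, the corner case is left as ``a small example shows\dots'', which is not an argument. The clean way to finish is to dispose of the exceptional components first: if some component $C$ has attachment exactly $\{a,b\}$, then $(V(C) \cup \{a,b\},\, V(H) \smallsetminus V(C))$ is already a proper 2-separation, cyclic because $H[V(C) \cup \{a,b\}]$ contains a cycle through $C$ and the edge $ab$ while the other side contains all of $Z$; symmetrically for $\{c,d\}$. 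Once these are excluded, every remaining component is a $P$-component or a $Q$-component, and your $\{a,d\}$-cut goes through: $F_{ab} \ne F_{cd}$ forces a vertex of degree at least three on each of $V(P)$ and $V(Q)$ (otherwise the branch through $ab$ would run along the whole arc and absorb $cd$), and the extra edge at such a vertex must now close a cycle on its own side, since all crossing chords and mixed components have been ruled out. One must also check properness of this cut when $P$ is the single edge $ad$ and there are no $P$-components, but in that case no vertex of $V(P)$ can have degree at least three, contradicting $F_{ab} \ne F_{cd}$. With these repairs your proof is correct.
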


\begin{lemma}\label{lemma48}\cite{MR2927414} Let $H$ be a subdivision of a 3-connected graph. Let $C$ be a cycle of $H$ and $e$ an edge of $H$ such that $C$ and $e$ are edgewise disjoint. Then some subgraph of $H$ that contains $C$ and $e$ is a subdivision of $K_4$.
\end{lemma}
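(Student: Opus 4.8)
The plan is to reduce to the case in which $H$ is itself $3$-connected and then to construct the required subdivision of $K_4$ explicitly, using Menger's theorem. For the reduction, realize $H$ as a subdivision of a $3$-connected graph $G$: the branch vertices of $H$ are exactly the images of $V(G)$ (each vertex of $G$ has degree at least three, as $G$ is $3$-connected), every other vertex of $H$ has degree two, and each edge $ab\in E(G)$ gives a branch $B_{ab}$ of $H$. Because every non-branch vertex of $H$ has degree two, the cycle $C$ traverses whole branches and hence projects to a cycle $C'$ of $G$, while $e$ lies on a unique branch $B_{uv}$ and projects to the edge $uv$ of $G$; the hypothesis $e\notin E(C)$ forces $uv\notin E(C')$, for otherwise $B_{uv}\subseteq C$ would contain $e$. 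Thus it is enough to find a subgraph $F'$ of $G$ that is a subdivision of $K_4$ and contains both $C'$ and $uv$: lifting each edge $ab\in E(F')$ back to its branch $B_{ab}$ produces a subgraph $F$ of $H$ that is a subdivision of $F'$ (distinct branches being internally disjoint), hence a subdivision of $K_4$, and $F$ contains $C$ and $e$. So I may assume $H=G$ is $3$-connected, with cycle $C$ and edge $e=uv\notin E(C)$, and I would split into three cases according to $|\{u,v\}\cap V(C)|$.

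If both $u,v\in V(C)$, then $e$ is a chord splitting $C$ into two arcs $A_1,A_2$ from $u$ to $v$, each of length at least two and so with nonempty interior. Since $G\smallsetminus\{u,v\}$ is connected ($G$ being $3$-connected), a shortest path in $G\smallsetminus\{u,v\}$ between $\mathrm{int}(A_1)$ and $\mathrm{int}(A_2)$ is a handle $R$ with ends $a\in\mathrm{int}(A_1)$ and $b\in\mathrm{int}(A_2)$ and interior disjoint from $C$; then $u,a,v,b$ lie in this cyclic order on $C$, so the internally disjoint chords $uv$ and $ab$ cross and $C\cup\{e\}\cup R$ is a subdivision of $K_4$ containing $C$ and $e$. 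If exactly one end, say $u$, lies on $C$, I take $vu$ as one spoke and, using that $G\smallsetminus u$ is $2$-connected and $|V(C)\smallsetminus\{u\}|\ge 2$, invoke the fan lemma to obtain two more paths from $v$ to $V(C)\smallsetminus\{u\}$ meeting only at $v$, ending at distinct vertices $c_2,c_3$, avoiding $u$, and (after taking them shortest) internally disjoint from $C$; now $v$ is joined by three internally disjoint paths to three distinct vertices of $C$, which cut $C$ into three arcs, so the union is a subdivision of $K_4$ with apex $v$, containing $C$ and $e$.

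The remaining case, in which neither end of $e$ lies on $C$, I would handle in two stages. First, since $G$ has no cut vertex, Menger's theorem yields two disjoint paths from $\{u,v\}$ to $V(C)$; being vertex-disjoint and emanating from a $2$-element set, they start one at $u$ and one at $v$, end at distinct vertices $c,c'$, and may be taken internally disjoint from $C$. Concatenating them with $e$ gives a handle $R$ from $c$ to $c'$ through $e$, internally disjoint from $C$, so $C\cup R$ is a theta with branch vertices $c,c'$. Second, to promote this theta to a subdivision of $K_4$ I need a crossing bridge: as $G\smallsetminus\{c,c'\}$ is connected, a shortest path in it between $\mathrm{int}(R)$ and $V(C)\smallsetminus\{c,c'\}$ has interior disjoint from $C\cup R$ and joins an interior vertex of $R$ to an interior vertex of one arc of $C$; adding it creates a fourth branch vertex, and the result is a subdivision of $K_4$ that keeps both arcs of $C$ (hence contains $C$) and contains $e$, which lies on $R$.

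The routine but essential point throughout is to check that each handle or bridge is genuinely \emph{internally disjoint} from $C$ and from the previously chosen paths and that the chords \emph{cross} in the correct cyclic pattern, so that the final object is a subdivision of $K_4$ and not merely a theta or a subdivision of $K_{2,4}$. I expect the last case to be the main obstacle, since it is the only one needing a two-stage construction — a handle carrying $e$, followed by a crossing bridge — and one must set up the intermediate theta so that $3$-connectivity (here, connectivity of $G\smallsetminus\{c,c'\}$) forces the crossing bridge to exist.
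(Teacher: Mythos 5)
Your argument is correct, but note that this manuscript does not actually prove Lemma~\ref{lemma48}: it is imported verbatim from~\cite{MR2927414} (where it is Lemma~4.8), so there is no in-paper proof to compare against. Your reduction to the underlying $3$-connected graph is sound (a cycle of $H$ must traverse entire branches because subdivision vertices have degree two, so $C$ and $e$ project to a cycle $C'$ and an edge $uv \notin E(C')$ of $G$, and any subdivision of $K_4$ in $G$ containing both lifts back), and the three cases are each handled by a standard connectivity tool: a crossing chord-path through $G \smallsetminus \{u,v\}$ when both ends of $e$ lie on $C$, a $2$-fan in $G \smallsetminus u$ when exactly one does, and Menger plus a crossing bridge through $G \smallsetminus \{c,c'\}$ when neither does. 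The only points that genuinely need the care you flag at the end --- that both arcs of $C$ between $u$ and $v$ have nonempty interior (which holds because $G$ is simple and $uv \notin E(C)$), that shortest paths between the relevant vertex sets are automatically internally disjoint from those sets, and that in the last case the handle $R = P_u \cup e \cup P_v$ is a genuine path because the two Menger paths are vertex-disjoint --- all check out. This is a more elementary, self-contained route than relying on the machinery of cyclically $3$-connected graphs developed in~\cite{MR2927414}, at the cost of a three-way case analysis; for the purposes of this manuscript, which only cites the lemma, either is acceptable.
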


\section{Trigraphs}
\label{s:trigraphs}

Given a set $S$, we denote by ${S \choose 2}$ the set of all subsets of $S$ of size two. A {\em trigraph} is an ordered pair $G = (V(G),\theta_G)$, where $V(G)$ is a finite set, called the {\em vertex set} of $G$ (members of $V(G)$ are called {\em vertices} of $G$), and $\theta_G:{V(G) \choose 2} \rightarrow \{-1,0,1\}$ is a function, called the {\em adjacency function} of $G$. The {\em null} trigraph is the trigraph whose vertex set is empty; a {\em non-null} trigraph is any trigraph whose vertex set is non-empty. If $G$ is a trigraph and $u,v \in V(G)$ are distinct, we usually write $uv$ instead of $\{u,v\}$ (note that this means that $uv = vu$), and furthermore:
\begin{itemize}
\item if $\theta_G(uv) = 1$, we say that $uv$ is a {\em strongly adjacent pair} of $G$, or that $u$ and $v$ are {\em strongly adjacent} in $G$, or that $u$ is {\em strongly adjacent} to $v$ in $G$, or that $v$ is a {\em strong neighbor} of $u$ in $G$, or that $u$ and $v$ are the {\em endpoints of a strongly adjacent pair} of $G$;
\item if $\theta_G(uv) = 0$, we say that $uv$ is a {\em semi-adjacent pair} of $G$, or that $u$ and $v$ are {\em semi-adjacent} in $G$, or that $u$ is {\em semi-adjacent} to $v$ in $G$, or that $v$ is a {\em weak neighbor} of $u$ in $G$, or that $u$ and $v$ are the {\em endpoints of a semi-adjacent pair} of $G$;
\item if $\theta_G(uv) = -1$, we say that $uv$ is a {\em strongly anti-adjacent pair} of $G$, or that $u$ and $v$ are {\em strongly anti-adjacent} in $G$, or that $u$ is {\em strongly anti-adjacent} to $v$ in $G$, or that $v$ is a {\em strong anti-neighbor} of $u$ in $G$, or that $u$ and $v$ are the {\em endpoints of a strongly anti-adjacent pair} of $G$;
\item if $\theta_G(uv) \geq 0$, we say that $uv$ is an {\em adjacent pair} of $G$, or that $u$ and $v$ are {\em adjacent} in $G$, or that $u$ is {\em adjacent} to $v$ in $G$, or that $v$ is a {\em neighbor} of $u$ in $G$, or that $u$ and $v$ are the {\em endpoints of an adjacent pair} of $G$;
\item if $\theta_G(uv) \leq 0$, we say that $uv$ is an {\em anti-adjacent pair} of $G$, or that $u$ and $v$ are {\em anti-adjacent} in $G$, or that $u$ is {\em anti-adjacent} to $v$ in $G$, or that $v$ is an {\em anti-neighbor} of $u$ in $G$, or that $u$ and $v$ are the {\em endpoints of an anti-adjacent pair} of $G$.
\end{itemize}
\noindent
Note that a semi-adjacent pair is simultaneously an adjacent pair and an anti-adjacent pair. One can think of strongly adjacent pairs as ``edges,'' of strongly anti-adjacent pairs as ``non-edges,'' and of semi-adjacent pairs as ``optional edges.'' Clearly, any graph can be thought of as a trigraph: a graph is simply a trigraph with no semi-adjacent pairs, that is, the adjacency function of a graph $G$ is a mapping from ${V(G) \choose 2}$ to the set $\{-1,1\}$.

Given a trigraph $G$, a vertex $u \in V(G)$, and a set $X \subseteq V(G) \smallsetminus \{u\}$, we say that $u$ is {\em complete} (respectively: {\em strongly complete, anti-complete, strongly anti-complete}) to $X$ in $G$ provided that $u$ is adjacent (respectively: strongly adjacent, anti-adjacent, strongly anti-adjacent) to every vertex of $X$ in $G$. Given a trigraph $G$ and disjoint sets $X,Y \subseteq V(G)$, we say that $X$ is {\em complete} (respectively: {\em strongly complete, anti-complete, strongly anti-complete)} to $Y$ in $G$ provided that every vertex of $X$ is complete (respectively: strongly complete, anti-complete, strongly anti-complete) to $Y$ in $G$.

Isomorphism between trigraphs is defined in the natural way. The {\em complement} of a trigraph $G = (V(G),\theta_G)$ is the trigraph $\overline{G} = (V(\overline{G}),\theta_{\overline{G}})$ such that $V(\overline{G}) = V(G)$ and $\theta_{\overline{G}} = -\theta_G$. Thus, $\overline{G}$ is obtained from $G$ by turning all strongly adjacent pairs of $G$ into strongly anti-adjacent pairs, and turning all strongly anti-adjacent pairs of $G$ into strongly adjacent pairs; semi-adjacent pairs of $G$ remain semi-adjacent in $\overline{G}$.

Given trigraphs $G$ and $\widetilde{G}$, we say that $\widetilde{G}$ is a {\em semi-realization} of $G$ provided that $V(\widetilde{G}) = V(G)$ and
for all distinct $u,v \in V(\widetilde{G}) = V(G)$, we have that if $\theta_G(uv) = 1$ then $\theta_{\widetilde{G}}(uv) = 1$, and if $\theta_G(uv) = -1$ then $\theta_{\widetilde{G}}(uv) = -1$. Thus, a semi-realization of a trigraph $G$ is any trigraph that can be obtained from $G$ by ``deciding'' the adjacency of some semi-adjacent pairs of $G$, that is, by possibly turning some semi-adjacent pairs of $G$ into strongly adjacent or strongly anti-adjacent pairs. (In particular, every trigraph is a semi-realization of itself.) A {\em realization} of a trigraph $G$ is a graph that is a semi-realization of $G$. Thus, a realization of a trigraph $G$ is any graph that can be obtained by ``deciding'' the adjacency of all semi-adjacent pairs of $G$, that is, by turning each semi-adjacent pair of $G$ into an edge or a non-edge. Clearly, if a trigraph $G$ has $m$ semi-adjacent pairs, then $G$ has $3^m$ semi-realizations and $2^m$ realizations. The {\em full realization} of a trigraph $G$ is the graph obtained from $G$ by turning all semi-adjacent pairs of $G$ into strongly adjacent pairs (i.e., edges), and the {\em null realization} of $G$ is the graph obtained from $G$ by turning all semi-adjacent pairs of $G$ into strongly anti-adjacent pairs (i.e., non-edges).

A {\em clique} (respectively: {\em strong clique}, {\em stable set}, {\em strongly stable set}) of a trigraph $G$ is a set of pairwise adjacent (respectively: strongly adjacent, anti-adjacent, strongly anti-adjacent) vertices of $G$. Note that any subset of $V(G)$ of size at most one is both a strong clique and a strongly stable set of $G$. Note also that if $S \subseteq V(G)$, then $S$ is a (strong) clique of $G$ if and only if $S$ is a (strongly) stable set of $\overline{G}$. Note furthermore that if $K$ is a strong clique and $S$ is a stable set of $G$, then $|K \cap S| \leq 1$; similarly, if $K$ is a clique and $S$ is a strongly stable set of $G$, then $|K \cap S| \leq 1$. However, if $K$ is a clique and $S$ is a stable set of $G$, then we are only guaranteed that vertices in $K \cap S$ are pairwise semi-adjacent to each other, and it is possible that $|K \cap S| \geq 2$. A {\em triangle} (respectively: {\em strong triangle}) is a clique (respectively: strong clique) of size three.

Given a trigraph $G$ and a set $X \subseteq V(G)$, the {\em subtrigraph of $G$ induced by $X$}, denoted by $G[X]$, is the trigraph with vertex set $X$ and adjacency function $\theta_G \upharpoonright {X \choose 2}$, where for a function $f:A\to B$ and a set $A'\subseteq A$, we denote by $f\upharpoonright  A'$ the restriction of $f$ to $A'$. Given $v_1,\dots,v_p \in V(G)$, we often write $G[v_1,\dots,v_p]$ instead of $G[\{v_1,\dots,v_p\}]$. If $H = G[X]$ for some $X \subseteq V(G)$, we also say that $H$ is an {\em induced subtrigraph} of $G$; when convenient, we relax this definition and say that $H$ is an induced subtrigraph of $G$ provided that there is some set $X \subseteq V(G)$ such that $H$ is isomorphic to $G[X]$. Further, for a trigraph $G$ and a set $X \subseteq V(G)$, we set $G \smallsetminus X = G[V(G) \smallsetminus X]$; for $v \in V(G)$, we often write $G \smallsetminus v$ instead of $G \smallsetminus \{v\}$. The trigraph $G \smallsetminus X$ (respectively: $G \smallsetminus v$) is called the subtrigraph of $G$ obtained by {\em deleting} $X$ (respectively: by {\em deleting} $v$).

If $H$ is a graph, we say that a trigraph $G$ is an {\em $H$-trigraph} if some realization of $G$ is (isomorphic to) $H$. Further, if $H$ is a graph and $G$ a trigraph, we say that $G$ is {\em $H$-free} provided that all realizations of $G$ are $H$-free (equivalently: provided that no induced subtrigraph of $G$ is an $H$-trigraph). If $\mathcal{H}$ is a family of graphs, we say that a trigraph $G$ is {\em $\mathcal{H}$-free} provided that $G$ is $H$-free for all graphs $H \in \mathcal{H}$. In particular, a trigraph is {\em ISK4-free} (respectively: {\em wheel-free}, {\em \{ISK4,wheel\}-free}) if all its realizations are ISK4-free (respectively: wheel-free, \{ISK4,wheel\}-free).

\begin{sloppypar}
Given a graph $H$, a trigraph $G$, vertices $v_1,\dots,v_p \in V(G)$, sets $X_1,\dots,X_q \subseteq V(G)$, and induced subtrigraphs $G_1,\dots,G_r$ of $G$ (with $p,q,r \geq 0$), we say that $v_1,\dots,v_p,X_1,\dots,X_q,G_1,\dots,G_r$ {\em induce an $H$-trigraph in $G$} provided that $G[\{v_1,\dots,v_p\} \cup X_1 \cup \dots \cup X_q \cup V(G_1) \cup \dots V(G_r)]$ is an $H$-trigraph.
\end{sloppypar}

A trigraph is {\em connected} if its full realization is a connected graph. A trigraph is {\em disconnected} if it is not connected. A {\em component} of a non-null trigraph $G$ is any (inclusion-wise) vertex-maximal connected induced subtrigraph of $G$. Clearly, if $H$ is an induced subtrigraph of a non-null trigraph $G$, then we have that $H$ is a component of $G$ if and only if the full realization of $H$ is a component of the full realization of $G$.

A trigraph is a {\em path} if at least one of its realizations is a path. A trigraph is a {\em narrow path} if its full realization is a path. We often denote a path $P$ by $v_0-v_1-\dots-v_k$ (with $k \geq 0$), where $v_0,v_1,\dots,v_k$ are the vertices of $P$ that appear in that order on some realization $\widetilde{P}$ of $P$ such that $\widetilde{P}$ is a path. The {\em endpoints} of a narrow path are the endpoints of its full realization; if $a$ and $b$ are the endpoints of a narrow path $P$, then we also say that $P$ is a {\em narrow $(a,b)$-path}, and that $P$ is a narrow path {\em between} $a$ and $b$. If $P$ is a narrow path and $a,b \in V(P)$, we denote by $a-P-b$ the minimal connected induced subtrigraph of $P$ that contains both $a$ and $b$ (clearly, $a-P-b$ is a narrow path between $a$ and $b$). The {\em interior vertices} of a narrow path are the interior vertices of its full realization. The {\em interior} of a narrow path is the set of all interior vertices of that narrow path. The {\em length} of a narrow path is one less than the number of vertices that it contains. (In other words, the length of a narrow path is the number of edges that its full realization has.) A {\em path} (respectively: {\em narrow path}) in a trigraph $G$ is an induced subtrigraph $P$ of $G$ such that $P$ is a path (respectively: narrow path).

Note that if $G$ is a connected trigraph, then for all vertices $a,b \in V(G)$, there exists a narrow path between $a$ and $b$ in $G$. (To see this, consider the full realization $\widetilde{G}$ of $G$. $\widetilde{G}$ is connected, and so there is a path in $\widetilde{G}$ between $a$ and $b$; let $P$ be a shortest such path in $\widetilde{G}$. The minimality of $P$ guarantees that $P$ is an induced path of $\widetilde{G}$. But now $G[V(P)]$ is a narrow path of $G$ between $a$ and $b$.)

A {\em hole} of a trigraph $G$ is an induced subtrigraph $C$ of $G$ such that some realization of $C$ is a chordless cycle of length at least four. We often denote a hole $C$ of $G$ by $v_0-v_1-\dots-v_{k-1}-v_0$ (with $k \geq 4$ and indices in $\mathbb{Z}_k$), where $v_0,v_1,\dots,v_{k-1}$ are the vertices of $C$ that appear in that order in some realization $\widetilde{C}$ of $C$ such that $\widetilde{C}$ is a chordless cycle of length at least four.

The {\em degree} of a vertex $v$ in a trigraph $G$, denoted by ${\rm deg}_G(v)$, is the number of neighbors that $v$ has in $G$. A {\em branch vertex} in a trigraph $G$ is a vertex of degree at least three. A {\em branch} in a trigraph $G$ is a narrow path $P$ between two distinct branch vertices of $G$ such that no interior vertex of $P$ is a branch vertex. A {\em flat branch} in a trigraph $G$ is a branch $P$ of $G$ such that no adjacent pair of $P$ is contained in a triangle of $G$. (Note that every branch of length at least two is flat.) If $a$ and $b$ are the endpoints of a branch (respectively: flat branch) $P$ of $G$, then we also say that $P$ is an {\em $(a,b)$-branch} (respectively: {\em $(a,b)$-flat branch}) of $G$.

A {\em cutset} of a trigraph $G$ is a (possibly empty) set $C \subseteq V(G)$ such that $G \smallsetminus C$ is disconnected. A {\em cut-partition} of a trigraph $G$ is a partition $(A,B,C)$ of $V(G)$ such that $A$ and $B$ are non-empty ($C$ may possibly be empty), and $A$ is strongly anti-complete to $B$. Note that if $(A,B,C)$ is a cut-partition of $G$, then $C$ is a cutset of $G$. Conversely, every cutset of $G$ induces at least one cut-partition of $G$. A {\em clique-cutset} of a trigraph $G$ is a (possibly empty) strong clique $C$ of $G$ such that $G \smallsetminus C$ is disconnected. A {\em cut-vertex} of a trigraph $G$ is a vertex $v \in V(G)$ such that $G \smallsetminus v$ is disconnected. Note that if $v$ is a cut-vertex of $G$, then $\{v\}$ is a clique-cutset of $G$. A {\em stable 2-cutset} of a trigraph $G$ is cutset of $G$ that is a stable set of size two. We remark that if $C$ is a cutset of a trigraph $G$ such that $|C| \leq 2$, then $C$ is either a clique-cutset or a stable 2-cutset of $G$.

A trigraph is {\em series-parallel} if its full realization is series-parallel (equivalently: if all its realizations are series-parallel).

A {\em complete bipartite trigraph} is a trigraph whose vertex set can be partitioned into two strongly stable sets that are strongly complete to each other. A {\em bipartition} of a complete bipartite trigraph $G$ is a partition $(A,B)$ of $V(G)$ such that $A$ and $B$ are strongly stable sets, strongly complete to each other. A complete bipartite trigraph $G$ is {\em thick} if both sets of its bipartition contain at least three vertices. A trigraph is a {\em strong $K_{3,3}$} if its full realization is a $K_{3,3}$ and it contains no semi-adjacent pairs. Clearly, a strong $K_{3,3}$ is a thick complete bipartite trigraph.

A {\em line trigraph} of a graph $H$ is a trigraph $G$ whose full realization is (isomorphic to) the line graph of $H$, and all of whose triangles are strong. A trigraph $G$ is said to be a {\em line trigraph} provided there is a graph $H$ such that $G$ is a line trigraph of $H$.

\section{A decomposition theorem for \{ISK4,wheel\}-free trigraphs}
\label{sec:dec}

The main result of this manuscript is the following decomposition theorem for \{ISK4,wheel\}-free graphs.

\begin{theorem} \label{thm-decomp} Let $G$ be an \{ISK4,wheel\}-free trigraph. Then $G$ satisfies at least one of the following:
\begin{itemize}
\item $G$ is a series-parallel trigraph;
\item $G$ is a complete bipartite trigraph;
\item $G$ is a line trigraph;
\item $G$ admits a clique-cutset;
\item $G$ admits a stable 2-cutset. 
\end{itemize}
\end{theorem}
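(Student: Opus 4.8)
The plan is to reduce the trigraph statement to the known graph-level result, Theorem~\ref{decomp-ISK4WheelFree-graph-strong}, by passing to a suitable realization of $G$ and then ``lifting'' the outcome back to the trigraph. Concretely, I would first handle the degenerate cases: if $G$ admits a clique-cutset or a stable $2$-cutset, we are done immediately, so we may assume $G$ has none. Under this assumption I want to show that the adjacency of every semi-adjacent pair is essentially forced, so that $G$ is ``close to'' a graph. The natural candidate realization to study is the full realization $\widetilde{G}$, since series-parallel, complete bipartite, and line trigraphs are all defined in terms of the full realization (and are preserved by the $\{$ISK4,wheel$\}$-free hypothesis, which asserts that \emph{all} realizations, in particular $\widetilde{G}$, are $\{$ISK4,wheel$\}$-free).

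The key step is therefore to apply Theorem~\ref{decomp-ISK4WheelFree-graph-strong} to $\widetilde{G}$, which is an $\{$ISK4,wheel$\}$-free graph, and obtain one of its four outcomes. Each outcome must then be transferred to $G$. If $\widetilde{G}$ is series-parallel, then $G$ is a series-parallel trigraph by definition, and similarly if $\widetilde{G}$ is complete bipartite or a line graph I would argue that $G$ is a complete bipartite trigraph or a line trigraph respectively --- here the line-graph case requires checking that all triangles of $G$ are strong, which should follow from the absence of a clique-cutset forcing semi-adjacent pairs inside triangles, and the complete-bipartite case requires ruling out semi-adjacent pairs within the two sides. The remaining outcome is that $\widetilde{G}$ has a clique-cutset or a proper $2$-cutset; I would translate a clique-cutset of $\widetilde{G}$ into a clique-cutset of $G$ (strong cliques of $\widetilde{G}$ correspond to strong cliques of $G$ since $\widetilde{G}$ adds no strong anti-adjacencies), and a proper $2$-cutset $\{a,b\}$ of $\widetilde{G}$ into either a clique-cutset or a stable $2$-cutset of $G$, using the remark that any cutset of size at most two is one of these two types.

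The main obstacle is the proper $2$-cutset outcome. A proper $2$-cutset of $\widetilde{G}$ is by definition a \emph{stable} pair $\{a,b\}$ with an additional non-path condition on the two sides, and it is not obvious that $\{a,b\}$ remains a cutset of the \emph{trigraph} $G$ in the right sense, nor that the semi-adjacent pairs of $G$ do not interfere. I expect this is where the argument genuinely has to follow the structure of the proof in~\cite{MR2927414} rather than merely invoking the theorem as a black box: one likely needs to re-examine the cyclically $3$-connected structure (via Lemmas~\ref{lemma43}--\ref{lemma48}) directly on a carefully chosen realization, tracking how semi-adjacent pairs sit relative to branches, holes, and the central square, so that the decompositions produced at the graph level are compatible with the trigraph's optional edges. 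In short, a clean black-box reduction handles the first three outcomes easily, but the cutset outcomes --- and especially ensuring that a graph-level proper $2$-cutset yields a genuine clique-cutset or stable $2$-cutset of $G$ without being destroyed or falsified by semi-adjacent pairs --- is the delicate part and is the reason the authors mirror the full original proof rather than quoting Theorem~\ref{decomp-ISK4WheelFree-graph-strong} directly.
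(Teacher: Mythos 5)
Your plan---apply Theorem~\ref{decomp-ISK4WheelFree-graph-strong} to the full realization $\widetilde{G}$ and lift each outcome back to $G$---is not what the paper does (the paper reproves the decomposition directly for trigraphs: reduce to the diamond-free case via Proposition~\ref{prop-diamond}, then split into series-parallel / strong $K_{3,3}$ / prism via Lemma~\ref{K33-or-prism}, and grow a maximal line trigraph by augmenting paths), and as written it has concrete gaps. First, you have the difficulty located in the wrong place. The proper $2$-cutset outcome is actually the \emph{easy} one to lift: a proper $2$-cutset $\{a,b\}$ of $\widetilde{G}$ is by definition a non-adjacent pair of $\widetilde{G}$, hence a \emph{strongly} anti-adjacent pair of $G$, so $\{a,b\}$ is a stable $2$-cutset of $G$ immediately (and the paper's remark that any cutset of size at most two is a clique-cutset or a stable $2$-cutset makes even that observation unnecessary). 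The genuinely problematic outcome is the clique-cutset one: a clique of $\widetilde{G}$ corresponds only to a \emph{clique} of $G$, not a strong clique, since edges of the full realization include semi-adjacent pairs. Your assertion that ``strong cliques of $\widetilde{G}$ correspond to strong cliques of $G$'' is exactly backwards on this point. A cutset of $\widetilde{G}$ that is a triangle containing a semi-adjacent pair of $G$ is neither a clique-cutset nor a stable $2$-cutset of $G$, and you give no argument that such cutsets can be avoided or repaired.

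Second, the line-graph outcome requires showing that every triangle of $G$ is strong, and ``this should follow from the absence of a clique-cutset'' is not an argument. The graph theorem only tells you that $\widetilde{G}$ is the line graph of a chordless graph $H$ of maximum degree at most three; to force a triangle of $G$ (coming from a degree-three vertex of $H$) to be strong, one must exhibit, in the realization where the semi-adjacent pair is turned into a non-edge, an ISK4 or wheel---which requires $H$ to be cyclically $3$-connected near that vertex (this is exactly why Proposition~\ref{prism-realization} works for prisms), and that structural information is not part of the statement you are quoting. If $H$ fails to be $2$-connected there, the triangle need not be strong, and you are thrown back onto the clique-cutset problem above. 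The same issue, in milder form, affects the complete bipartite outcome (you must rule out semi-adjacent pairs \emph{between} the sides, which needs a Proposition~\ref{K33-strong}-type argument and a separate treatment of the non-thick case via series-parallelness). Your closing paragraph effectively concedes all of this by saying the argument ``genuinely has to follow the structure of the proof in~\cite{MR2927414}''---that is, you end by deferring to the very proof you were asked to supply, which is what the paper carries out in Sections~4.1--4.5.
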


The remainder of this manuscript is devoted to proving Theorem~\ref{thm-decomp}. We begin with a simple but useful proposition.

\begin{proposition} \label{delete-three} Let $G$ be a non-null connected trigraph that is not a narrow path. Then $G$ contains three distinct vertices such that the deletion of any one of them yields a connected trigraph.
\end{proposition}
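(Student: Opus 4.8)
The plan is to reduce the statement to a purely graph-theoretic fact about the full realization of $G$ and then prove that fact by induction. Write $\widetilde{G}$ for the full realization of $G$. By definition, a trigraph is connected if and only if its full realization is connected, and the full realization of $G \smallsetminus v$ is exactly $\widetilde{G} \smallsetminus v$; hence, for every $v \in V(G)$, the trigraph $G \smallsetminus v$ is connected if and only if $v$ is not a cut-vertex of the graph $\widetilde{G}$. Moreover, $G$ is a narrow path if and only if $\widetilde{G}$ is a path. Consequently, Proposition~\ref{delete-three} is equivalent to the following statement about graphs: every connected graph $\widetilde{G}$ that is not a path has at least three vertices that are not cut-vertices. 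I would prove this by induction on $|V(\widetilde{G})|$.

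For the induction, first note that since every connected graph on at most two vertices is a path, the hypothesis forces $|V(\widetilde{G})| \geq 3$. If $\widetilde{G}$ has no cut-vertex, then all of its (at least three) vertices are non-cut-vertices and we are done. So suppose $\widetilde{G}$ has a cut-vertex $c$, and let $D_1,\dots,D_t$ (with $t \geq 2$) be the components of $\widetilde{G} \smallsetminus c$. For each $i$, set $\widetilde{G}_i = \widetilde{G}[V(D_i) \cup \{c\}]$; this is a connected graph with at least two vertices. Two observations drive the argument. First, $c$ is not a cut-vertex of $\widetilde{G}_i$, because $\widetilde{G}_i \smallsetminus c = D_i$ is connected. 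Second, any non-cut-vertex $x \neq c$ of $\widetilde{G}_i$ is a non-cut-vertex of $\widetilde{G}$: deleting $x$ leaves $\widetilde{G}_i \smallsetminus x$ connected and still containing $c$, while each $\widetilde{G}_j$ with $j \neq i$ is untouched and contains $c$, so $\widetilde{G} \smallsetminus x$ is connected through $c$.

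Each branch $\widetilde{G}_i$ supplies at least one non-cut-vertex of $\widetilde{G}$ lying in $D_i$: if $\widetilde{G}_i$ is a path, then (since $c$ is a non-cut-vertex, hence an endpoint) its other endpoint is such a vertex, and if $\widetilde{G}_i$ is not a path, the induction hypothesis gives three non-cut-vertices of $\widetilde{G}_i$, at least two of which differ from $c$. These chosen vertices lie in distinct components $D_i$ and are therefore pairwise distinct. If $t \geq 3$, three branches already yield three non-cut-vertices of $\widetilde{G}$. If $t = 2$, I claim some branch is not a path: otherwise $\widetilde{G}_1$ and $\widetilde{G}_2$ are paths sharing only the vertex $c$, which is an endpoint of each, and gluing two paths at a common endpoint produces a path, contradicting the hypothesis. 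Applying the induction hypothesis to that non-path branch yields at least two non-cut-vertices of $\widetilde{G}$ inside it (distinct from $c$), and the other branch supplies a third. The one genuinely delicate point — and the place where the not-a-path hypothesis is indispensable — is exactly this case $t = 2$ with a single separating cut-vertex, where the two branches naively produce only two non-cut-vertices and the hypothesis must be used to force a third; the remaining steps (the distinctness bookkeeping and the base case with no cut-vertex) are routine.
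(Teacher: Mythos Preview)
Your proof is correct. Both you and the paper begin with the same reduction to the full realization $\widetilde{G}$, but the arguments then diverge. The paper argues directly: if $\widetilde{G}$ is a cycle, every vertex works; otherwise $\widetilde{G}$ has a vertex $x$ of degree at least three, and a breadth-first search spanning tree rooted at $x$ has $\deg_T(x) = \deg_{\widetilde{G}}(x) \geq 3$, hence at least three leaves, each of which is a non-cut-vertex of $\widetilde{G}$. Your argument instead proceeds by induction on $|V(\widetilde{G})|$, splitting at a cut-vertex and tracking how non-cut-vertices in the branches lift to non-cut-vertices of the whole graph. The paper's approach is shorter and avoids induction entirely by exploiting the BFS property that the root keeps its full degree; your approach is more self-contained (no spanning-tree lemma needed) and makes the role of the ``not a path'' hypothesis very explicit in the $t=2$ case. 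Both are standard and equally valid.
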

\begin{proof}
Let $\widetilde{G}$ be the full realization of $G$. It suffices to show that $\widetilde{G}$ contains three distinct vertices such that the deletion of any one of them from $\widetilde{G}$ yields a connected graph. Since $G$ is connected and not a narrow path, we know that $\widetilde{G}$ is connected and not a path. If $\widetilde{G}$ is a cycle, then the deletion of any one of its vertices yields a connected graph, and since any cycle has at least three vertices, we are done. So assume that $\widetilde{G}$ is not a cycle. Then $\widetilde{G}$ contains a vertex $x$ of degree at least three. Let $T$ be a breadth-first search spanning tree of $\widetilde{G}$ rooted at $x$. Then ${\rm deg}_T(x) = {\rm deg}_{\widetilde{G}}(x) \geq 3$. Since $T$ is a tree that contains a vertex of degree at least three, we know that $T$ contains at least three leaves. But clearly, for any leaf $v$ of $T$, the graph $\widetilde{G} \smallsetminus v$ (and hence also the trigraph $G \smallsetminus v$) is connected.
\end{proof}

\subsection{Diamonds in wheel-free trigraphs}

The {\em diamond} is the graph obtained by deleting one edge from $K_4$. Equivalently, the {\em diamond} is the unique (up to isomorphism) graph on four vertices and five edges.

\begin{proposition} \label{prop-diamond} Let $G$ be a $\{K_4,{\rm wheel}\}$-free trigraph. Then either $G$ is diamond-free, or $G$ admits a clique-cutset, or $G$ admits a stable 2-cutset.
\end{proposition}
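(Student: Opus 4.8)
The plan is to show that, as soon as $G$ fails to be diamond-free, the pair $\{a,b\}$ consisting of the two degree-three vertices of a diamond is a cutset of $G$; the stated alternatives then follow immediately. So suppose $G$ contains a diamond-trigraph, say on $\{a,b,c,d\}$, where $a,b$ are the two vertices of degree three in the relevant realization (so $ab$, $ac$, $ad$, $bc$, $bd$ are adjacent pairs) and $cd$ is the unique non-edge. Since $G$ is $K_4$-free, the full realization of $G[\{a,b,c,d\}]$ is not $K_4$, and as its only candidate non-edge is $cd$, the pair $cd$ must be strongly anti-adjacent. More generally, writing $D$ for the set of common neighbors of $a$ and $b$, the same computation shows that $D$ is a strongly stable set: an adjacent pair inside $D$ would, together with $a$ and $b$, induce a subtrigraph whose full realization is $K_4$. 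Note $c,d\in D$, so $|D|\ge 2$. Finally, once $\{a,b\}$ is known to be a cutset we are finished: if $ab$ is strongly adjacent then $\{a,b\}$ is a clique-cutset, and if $ab$ is semi-adjacent then $ab$ is an anti-adjacent pair, so $\{a,b\}$ is a stable set of size two, i.e.\ a stable $2$-cutset.

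The heart of the argument is the claim that $c$ and $d$ lie in distinct components of $G\smallsetminus\{a,b\}$. Suppose not, and let $P = p_0 - p_1 - \dots - p_k$ (with $p_0 = c$, $p_k = d$) be a shortest narrow path between $c$ and $d$ in $G\smallsetminus\{a,b\}$; such a path exists because $c,d$ lie in a common component, and minimality guarantees that $G[V(P)]$ is a narrow path. Since $c$ and $d$ are strongly anti-adjacent, $k\ge 2$. Both $a$ and $b$ are adjacent to $p_0=c$ and to $p_k=d$, so we may let $\alpha$ (respectively $\beta$) be the smallest index in $\{1,\dots,k\}$ such that $p_\alpha$ (respectively $p_\beta$) is a neighbor of $a$ (respectively $b$); by symmetry assume $\alpha\le\beta$. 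A key preliminary observation is that $p_1\notin D$: otherwise $p_0=c$ and $p_1$ would be two vertices of the strongly stable set $D$ joined by an edge of $P$, which is impossible. In particular $a$ and $b$ are not both adjacent to $p_1$.

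I now extract a wheel. If $\alpha=\beta=:\gamma$, then $\gamma\ge 2$ (since $\gamma=1$ would force $p_1\in D$), and by minimality of $\gamma$ the vertex $a$ is anti-adjacent to each of $p_1,\dots,p_{\gamma-1}$; hence the cycle $a - p_0 - p_1 - \dots - p_\gamma - a$ is a hole, on which $b$ has the three neighbors $a$, $p_0$, $p_\gamma$. If instead $\alpha<\beta$, then $\beta\ge 2$ and $b$ is anti-adjacent to each of $p_1,\dots,p_{\beta-1}$, so $b - p_0 - p_1 - \dots - p_\beta - b$ is a hole, on which $a$ has the three neighbors $b$, $p_0$, $p_\alpha$ (here $1\le\alpha<\beta$ guarantees $p_\alpha$ lies on this hole). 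In either case, choosing a realization of the hole that is a chordless cycle and realizing the relevant pairs at the hub so that it is adjacent to its three distinguished rim vertices yields a realization that is a wheel; thus the induced subtrigraph is a wheel-trigraph, contradicting the wheel-freeness of $G$. This proves the claim, so $\{a,b\}$ is a cutset and, as noted, we are done.

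The main obstacle is precisely this wheel-extraction step: when $a$ or $b$ has neighbors in the interior of $P$, the naive cycle $a - p_0 - \dots - p_k - a$ need not be chordless, so it cannot serve directly as a rim. The device that resolves this is to compare the first neighbors $p_\alpha,p_\beta$ of $a$ and $b$ along $P$ and to take as hub the vertex whose first neighbor occurs \emph{later}: this simultaneously forces the rim (built from the initial segment of $P$ up to the earlier first neighbor) to be chordless and supplies the hub with three neighbors on it. The remaining work is only the routine passage from graphs to trigraphs: $K_4$-freeness enters solely through full realizations, and each hole and wheel we build is witnessed by an explicit realization, obtained by realizing narrow-path edges as edges, the (automatically strongly anti-adjacent) non-consecutive pairs along $P$ as non-edges, and the hub's anti-adjacent pairs as non-edges.
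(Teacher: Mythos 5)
Your proof is correct and follows essentially the same strategy as the paper's: both arguments show that the two degree-three vertices of a diamond (in the paper, a maximal clique $C$ of size two together with its set $A$ of common neighbors) form a cutset, by taking a shortest narrow path joining two common neighbors in $G \smallsetminus \{a,b\}$ and extracting a wheel whose rim runs through the apex whose first neighbor along the path occurs later. The minor differences --- you separate the specific vertices $c,d$ rather than the whole strongly stable set of common neighbors, and your case split $\alpha = \beta$ versus $\alpha < \beta$ replaces the paper's split on whether some apex is anti-complete to the interior of the path --- do not change the substance of the argument.
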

\begin{proof}
We assume that $G$ is not diamond-free, and that it contains no cut-vertices, for otherwise we are done. Using the fact that $G$ is not diamond-free, we fix an inclusion-wise maximal clique $C$ of size at least two such that at least two vertices in $V(G) \smallsetminus C$ are complete to $C$. Let $A$ be the set of all vertices in $V(G) \smallsetminus C$ that are complete to $C$ in $G$. By construction, $|C|,|A| \geq 2$, and since $G$ is $K_4$-free, we deduce that $|C| = 2$ (set $C = \{c_1,c_2\}$), and that $A$ is a strongly stable set. Now, we claim that $C$ is a cutset of $G$. Suppose otherwise. Then there exists a narrow path in $G \smallsetminus C$ between two distinct vertices in $A$; among all such narrow paths, choose a narrow path $P$ of minimum length, and let $a,a' \in A$ be the two endpoints of $P$. By the minimality of $P$, we know that $V(P) \cap A = \{a,a'\}$. Furthermore, since $A$ is a strongly stable set, we know that $P$ is of length at least two.

If at least one vertex of $C$, say $c_1$, is anti-complete to $V(P) \smallsetminus \{a,a'\}$, then $a-P-a'-c_1-a$ is a hole in $G$, and $c_2$ has at least three neighbors (namely, $a$, $a'$, and $c_1$) in it, contrary to the fact that $G$ is wheel-free. Thus, neither $c_1$ nor $c_2$ is anti-complete to $V(P) \smallsetminus \{a,a'\}$. Further, since $V(P) \cap A = \{a,a'\}$, we know that no interior vertex of $P$ is adjacent to both $c_1$ and $c_2$. Now, let $p_1$ be the (unique) interior vertex of $P$ such that $p_1$ is has a neighbor in $C$, and such that the interior of the narrow path $a-P-p_1$ is strongly anti-complete to $C$. By symmetry, we may assume that $p_1$ is adjacent to $c_1$ (and therefore strongly anti-adjacent to $c_2$). Next, let $p_2$ be the (unique) interior vertex of $P$ such that $p_2$ is adjacent to $c_2$, and $c_2$ is strongly anti-complete to the interior of $a-P-p_2$. Note that $p_1$ is an interior vertex of the narrow path $a-P-p_2$. Now $a-P-p_2-c_2-a$ is a hole in $G$, and $c_1$ has at least three neighbors (namely, $a$, $p_1$, and $c_2$) in it, contrary to the fact that $G$ is wheel-free. This proves that $C$ is a cutset of $G$. Since $C$ is a cutset of size two of $G$, we see that $C$ is either a clique-cutset or a stable 2-cutset of $G$.
\end{proof}

\subsection{Attachment to a prism}

A {\em prism} is a trigraph $K$ that consists of two vertex-disjoint strong triangles, call them $\{x,y,z\}$ and $\{x',y',z'\}$, an $(x,x')$-flat branch $P_x$, a $(y,y')$-flat branch $P_y$, and a $(z,z')$-flat branch $P_z$, such that $xy$ and $x'y'$ are the only adjacent pairs between $P_x$ and $P_y$, and similarly for the other two pairs of flat branches. Sets $\{x,y,z\}$, $\{x',y',z'\}$, $V(P_x)$, $V(P_y)$, and $V(P_z)$ are called the {\em pieces} of the prism. We call $\{x,y,z\}$ and $\{x',y',z'\}$ the {\em triangle pieces} of the prism, and we call $V(P_x)$, $V(P_y)$, and $V(P_z)$ the {\em branch pieces} of the prism. When convenient, we refer to the flat branches $P_x$, $P_y$, and $P_z$ of $K$ (rather than to the sets $V(P_x)$, $V(P_y)$, and $V(P_z)$) as the branch pieces of $K$. 

In what follows, we will often consider a prism $K$ (with $P_x$, $P_y$, and $P_z$ as in the definition of a prism) that is an induced subtrigraph of a trigraph $G$. We remark that in this case, $P_x$, $P_y$, and $P_z$ need only be flat branches of $K$, and not necessarily of $G$.

Let $G$ be a trigraph, and let $H$ and $C$ be induced subtrigraphs of $G$ on disjoint vertex sets. The {\em attachment} of $C$ over $H$ in $G$ is the set of all vertices of $H$ that have a neighbor in $C$. Furthermore,
\begin{itemize}
\item $C$ is of {\em type triangle} with respect to $H$ in $G$ provided that the attachment of $C$ over $H$ is contained in a strong triangle of $H$;
\item $C$ is of {\em type branch} with respect to $H$ in $G$ provided that the attachment of $C$ over $H$ is contained in a flat branch of $H$.
\item $C$ is an {\em augmenting path} of $H$ in $G$ provided all the following are satisfied:
\begin{itemize}
\item $C$ is a narrow path of length at least one;
\item the interior of $C$ is strongly anti-complete to $V(H)$ in $G$;
\item if $a$ and $b$ are the endpoints of $C$, then there exist two distinct flat branches of $H$, call them $F_a$ and $F_b$, such that for each $x \in \{a,b\}$, $x$ has exactly two neighbors (call them $x_1$ and $x_2$) in $H$, $\{x,x_1,x_2\}$ is a strong triangle of $G$, and $x_1,x_2 \in V(F_x)$ (note that this implies that $x_1x_2$ is a strongly adjacent pair of $F_x$).
\end{itemize}
\end{itemize}

If $G$ is a trigraph, $H$ an induced subtrigraph of $G$, and $v \in V(G) \smallsetminus V(H)$, then the {\em attachment} of $v$ over $H$ in $G$ is the set of all vertices of $H$ that are adjacent to $v$; furthermore, we say that $v$ is of {\em type triangle} (respectively: of {\em type branch}) with respect to $H$ in $G$ provided that $G[v]$ is of type triangle (respectively: of type branch) with respect to $H$ in $G$.

\begin{proposition} \label{prism-vertex} Let $G$ be an \{ISK4,wheel,diamond\}-free trigraph, let $K$ be an induced subtrigraph of $G$ such that $K$ is a prism, and let $v \in V(G) \smallsetminus V(K)$. Then $v$ has at most two neighbors in $K$, and furthermore, $v$ is of type branch with respect to $K$.
\end{proposition}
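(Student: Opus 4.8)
The plan is to work realization by realization---since $G$ is \{ISK4,wheel,diamond\}-free, \emph{every} realization of $G$ is ISK4-, wheel-, and diamond-free, so I may decide semi-adjacent pairs freely to exhibit a forbidden induced subgraph. Write $\{x,y,z\}$ and $\{x',y',z'\}$ for the triangle pieces and $P_x,P_y,P_z$ for the branch pieces, and for each pair of branches let $C_{xy}$ be the subtrigraph $x\!-\!P_x\!-\!x'\!-\!y'\!-\!P_y\!-\!y\!-\!x$ (similarly $C_{yz}$, $C_{zx}$); because $P_x,P_y,P_z$ are flat branches and $xy,x'y',\dots$ are the only cross-branch adjacencies, each $C_{ij}$ is a hole. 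First I would bound the attachment inside each triangle: if $v$ had two neighbors in a strong triangle, say $x,y$, then $\{v,x,y,z\}$ would induce a $K_4$-trigraph (if $vz$ is adjacent) or, choosing the realization in which $vz$ is a non-edge, a diamond-trigraph---either way a contradiction. Hence $v$ has at most one neighbor in each triangle piece. Second, since $v\notin V(K)$ and each $C_{ij}$ is a hole, wheel-freeness gives that $v$ has at most two neighbors on each of $C_{xy},C_{yz},C_{zx}$.

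The crux is showing $v$ is of type branch. Since the branch pieces $V(P_x),V(P_y),V(P_z)$ partition $V(K)$, it suffices to rule out that $v$ has neighbors $a\in V(P_x)$ and $b\in V(P_y)$ in two distinct branch pieces. If $a,b$ lie in a common triangle piece (i.e.\ $\{a,b\}=\{x,y\}$ or $\{x',y'\}$) we are done by the triangle bound. Otherwise the key device is an \emph{apex-plus-opposite-triangle} induced subdivision of $K_4$: for instance when $a=x$, I would take apex $x$ and base triangle $\{x',y',z'\}$, with the three internally disjoint paths $x\!-\!P_x\!-\!x'$, then $x\!-\!z\!-\!P_z\!-\!z'$, and $x\!-\!v\!-\!(b\!-\!P_y\!-\!y')$. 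The whole point of routing the third path through $v$ is that it reaches $y'$ while \emph{avoiding} $y$, so that the triangle edge $yz$ does not become a chord; deciding the relevant semi-adjacent pairs to be non-edges then makes this an induced subdivision of $K_4$, contradicting ISK4-freeness. The same construction, with the apex taken at the triangle vertex nearest the relevant attachment, covers every configuration in which one of $a,b$ is an endpoint of its branch or is an interior vertex adjacent to a corner. In the remaining configurations I would instead produce a wheel whose rim passes through $v$, the two triangle-mates of a corner, and the third branch $P_z$, centered at that corner. Having eliminated all of these, the attachment of $v$ is contained in a single branch piece, so $v$ is of type branch.

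Finishing the count is then immediate: if $v$ is of type branch with all neighbors in $V(P_x)\subseteq V(C_{xy})$, the ``at most two neighbors on the hole $C_{xy}$'' bound from the first paragraph gives that $v$ has at most two neighbors in $K$. The main obstacle is the cross-branch analysis of the second paragraph, and it has two delicate aspects. The first is bookkeeping: the split on the positions of $a$ and $b$ (each an endpoint or an interior vertex of its branch) must be organized so that, up to the symmetry exchanging the two triangle pieces and permuting $P_x,P_y,P_z$, every case is covered, and matched with the correct apex/triangle pairing or wheel center. The second, and the genuinely hard point, is \emph{inducedness}: the two strong triangles of the prism are exactly what make the argument nontrivial, since a triangle edge left unused by a prospective subdivision of $K_4$ immediately becomes a chord. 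The entire role of $v$ is to supply a path that bypasses one vertex of a triangle, and the subtlest case---when both attachments lie deep in the interiors of their branches, adjacent to no corner---is where one must work hardest to extract a forbidden induced subgraph rather than merely a (non-induced) $K_4$-subdivision of the kind guaranteed by Lemma~\ref{lemma48}.
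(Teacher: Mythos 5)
Your first paragraph is fine and matches the paper's opening moves (the diamond/$K_4$ bound on each triangle, and wheels on the branch-pair holes), but the cross-branch analysis in your second paragraph has two genuine gaps. First, reducing to ``$v$ has neighbors $a\in V(P_x)$ and $b\in V(P_y)$'' does not entitle you to assume $v$ has no neighbor in $P_z$: your hole bounds only give at most \emph{three} neighbors in $K$ (one per branch piece), and if $v$ has a third neighbor $c\in V(P_z)$, then your apex subdivision --- whose vertex set contains all of $P_z$ --- acquires the extra edge $vc$ and is no longer an induced subdivision of $K_4$. The paper disposes of this case first, with a different ISK4: $v$ as the apex joined to the truncated paths $x-P_x-x^L$, $y-P_y-y^L$, $z-P_z-z^L$ (where $x^L$ is the neighbor of $v$ on $P_x$ nearest $x$, etc.), with $\{x,y,z\}$ as the base triangle. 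Only after that does it deduce ``at most two neighbors in $K$'' from a wheel on $P_x\cup P_y$, which is what makes the remaining case a clean ``exactly one neighbor $v_x$ in $P_x$ and exactly one neighbor $v_y$ in $P_y$.''

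Second, and fatally, the wheel you invoke for the ``remaining configurations'' (both $a$ and $b$ deep in the interiors of their branches) does not exist. Let $x^+$ be the neighbor of $x$ on $P_x$. In $G[V(K)\cup\{v\}]\smallsetminus x$ the vertex $x^+$ has degree one (its only neighbors are $x$ and the next vertex of $P_x$; $v$ is not adjacent to it because $a$ is deep interior), so $x^+$ lies on no cycle avoiding $x$; hence every hole avoiding $x$ contains at most the two neighbors $y,z$ of $x$, and $x$ is never a wheel center. The same degree count rules out $a$ and $b$ as centers ($v$ has degree one once $a$ is deleted). So the contradiction in this case must come from an ISK4, and the paper's choice works uniformly with no deep/shallow split: after arranging by symmetry that $v_x\neq x'$ and $v_y\neq y$ (possible since $v$ has at most one neighbor in each triangle), the set $\{v\}\cup V(x-P_x-v_x)\cup V(P_y)\cup V(P_z)$ induces a subdivision of $K_4$ with branch vertices $x$, $y$, $z$, $v_y$ and connecting paths $x-P_x-v_x-v-v_y$, $\ y-P_y-v_y$, and $v_y-P_y-y'-z'-P_z-z$, together with the triangle edges $xy$, $xz$, $yz$. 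You need to replace your wheel with this (or an equivalent) ISK4; as written, the hardest case of your argument rests on a forbidden subgraph that is not there.
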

\begin{proof}
Let $\{x,y,z\}$, $\{x',y',z'\}$, $P_x$, $P_y$, and $P_z$ be the pieces of the prism $K$, as in the definition of a prism. Note that $v$ has at most one neighbor in $\{x,y,z\}$, for otherwise, $G[v,x,y,z]$ would be either a diamond-trigraph or a $K_4$-trigraph, contrary to the fact that $G$ is $\{{\rm ISK4},{\rm diamond}\}$-free. Similarly, $v$ has at most one neighbor in $\{x',y',z'\}$. Suppose first that $v$ has neighbors in each of $P_x$, $P_y$, and $P_z$. Let $x^L$ be the neighbor of $v$ in $P_x$ such that $v$ is strongly anti-complete to the interior of the narrow path $x-P_x-x^L$, and let $y^L$ and $z^L$ be chosen analogously. Then $v$, $x-P_x-x^L$, $y-P_y-y^L$, and $z-P_z-z^L$ induce an ISK4-trigraph in $G$ (here, we use the fact that $v$ has at most one neighbor in $\{x',y',z'\}$), contrary to the fact that $G$ is ISK4-free. Thus, $v$ has neighbors in at most two of $P_x$, $P_y$, and $P_z$, and by symmetry, we may assume that $v$ is strongly anti-complete to $P_z$. If $v$ has more than two neighbors in $K$ (and therefore in $V(P_x) \cup V(P_y)$, since $v$ is strongly anti-complete to $P_z$), then $v$, $P_x$, and $P_y$ induce a wheel-trigraph in $G$, which is a contradiction. Thus, $v$ has at most two neighbors in $K$.

It remains to show that $v$ is of type branch. We may assume that $v$ has a unique neighbor (call it $v_x$) in $P_x$, and a unique neighbor (call it $v_y$) in $P_y$, for otherwise, $v$ is of type branch with respect to $K$, and we are done. Since $v$ has at most one neighbor in $\{x,y,z\}$ and at most one neighbor in $\{x',y',z'\}$, we know that either $v_x \neq x'$ and $v_y \neq y$, or $v_x \neq x$ and $v_y \neq y'$; by symmetry, we may assume that $v_x \neq x'$ and $v_y \neq y$. But now $v$, $x-P_x-v_x$, $P_y$, and $P_z$ induce an ISK4-trigraph in $G$, contrary to the fact that $G$ is ISK4-free.
\end{proof}

\begin{lemma} \label{prism-conn} Let $G$ be an \{ISK4,wheel,diamond\}-free trigraph, let $K$ be an induced subtrigraph of $G$ such that $K$ is a prism, and let $P$ be an inclusion-wise minimal connected induced subtrigraph of $G \smallsetminus V(K)$ such that $P$ is neither of type branch nor of type triangle with respect to $K$. Then $P$ is an augmenting path of $K$.
\end{lemma}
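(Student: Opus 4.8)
Throughout, write $\{x,y,z\}$, $\{x',y',z'\}$, $P_x$, $P_y$, $P_z$ for the pieces of the prism $K$. The flat branches of $K$ are exactly $P_x,P_y,P_z$, and the strong triangles of $K$ are exactly the subsets of $\{x,y,z\}$ and of $\{x',y',z'\}$; moreover any two distinct flat branches are vertex-disjoint, any two distinct triangles are vertex-disjoint, and a flat branch meets a triangle in at most one (``corner'') vertex. For an induced subtrigraph $S$ of $G\smallsetminus V(K)$, let $N(S)$ denote the attachment of $S$ over $K$. The plan is to prove, in order: (1) $|V(P)|\ge 2$; (2) $P$ is a narrow path; (3) both endpoints of $P$ attach to $K$, the interior of $P$ is strongly anti-complete to $K$, and the relevant attachments lie in two distinct pieces; and (4) each endpoint attaches to exactly two consecutive vertices of a flat branch, forming a strong triangle, with the two branches distinct. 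Step (1) is immediate: by Proposition~\ref{prism-vertex} every single vertex of $G\smallsetminus V(K)$ is of type branch, so $P$ must have at least two vertices.

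For (2), suppose $P$ is not a narrow path. Since $P$ is connected and non-null, Proposition~\ref{delete-three} yields three distinct vertices $u_1,u_2,u_3$ such that each $P\smallsetminus u_i$ is connected, hence (by minimality of $P$) of type branch or of type triangle; let $D_i$ be a piece (flat branch or triangle) containing $N(P\smallsetminus u_i)$. By pigeonhole two of them, say $D_1,D_2$, are of the same kind. If $D_1=D_2$, then $N(u_1)\subseteq D_2$ and $N(u_2)\subseteq D_1$ (since $u_1\in V(P\smallsetminus u_2)$ and $u_2\in V(P\smallsetminus u_1)$), so $N(P)\subseteq D_1$ and $P$ is of type branch or type triangle, a contradiction. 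If $D_1\ne D_2$, then $D_1\cap D_2=\emptyset$, so every vertex of $P$ other than $u_1,u_2$, lying in both $P\smallsetminus u_1$ and $P\smallsetminus u_2$, is anti-complete to $K$; in particular $u_3$ is, whence $N(P\smallsetminus u_3)=N(P)$ and $P\smallsetminus u_3$ is neither of type branch nor of type triangle, contradicting minimality. So $P$ is a narrow path $p_0-\cdots-p_k$; set $a:=p_0$ and $b:=p_k$, with $k\ge 1$ by Step~(1).

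For (3), minimality gives pieces $D_a,D_b$ with $N(P\smallsetminus a)\subseteq D_a$ and $N(P\smallsetminus b)\subseteq D_b$. If an endpoint were anti-complete to $K$, then $N(P)$ would equal the attachment of the complementary subpath, making $P$ of type branch or type triangle; hence both endpoints attach. Each interior vertex lies in both $P\smallsetminus a$ and $P\smallsetminus b$, so its attachment lies in $D_a\cap D_b$; and $D_a\ne D_b$, for otherwise $N(P)\subseteq D_a$. Since $D_a\cap D_b$ is therefore either empty or a single corner vertex, the interior of $P$ is strongly anti-complete to $K$ apart from possible edges to one corner.

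Finally (4), which I expect to be the main obstacle. The pivotal sub-claim is that \emph{an endpoint of $P$ cannot have exactly one neighbor in $K$}: combining $P$ with two branches and the two triangles of $K$ would then produce three internally disjoint paths joining two branch vertices and hence an induced subdivision of $K_4$ (or a wheel), contradicting the hypotheses. Granting this, the remaining leftover corner case of Step~(3) is excluded (that case forces an endpoint to attach at a single branch vertex), so the interior is strongly anti-complete to $K$. For the endpoints: each $v\in\{a,b\}$ is of type branch with $1\le|N(v)|\le 2$ by Proposition~\ref{prism-vertex} and Step~(3), so $|N(v)|=2$, say $N(v)=\{v_1,v_2\}$ on a flat branch $F_v$; the two neighbors must be consecutive, for otherwise an interior branch vertex between them again yields an ISK4 or a wheel. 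The trigraph subtlety is that $\{v,v_1,v_2\}$ must be a \emph{strong} triangle: since $G$ is ISK4- and wheel-free, so is every realization, and deciding any semi-adjacent pair among $v,v_1,v_2$ to a non-edge reduces to a forbidden single-neighbor configuration in that realization; hence all three pairs are strongly adjacent. Lastly $F_a\ne F_b$, since otherwise $N(P)=N(a)\cup N(b)\subseteq F_a$ and $P$ would be of type branch. Assembling (1)--(4) shows that $P$ is an augmenting path of $K$.
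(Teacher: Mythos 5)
Your steps (1)--(3) are sound and essentially match the paper's proof (your pigeonhole argument in step (2), splitting the three pieces by kind, is a small valid variant of the paper's ``three pairwise distinct, pairwise intersecting pieces cannot exist'' contradiction). The genuine gap is in step (4), which is where the real content of the lemma lives. Your ``pivotal sub-claim'' --- that an endpoint of $P$ cannot have exactly one neighbor in $K$ --- is true in the end, but the justification you offer does not work: three internally disjoint paths joining two branch vertices form a \emph{theta} (a subdivision of $K_{2,3}$), which is neither a subdivision of $K_4$ nor a wheel, so nothing is contradicted. The actual constructions are more delicate. For instance, when both endpoints have a unique neighbor ($w\in V(P_x)$, $w'\in V(P_y)$), one must first invoke the hypothesis that $P$ is not of type triangle to get $\{w,w'\}\neq\{x,y\}$ and $\{w,w'\}\neq\{x',y'\}$, hence (up to symmetry) $w\neq x'$ and $w'\neq y$; only then do $P$, $x-P_x-w$, $P_y$ and $P_z$ induce an ISK4-trigraph, whose four branch vertices are $x$, $y$, $z$ and $w'$ --- the construction hinges on using an entire triangle of the prism, not on joining two branch vertices by three paths. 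A separate construction ($P$, $v'-P_x-x'$, $P_y$, $P_z$) is needed for your leftover ``corner'' case in which one of the two pieces is a triangle.

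Concretely, the configurations you do not dispose of are: (i) one endpoint with a unique neighbor and the other with two; (ii) an endpoint with two neighbors at least one of which is only semi-adjacent; and (iii) an endpoint with two neighbors $w_1,w_2$ on a flat branch forming an anti-adjacent pair. The paper handles (i) and (ii) in one stroke: if the attachment of $P$ over $K$ has at least three vertices but at most three of them have a \emph{strong} neighbor in $\{p,p'\}$, then $P_x$, $P_y$ and $P$ already induce an ISK4-trigraph (realize the weak attachment as a non-edge so that one end of $P$ attaches at a single vertex, yielding a $K_4$-subdivision with branch vertices $p$ and the two attachments at the other end, plus the single attachment). It handles (iii) with the ISK4 induced by $x-P_x-w_1$, $w_2-P_x-x'$, $P_y$ and $P$. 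Your sentence about ``an interior branch vertex between them'' does not describe a construction (the interior of $P_x$ contains no branch vertices of $K$), and your argument that $\{v,v_1,v_2\}$ is a strong triangle explicitly ``reduces to a forbidden single-neighbor configuration,'' i.e.\ it leans on the very sub-claim whose proof is missing. As written, the proposal establishes the preparatory structure of the paper's proof but not its conclusion.
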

\begin{proof}
Let $\{x,y,z\}$, $\{x',y',z'\}$, $P_x$, $P_y$, and $P_z$ be the pieces of the prism $K$, as in the definition of a prism. By Proposition~\ref{prism-vertex}, $P$ contains more than one vertex. Let us show that $P$ is a narrow path. Suppose otherwise; then Proposition~\ref{delete-three} guarantees that $P$ contains three distinct vertices, call them $a$, $b$, and $c$, such that the deletion of any one of them from $P$ yields a connected trigraph. By the minimality of $P$, each of $P \smallsetminus a$, $P \smallsetminus b$, and $P \smallsetminus c$ is of type branch or triangle with respect to $K$. Then for each $v \in \{a,b,c\}$, there exists a piece $X_v$ of the prism $K$ such that the attachment of $P \smallsetminus v$ over $K$ is contained in $X_v$, and $v$ has a neighbor $\widetilde{v}$ in $V(K) \smallsetminus X_v$. Then $\widetilde{a} \in (X_b \cap X_c) \smallsetminus X_a$, $\widetilde{b} \in (X_a \cap X_c) \smallsetminus X_b$, and $\widetilde{c} \in (X_a \cap X_b) \smallsetminus X_c$. Thus, $X_a$, $X_b$, and $X_c$ are pairwise distinct and pairwise intersect. But no three pieces of $K$ have this property. This proves that $P$ is a narrow path.

Let $p$ and $p'$ be the endpoints of the narrow path $P$; since $P$ has at least two vertices, we know that $p \neq p'$. By the minimality of $P$, we know that there exist distinct pieces $A$ and $A'$ of the prism $K$ such that the attachment of $P \smallsetminus p'$ over $K$ is included in $A$, and the attachment of $P \smallsetminus p$ over $K$ is included in $A'$. Thus, the attachment of $p$ over $K$ is included in $A$, the attachment of $p'$ over $K$ is included in $A'$, and the attachment of every interior vertex of $P$ over $K$ is included in $A \cap A'$.
Since $A$ and $A'$ are distinct pieces of $K$, we see that $|A \cap A'| \leq 1$. Furthermore, by the minimality of $P$, the attachment of $p$ over $K$ is non-empty, as is the attachment of $p'$ over $K$.

Let us show that the interior of the narrow path $P$ is strongly anti-complete to $K$. Suppose otherwise. Then $A \cap A' \neq \emptyset$, and it follows that one of $A$ and $A'$ is a triangle and the other a flat branch of $K$; by symmetry, we may assume that $A = \{x,y,z\}$ and $A' = V(P_x)$. Thus, every interior vertex of $P$ is either strongly anti-complete to $K$, or has exactly one neighbor (namely $x$) in $K$; furthermore, at least one interior vertex of $P$ is adjacent to $x$ (because the attachment of the interior of $P$ over $K$ is non-empty). By Proposition~\ref{prism-vertex}, $p$ is of type branch, and it therefore has at most one neighbor in the triangle $\{x,y,z\}$; since the attachment of $p$ over $K$ is included in $\{x,y,z\}$, it follows that $p$ has exactly one neighbor in $K$ (and that neighbor belongs to the set $\{x,y,z\}$). If $x$ is the unique neighbor of $p$ in $K$, then $P$ is of type branch with respect to $K$, which is a contradiction. So by symmetry, we may assume that $y$ is the unique neighbor of $p$ in $K$. Since $P$ is not of type triangle, we know that $p'$ has a neighbor in $V(P_x) \smallsetminus \{x\}$; let $v' \in V(P_x) \smallsetminus \{x\}$ be the neighbor of $p'$ such that $p'$ is strongly anti-complete to the interior of of $v'-P_x-x'$. Then $P$, $v'-P_x-x'$, $P_y$, and $P_z$ induce an ISK4-trigraph in $G$, which is a contradiction. This proves that the interior of $P$ is strongly anti-complete to $K$.

Now, by Proposition~\ref{prism-vertex}, we know that both $p$ and $p'$ are of type branch; since the interior of $P$ is strongly anti-complete to $K$, and since $P$ is not of type branch with respect to $K$, we may assume by symmetry that the attachment of $p$ over $K$ is included in $P_x$, while the attachment of $p'$ over $K$ is included in $P_y$. Recall that the attachment of $p$ over $K$ is non-empty, as is the attachment of $p'$ over $K$. Further, by Proposition~\ref{prism-vertex}, each of $p$ and $p'$ has at most two neighbors in $K$. If the attachment of $P$ over $K$ contains at least three vertices, and at most three vertices in the attachment of $P$ over $K$ have a strong neighbor in $\{p,p'\}$, then it is easy to see that $P_x$, $P_y$, and $P$ induce an ISK4-trigraph in $G$, contrary to the fact that $G$ is ISK4-free. Thus, either each of $p,p'$ has a unique neighbor in $K$, or each of $p,p'$ has exactly exactly two neighbors (both of them strong) in $K$.

Suppose first that each of $p$ and $p'$ has a unique neighbor in $K$. Let $w$ be the unique neighbor of $p$ in $K$, and let $w'$ be the unique neighbor of $p'$ in $K$; by construction, $w \in V(P_x)$ and $w' \in V(P_y)$. Since $P$ is not of type triangle with respect to $K$, we know that $\{w,w'\} \neq \{x,y\}$ and $\{w,w'\} \neq \{x',y'\}$. Clearly then, either $w \neq x$ and $w' \neq y'$, or $w \neq x'$ and $w' \neq y$; by symmetry, we may assume that $w \neq x'$ and $w' \neq y$. But then $P$, $x-P_x-w$, $P_y$, and $P_z$ induce an ISK4-trigraph in $G$, contrary to the fact that $G$ is ISK4-free.

We now have that each of $p$ and $p'$ has exactly two neighbors in $K$, and that each of those neighbors is strong. Let $w_1,w_2$ be the two neighbors of $p$ in $K$, and let $w_1',w_2'$ be the two neighbors of $p'$ in $K$. Clearly, $w_1,w_2 \in V(P_x)$ and $w_1',w_2' \in V(P_y)$; by symmetry, we may assume that $w_2$ does not lie on the narrow path $x-P_x-w_1$, and that $w_2'$ does not lie on the narrow path $y-P_y-w_1'$. Now, if $w_1w_2$ and $w_1'w_2'$ are strongly adjacent pairs, then $P$ is an augmenting path for $K$ in $G$, and we are done. So assume that at least one of $w_1w_2$ and $w_1'w_2'$ is an anti-adjacent pair; by symmetry, we may assume that $w_1w_2$ is an anti-adjacent pair. But now $x-P_x-w_1$, $w_2-P_x-x'$, $P_y$, and $P$ induce an ISK4-trigraph in $G$, which is a contradiction. This completes the argument.
\end{proof}

\subsection{Line trigraphs}

We remind the reader that a {\em line trigraph} of a graph $H$ is a trigraph $G$ whose full realization is the line graph of $H$, and all of whose triangles are strong. We also remind the reader that a graph is {\em chordless} if all its cycles are induced. A {\em chordless subdivision} of a graph $H$ is any chordless graph that can be obtained by possibly subdividing the edges of $H$. We observe that if $G$ is a wheel-free trigraph that is a line trigraph of a graph $H$, then $H$ is chordless. (In fact, it is not hard to see that if $G$ is a line trigraph of a graph $H$, then $G$ is wheel-free if and only if $H$ is chordless. However, we do not use this stronger fact in what follows.)

\begin{proposition} \label{K4-vertex} Let $G$ be an \{ISK4,wheel,diamond\}-free trigraph,
let $K$ be an induced subtrigraph of $G$ such that $K$ is a line trigraph of a chordless subdivision $H$ of $K_4$, and let $v \in V(G) \smallsetminus V(K)$. Then $v$ has at most two neighbors in $K$, and furthermore, $v$ is of type branch with respect to $K$.
\end{proposition}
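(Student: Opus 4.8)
The plan is to analyze the structure of a line trigraph of a chordless subdivision $H$ of $K_4$ and to imitate, in the trigraph setting, the argument used for prisms in Proposition~\ref{prism-vertex}. First I would set up notation. Since $K$ is a line trigraph of $H$, the vertices of $K$ are the edges of $H$, and $K$ is wheel-free (because we are inside an $\{{\rm ISK4},{\rm wheel}\}$-free trigraph), so by the observation preceding the proposition $H$ is chordless. A chordless subdivision of $K_4$ consists of four branch vertices of degree three and six internal branches joining them; accordingly, $V(K)$ is partitioned into pieces that behave much like the pieces of a prism. At each of the four branch vertices of $H$, the three incident edges form a strong triangle of $K$ (triangles of line trigraphs come from stars, and all triangles of $K$ are strong by definition); along each of the six branches of $H$, the corresponding edges form a flat branch of $K$. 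So $K$ has four ``triangle pieces'' and six ``branch pieces,'' and any two of the four triangles share at most one vertex (the edge of $H$ lying on the branch joining the two corresponding branch vertices). This is the combinatorial skeleton that replaces the prism.

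Next I would bound the number of neighbours of $v$ inside any single triangle piece. As in Proposition~\ref{prism-vertex}, if $v$ had two or more neighbours in a strong triangle $T$ of $K$, then $G[\{v\}\cup T]$ would be a diamond-trigraph or a $K_4$-trigraph, contradicting that $G$ is $\{{\rm ISK4},{\rm diamond}\}$-free. Hence $v$ has at most one neighbour in each of the four triangle pieces. The heart of the argument is then to rule out the possibility that $v$ has neighbours spread over too many branches. The key move is to find, whenever $v$ attaches to several branch pieces of $K$, an induced subdivision of $K_4$ or an induced wheel using $v$ together with suitably truncated branches, exactly as in the prism proof. Concretely, if $v$ has a neighbour on three pairwise ``independent'' branches of $H$ (three branches forming a structure around which $v$ closes a $K_4$), I would truncate each such branch at the first neighbour of $v$ (choosing $v^L$ so that $v$ is strongly anti-complete to the relevant sub-branch interior) and then check that $v$ together with these truncated flat branches induces an ISK4-trigraph, contradicting ISK4-freeness; alternatively, if the neighbours of $v$ pile up so as to give a chordless cycle with a vertex having three neighbours on it, I would instead exhibit a wheel-trigraph.

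The main obstacle, and where the prism argument does not transfer verbatim, is the case analysis dictated by the richer incidence structure of $K_4$ compared with a prism. A prism has exactly three branch pieces forming two disjoint triangles, so the ``no three pieces pairwise intersect'' dichotomy is clean; in $L(H)$ for $H$ a subdivision of $K_4$, there are six branches and four triangles, and pairs of triangles do intersect (two branch vertices of $K_4$ are joined by an edge, hence the two stars share the edge of $H$ on that branch). I would therefore need to organise the six branches according to which pairs of the four triangle pieces they connect, and carefully treat the configurations in which $v$'s attachment is confined to a single flat branch (the desired type-branch conclusion), to a single triangle, or spreads across branches sharing a common branch vertex of $H$. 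The delicate subcase is when $v$ has exactly two neighbours lying on two different branches that meet at a branch vertex of $H$: there I must check, as in the final paragraph of Proposition~\ref{prism-vertex}, whether the two neighbours together with the incident triangle close up an ISK4 or force the attachment to land inside a single flat branch. I expect that once the correct three or four branches are selected to play the role of the three paths $P_x,P_y,P_z$ in the prism proof, Lemma~\ref{lemma43} (applied to the cyclically $3$-connected graph underlying $H$) or a direct truncation argument yields the ISK4-trigraph or wheel-trigraph needed, and the proposition follows by symmetry among the four branch vertices of $K_4$.
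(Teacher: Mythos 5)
There is a genuine gap. Your proposal correctly sets up the skeleton of $K$ (four strong triangles $T_a,T_b,T_c,T_d$ coming from the stars at the branch vertices of $H$, six flat branches $P_{xy}$ coming from the subdivided edges of $K_4$), and the ``at most one neighbour per strong triangle'' step via a diamond- or $K_4$-trigraph is fine. But the part you yourself identify as ``the heart of the argument'' --- ruling out every configuration in which the attachment of $v$ spreads over two or more branch pieces --- is never actually carried out. You list the configurations that must be excluded (neighbours on three branches closing a $K_4$, neighbours on two branches meeting at a vertex of $K_4$, and so on), note that the incidence structure is richer than for a prism, and then conclude with ``I expect that \dots the proposition follows by symmetry.'' That expectation is where the proof has to live: each configuration (including, e.g., one neighbour on $P_{ab}$ and one on the vertex-disjoint branch $P_{cd}$) requires an explicit ISK4- or wheel-trigraph, and none is exhibited. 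As written, the proposal is a plan, not a proof.

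The paper avoids this case analysis entirely with one observation you are missing: for every pair of distinct $x,y\in\{a,b,c,d\}$, the trigraph $K\smallsetminus V(P_{xy})$ is a \emph{prism} $K_{xy}$ (its triangle pieces are the two triangles $T_z$ with $z\notin\{x,y\}$, and its three branch pieces are $V(P_{zw})$ for the remaining pair together with the two concatenations through $x$ and through $y$). Applying Proposition~\ref{prism-vertex} to each of these six prisms immediately gives that $v$ is strongly anti-complete to some $P_{xy}$, hence has at most two neighbours in $K$ (since it has at most two in the prism $K_{xy}$), and a purely combinatorial comparison of the flat branches of two different prisms $K_{xy}$ and $K_{x'y'}$ forces the attachment into a single flat branch of $K$. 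No new ISK4 or wheel construction is needed. If you want to salvage your direct approach, you must write out the truncation arguments for each configuration; otherwise, the reduction to prisms is the intended (and much shorter) route. A small additional correction: since $H$ is chordless, every edge of $K_4$ is subdivided at least once, so the four triangles $T_a,T_b,T_c,T_d$ are in fact pairwise \emph{disjoint}; your parenthetical claim that two of them share the vertex corresponding to the edge of $H$ on the joining branch is not right.
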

\begin{proof}
Since $H$ is a chordless subdivision of $K_4$, we know that each edge of $K_4$ is subdivided at least once to obtain $H$. Let $a$, $b$, $c$, and $d$ be the four vertices of $H$ of degree three. For each $x \in \{a,b,c,d\}$, the three edges of $H$ incident with $x$ form a strong triangle of $K$, and we label this strong triangle $T_x$. In $K$, for all distinct $x,y \in \{a,b,c,d\}$, there is a unique narrow path (which we call $P_{xy}$) such that one endpoint of this narrow path belongs to $T_x$, the other endpoint belongs to $T_y$, and no interior vertex of this narrow path belongs to any one of the four strong triangles $T_a$, $T_b$, $T_c$, and $T_d$. For all distinct $x,y \in \{a,b,c,d\}$, we have that $P_{xy}$ is of length at least one (because $H$ is obtained by subdividing each edge of $K_4$ at least once) and that $P_{xy} = P_{yx}$; furthermore, the six narrow paths ($P_{ab}$, $P_{ac}$, $P_{ad}$, $P_{bc}$, $P_{bd}$, and $P_{cd}$) are vertex-disjoint. Finally, we have that $V(K) = V(P_{ab}) \cup V(P_{ac}) \cup V(P_{ad}) \cup V(P_{bc}) \cup V(P_{bd}) \cup V(P_{cd})$, and for all adjacent pairs $uu'$ of $K$, we have that either there exists some $x \in \{a,b,c,d\}$ such that $u,u' \in T_x$, or there exist distinct $x,y \in \{a,b,c,d\}$ such that $u,u' \in V(P_{xy})$.

For all distinct $x,y \in \{a,b,c,d\}$, set $K_{xy} = G \smallsetminus V(P_{xy})$, and note that $K_{xy}$ is a prism, and so by Proposition~\ref{prism-vertex}, $v$ has at most two neighbors in $K_{xy}$ and is of type branch with respect to $K_{xy}$. Therefore, for all distinct $x,y \in \{a,b,c,d\}$, $v$ is strongly anti-complete to some flat branch of $K_{xy}$. Consequently, there exist some distinct $x,y \in \{a,b,c,d\}$ such that $v$ is strongly anti-complete to $P_{xy}$; by symmetry, we may assume that $v$ is strongly anti-complete to $P_{ab}$. Since (by Proposition~\ref{prism-vertex}), $v$ has at most two neighbors in $K_{ab}$, it follows that $v$ has at most two neighbors in $K$. It remains to show that $v$ is of type branch with respect to $K$. If $v$ has at most one neighbor in $K_{ab}$ (and therefore in $K$), then this is immediate. So assume that $v$ has exactly two neighbors (call them $v_1$ and $v_2$) in $K_{ab}$ (and therefore in $K$). By Proposition~\ref{prism-vertex}, $v$ is of type branch with respect to $K_{ab}$. Consequently, we have that either $v_1,v_2 \in V(P_{ac}) \cup V(P_{ad})$, or $v_1,v_2 \in V(P_{bc}) \cup V(P_{bd})$, or $v_1,v_2 \in V(P_{cd})$. Let us assume that $v$ is not of type branch with respect to $K$. Then by symmetry, we may assume that $v_1 \in V(P_{bc})$ and $v_2 \in V(P_{bd})$. But now $v$ is not of type branch with respect to the prism $K_{cd}$, contrary to Proposition~\ref{prism-vertex}. This completes the argument.
\end{proof}

\begin{lemma} \label{K4-conn} Let $G$ be an \{ISK4,wheel,diamond\}-free trigraph, let $K$ be an induced subtrigraph of $G$ such that $K$ is a line trigraph of a chordless subdivision $H$ of $K_4$, and let $P$ be an inclusion-wise minimal connected induced subtrigraph of $G \smallsetminus V(K)$ such that $P$ is neither of type branch nor of type triangle with respect to $K$. Then $P$ is an augmenting path for $K$.
\end{lemma}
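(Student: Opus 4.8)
The plan is to reduce the statement to its prism analogue, Lemma~\ref{prism-conn}, applied to the sub-prisms $K_{xy}=K\smallsetminus V(P_{xy})$ that already appear in the proof of Proposition~\ref{K4-vertex}. I keep the notation of that proof: $a,b,c,d$ are the degree-three vertices of $H$, the four strong triangles are $T_a,T_b,T_c,T_d$, the six flat branches of $K$ are the pairwise vertex-disjoint paths $P_{xy}$, and each $K_{xy}$ is a prism. The engine of the reduction is a transfer principle, combined with the fact (Proposition~\ref{K4-vertex}) that every vertex of $G\smallsetminus V(K)$ is of type branch with respect to $K$.

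First I would establish the \emph{transfer principle}: every connected induced subtrigraph $Q$ of $G\smallsetminus V(K)$ that is of type branch or of type triangle with respect to $K$ is also of type branch or of type triangle with respect to each prism $K_{xy}$. This is a direct check on how the pieces of $K$ sit inside $K_{xy}$: each flat branch $P_{uv}\neq P_{xy}$ of $K$ is contained in a flat branch of $K_{xy}$; each triangle $T_z$ with $z\notin\{x,y\}$ survives as a strong triangle of $K_{xy}$; each of the two ``dissolved'' triangles $T_x,T_y$ has its two surviving vertices contained in a flat branch of $K_{xy}$; and $P_{xy}$ itself is deleted. In particular, by the minimality of $P$ with respect to $K$, every proper connected induced subtrigraph of $P$ is of type branch or of type triangle with respect to every $K_{xy}$.

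Next I would argue that, because $P$ is \emph{not} of type branch or of type triangle with respect to $K$, there is at least one pair $xy$ for which $P$ is not of type branch or of type triangle with respect to the prism $K_{xy}$: the attachment of $P$ over $K$ meets two distinct pieces that do not together fit into a single piece, and for any two distinct flat branches of $K$ (or a branch together with a triangle not containing it) there are enough of the six sub-prisms to separate them. Fixing such a pair, the transfer principle shows that $P$ is in fact inclusion-minimal with the property of being neither of type branch nor of type triangle with respect to $K_{xy}$, so Lemma~\ref{prism-conn} applies and yields that $P$ is an augmenting path for $K_{xy}$: it is a narrow path of length at least one, its interior is strongly anti-complete to $V(K_{xy})=V(K)\smallsetminus V(P_{xy})$, and each endpoint has exactly two strong neighbors that form a strong triangle with it and lie in a single flat branch of $K_{xy}$. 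By Proposition~\ref{K4-vertex} each endpoint of $P$ is of type branch with respect to $K$, so its two strong neighbors cannot straddle a dissolved triangle $T_x$ or $T_y$ (which would place them in two distinct flat branches of $K$); hence each endpoint's strong triangle already lies in a single flat branch of $K$, and the two flat branches so obtained are distinct because they sit inside distinct branches of the prism $K_{xy}$.

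The one genuinely new point, which I expect to be the main obstacle, is the deleted branch $P_{xy}$: a priori the interior of $P$ could have a neighbor on $P_{xy}$, so strong anti-completeness to $V(K_{xy})$ does not by itself give strong anti-completeness to all of $V(K)$. The cleanest route is to choose the witnessing sub-prism so that $V(P_{xy})$ is disjoint from the attachment of $P$ over $K$, in which case no vertex of $P$ touches $P_{xy}$ and the upgrade is automatic; the work is in showing such a choice is always available (equivalently, that a minimal $P$ cannot attach to all six flat branches). Failing that, given a putative neighbor of the interior on $P_{xy}$, I would pass to a second sub-prism that retains $P_{xy}$ and in which $P$ is again neither of type branch nor of type triangle, and apply Lemma~\ref{prism-conn} there to force the interior to be strongly anti-complete to that neighbor after all, a contradiction. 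Once the interior of $P$ is strongly anti-complete to all of $K$ and the two endpoint triangles are located in distinct flat branches of $K$, the definition of an augmenting path for $K$ is met, completing the proof.
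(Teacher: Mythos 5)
Your proposal is correct and follows essentially the same route as the paper: both reduce the statement to Lemma~\ref{prism-conn} applied to the sub-prisms $K_{xy}=K\smallsetminus V(P_{xy})$, using the fact that being of type branch or triangle transfers from $K$ to each $K_{xy}$, together with Proposition~\ref{K4-vertex} to control the endpoints. The only organizational difference is that the paper first localizes the attachment of $P$ inside the union of two pieces of $K$ (so that a single well-chosen sub-prism already contains the entire attachment and the deleted branch causes no trouble), whereas you apply Lemma~\ref{prism-conn} to an arbitrary ``bad'' sub-prism first and repair the deleted branch afterwards; both of your proposed repairs can be carried out.
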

\begin{proof}
Let vertices $a,b,c,d$ of $H$, strong triangles $T_a,T_b,T_c,T_d$ of $K$, narrow paths $P_{ab},P_{ac},P_{ad},P_{bc},P_{bd},P_{cd}$ of $K$, and prisms $K_{ab},K_{ac},K_{ad},K_{bc},K_{bd},K_{cd}$ be as in the proof of Proposition~\ref{K4-vertex}. We call $T_a$, $T_b$, $T_c$, $T_d$, $V(P_{ab})$, $V(P_{ac})$, $V(P_{ad})$, $V(P_{bc})$, $V(P_{bd})$, $V(P_{cd})$ the {\em pieces} of $K$. Set $T_a = \{v_{a,b},v_{a,c},v_{a,d}\}$, $T_b = \{v_{b,a},v_{b,c},v_{b,d}\}$, $T_c = \{v_{c,a},v_{c,b},v_{c,d}\}$, and $T_d = \{v_{d,a},v_{d,b},v_{d,c}\}$, so that for all distinct $x,y \in \{a,b,c,d\}$, the endpoints of $P_{xy}$ are $v_{x,y}$ and $v_{y,x}$.

We first show that $P$ is a narrow path. If not, then by Proposition~\ref{delete-three}, there exist distinct vertices $v_1,v_2,v_3 \in V(P)$ such that for all $i \in \{1,2,3\}$, $P \smallsetminus v_i$ is connected. By the minimality of $P$, there exist pieces $X_1,X_2,X_3$ of $K$ such that for all $i \in \{1,2,3\}$, the attachment of $P \smallsetminus v_i$ over $K$ is included in $X_i$, and $v_i$ has a neighbor (call it $y_i$) in $K \smallsetminus X_i$. Then $y_1 \in (X_2 \cap X_3) \smallsetminus X_1$, $y_2 \in (X_1 \cap X_3) \smallsetminus X_2$, and $y_3 \in (X_1 \cap X_2) \smallsetminus X_3$. Thus, $X_1$, $X_2$, and $X_3$ are pairwise distinct, and they pairwise intersect. But this is impossible because one easily sees by inspection that no three pieces of $K$ have this property. Thus, $P$ is a narrow path. Furthermore, since $P$ is not of type branch with respect to $K$, Proposition~\ref{K4-vertex} guarantees that $P$ is of length at least one. Let $p$ and $p'$ be the endpoints of $P$.

By the minimality of $P$, we know that there exists a piece $A$ of $P$ such that the attachment of $P \smallsetminus p'$ is included in $A$, and there exists a piece $A'$ of $P$ such that the attachment of $P \smallsetminus p$ over $K$ is included in $A'$. Then the attachment of $P$ over $K$ is included in $A \cup A'$, and the attachment of the interior of $P$ over $K$ is included in $A \cap A'$. Further, since $P$ is neither of type branch nor of type triangle with respect to $K$, we know that $A \neq A'$.

Let us show that the interior of $P$ is strongly anti-complete to $K$. Since the attachment of the interior of $P$ over $K$ is included in $A \cap A'$, we may assume that $A \cap A' \neq \emptyset$. Thus, $A$ and $A'$ are distinct pieces of $K$ that have a non-empty intersection. Then there exist distinct $x,y \in \{a,b,c,d\}$ such that one of $A$ and $A'$ is $V(P_{xy})$ and the other one is $T_x$; by symmetry, we may assume that $A = V(P_{ab})$ and $A' = T_a$, so that the attachment of $P$ over $K$ is included in $V(P_{ab}) \cup T_a$; in particular then, $P$ is strongly anti-complete to $P_{cd}$, and the attachment of the interior of $P$ over $K$ is included in $A \cap A' = \{v_{ab}\}$. Now consider the prism $K_{cd}$. We know that the attachment of $P$ over $K_{cd}$ is the same as the attachment of $P$ over $K$, and it is easy to see that $P$ is a minimal connected induced subtrigraph of $G \smallsetminus V(K)$ that is neither of type branch nor of type triangle with respect to $K_{cd}$. Then by Lemma~\ref{prism-conn}, $P$ is an augmenting path of $K_{cd}$, and in particular, the interior of $P$ is strongly anti-complete to $K_{cd}$, and therefore (since $P$ is strongly anti-complete to $P_{cd}$), the interior of $P$ is strongly anti-complete to $K$, as we had claimed.

We now have that the attachment of $p$ over $K$ is included in $A$, the attachment of $p'$ over $K$ is included in $A'$, and the interior of $P$ is strongly anti-complete to $K$. By Proposition~\ref{K4-vertex}, we know that both $p$ and $p'$ are of type branch, and so we may assume that $A$ and $A'$ are both vertex sets of flat branches of $K$. By symmetry, we may assume that $A = V(P_{ab})$ and $A' \in \{V(P_{ac}),V(P_{cd})\}$. Since $P$ is not of type branch with respect to $K$, we know that each of $p$ and $p'$ has a neighbor in $K$. Furthermore, if $A' = V(P_{ac})$, then either $p$ has a neighbor in $V(P_{ab}) \smallsetminus \{v_{a,b}\}$ or $p'$ has a neighbor in $V(P_{ac}) \smallsetminus \{v_{a,c}\}$, for otherwise, the attachment of $P$ over $K$ is included in $T_a$, contrary to the fact that $P$ is not of type triangle with respect to $K$.

Now, if $A' = V(P_{ac})$, then set $K' = K_{cd}$, and if $A' = V(P_{cd})$, then set $K' = K_{ac}$. It is easy to see that $P$ is a minimal connected induced subtrigraph of $G \smallsetminus K'$ that is neither of type branch nor of type triangle with respect to $K'$. By Lemma~\ref{prism-conn}, $P$ is an augmenting path for $K'$. Since the attachment of $p$ over $K$ is included in a flat branch of $K$, as is the attachment of $p'$ over $K$, and since the interior of $P$ is strongly anti-complete to $K$, we deduce that $P$ is an augmenting path for $K$ in $G$. This completes the argument.
\end{proof}

\begin{proposition} \label{prop-dist-flat-branch} Let $H$ be a chordless subdivision of a 3-connected graph $H_0$, and let $E \subseteq E(H)$. Assume that the edges in $E$ do not all belong to the same flat branch of $H$, and that some two edges of $E$ are vertex-disjoint. Then $H$ contains an induced subgraph $K$ such that $K$ is a chordless subdivision of $K_4$, and such that there exist vertex-disjoint edges $ab,cd \in E \cap E(K)$ that do not belong to the same flat branch of $K$.
\end{proposition}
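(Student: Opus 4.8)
The plan is to build the desired chordless subdivision of $K_4$ by applying Lemma~\ref{lemma48} to a suitable cycle and edge, after first reducing to a convenient pair of edges of $E$. I record three preliminary facts. First, by Lemma~\ref{lemma45}, $H$ is cyclically 3-connected; since $H_0$ is 3-connected it has minimum degree at least three, so the branch vertices of $H$ are exactly the vertices of $H_0$, and the branches of $H$ are exactly the paths into which the edges of $H_0$ are subdivided. In particular, the branches partition $E(H)$. Second, since each branch is an induced path and $H$ is chordless, a branch can fail to be flat only when it has length one (an edge whose endpoints have a common neighbor); equivalently, every branch of length at least two is flat, whereas a length-one branch contains a single edge. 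Third, chordlessness upgrades ``subgraph'' to ``induced subgraph'': if $K \subseteq H$ is a subdivision of $K_4$, then $K$ is 2-connected, so any two of its vertices lie on a common cycle $C'$ of $K$; an edge of $H$ joining two vertices of $K$ but missing from $E(K)$ would be non-consecutive on such a $C'$, hence a chord of the cycle $C'$ of $H$, which is impossible. Thus every subgraph of $H$ isomorphic to a subdivision of $K_4$ is automatically induced.

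Next I would reduce the problem to producing two vertex-disjoint edges of $E$ lying on \emph{different} branches of $H$. By hypothesis, some two edges $e_1,e_2 \in E$ are vertex-disjoint. If they lie on different branches, this is already the pair I want. Otherwise they lie on a common branch $F$; being vertex-disjoint, they force $F$ to have length at least three, so by the second preliminary fact $F$ is flat. Since the edges of $E$ do not all lie on one flat branch, some $e_3 \in E$ is not an edge of $F$. Because $F$ is an induced path, any edge of $H$ with both endpoints in $V(F)$ already lies in $E(F)$; hence $e_3$ has at most one endpoint on $F$, and as $e_1,e_2$ are disjoint edges of $F$, the edge $e_3$ meets at most one of them. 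Therefore $e_3$ is vertex-disjoint from $e_1$ or from $e_2$, giving a vertex-disjoint pair of edges of $E$ on two different branches.

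With a vertex-disjoint pair $e,f \in E$ on distinct branches in hand, I would first find a cycle through $e$ that avoids $f$. These two edges correspond to distinct edges $g_e,g_f$ of $H_0$; as a 3-connected graph $H_0$ is 3-edge-connected, so $H_0 - g_e - g_f$ is connected, and carrying this back through the subdivision shows $H - e - f$ is connected. In particular $e$ is not a bridge of the connected graph $H - f$, so $e$ lies on a cycle $C$ of $H - f$; thus $C$ and $f$ are edgewise disjoint. Lemma~\ref{lemma48} then produces a subgraph $K$ of $H$ containing $C$ and $f$ that is a subdivision of $K_4$; by the third preliminary fact $K$ is induced, and $e \in E(C) \subseteq E(K)$ and $f \in E(K)$, so $e,f \in E \cap E(K)$.

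It remains to check that $e$ and $f$ lie on different branches of $K$, which I expect to be the crux. Suppose instead they lie on a common branch $B$ of $K$. As $e \neq f$, the branch $B$ has length at least two, so at least one endpoint of $e$ is an interior vertex of $B$ and hence has degree two in $K$. The cycle $C \subseteq K$ contains $e$ and so passes through this degree-two vertex; but a cycle entering the interior of a branch must traverse the whole branch, since every interior vertex of $B$ has both its $K$-neighbors on $B$. Thus $B \subseteq C$, forcing $f \in E(B) \subseteq E(C)$, contradicting $f \notin E(C)$. Hence $e$ and $f$ belong to distinct branches of $K$, so no branch of $K$ contains both, and $e,f$ witness the conclusion. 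The only genuine subtleties are this branch-separation step and the reduction of the second paragraph, where the flatness characterization and the induced-path property of branches convert the two hypotheses on $E$ into a disjoint pair on different branches.
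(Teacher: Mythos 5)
Your proof is correct, but it reaches the subdivision of $K_4$ by a genuinely different route than the paper. Both arguments share the same reduction step: starting from a vertex-disjoint pair of edges of $E$, if they lie on one flat branch $F$, a third edge of $E$ off $F$ must be disjoint from one of them, yielding a vertex-disjoint pair on distinct branches. (The paper first proves that every edge of $H_0$ is subdivided at least once, so that \emph{all} branches of $H$ are flat and no two share more than one vertex; you instead sidestep this by observing that a branch containing two vertex-disjoint edges has length at least two and is therefore automatically flat, which is a correct shortcut.) The divergence is in the construction of $K$: the paper takes a cycle $Z$ through \emph{both} edges, applies Lemma~\ref{lemma47} to obtain a crossing path $R$ turning $Z\cup R$ into a theta, and then applies Lemma~\ref{lemma43} to grow the theta into a subdivision of $K_4$ in which the two edges visibly lie on different branches of the theta. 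You instead take a cycle $C$ through $e$ that \emph{avoids} $f$ (justified via $3$-edge-connectivity of $H_0$ and hence connectedness of $H-e-f$), apply Lemma~\ref{lemma48} to $C$ and $f$ directly, and then rule out a common branch $B$ of $K$ by noting that $C$, passing through a degree-two interior vertex of $B$, would have to contain all of $B$ and hence $f$. Your route uses one cited lemma instead of two, and the final branch-separation argument is self-contained and clean; the paper's route has the advantage that the two edges land on different branches of the theta by construction, with no separate verification needed. Both correctly conclude that $K$ is chordless and induced from the chordlessness of $H$, and landing on different branches of $K$ is indeed (formally stronger than) what the statement asks for.
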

\begin{proof}
We first observe that each edge of $H_0$ was subdivided at least once to obtain $H$. For suppose that some edge $uv$ of $H_0$ remained unsubdivided in $H$. Since $H_0$ is 3-connected, Menger's theorem guarantees that there are three internally disjoint paths between $u$ and $v$ in $H_0$. At least two of those paths do not use the edge $uv$, and by putting them together, we obtain a cycle $Z_0$ of $H_0$; clearly, the edge $uv$ is a chord of $Z_0$ in $H_0$. Now, a subdivision $Z$ of $Z_0$ is a cycle of $H$, and since $uv$ remained unsubdivided in $H$, we see that $uv$ is a chord of the cycle $Z$ in $H$, contrary to the fact that $H$ is chordless. Thus, no edge of $H_0$ remained unsubdivided in $H$. Note that this implies that $H$ is triangle-free, and consequently, that all branches of $H$ are flat.

Next, note that the branch vertices of $H$ are precisely the vertices of $H_0$. Furthermore, the flat branches of $H$ are precisely the paths of $H$ that were obtained by subdividing the edges of $H_0$. This implies that every edge of $H$ belongs to a unique flat branch of $H$, and it also implies that no two distinct flat branches of $H$ share more than one vertex. Finally, the branch vertices of $H$ (that is, the vertices of $H_0$) all belong to more than one flat branch of $H$.

We now claim that there exist vertex-disjoint edges $ab,cd \in E$ such that $ab$ and $cd$ do not belong to the same flat branch of $H$. By hypothesis, there exist vertex-disjoint edges $ab,a'b' \in E$. If $ab$ and $a'b'$ do not belong to the same flat branch of $H$, then we set $c = a'$ and $d = b'$, and we are done. Suppose now that $ab$ and $a'b'$ do belong to the same flat branch (call it $F$) of $H$. Since edges in $E$ do not all belong to the same flat branch of $H$, there exists an edge $cd \in E$ such that $cd$ does not belong to the flat branch $F$. Since every edge of $H$ belongs to a unique flat branch of $H$, and since two distinct flat branches of $H$ have at most one vertex in common, this implies that $cd$ is vertex-disjoint from at least one of $ab$ and $a'b'$; by symmetry, we may assume that $cd$ is vertex-disjoint from $ab$. This proves our claim. 

So far, we have found vertex-disjoint edges $ab,cd \in E$ such that $ab$ and $cd$ do not belong to the same flat branch of $H$. Since $H$ is a subdivision of a 3-connected graph, we know that $H$ contains a cycle $Z$ such that $ab,cd \in E(Z)$; since $H$ is chordless, the cycle $Z$ of $H$ is induced. By symmetry, we may assume that the vertices $a,b,c,d$ appear in this order in $Z$. Since $H$ is a subdivision of a 3-connected graph, Lemma~\ref{lemma45} implies that $H$ is cyclically 3-connected. Let $P_{ad}$ be the subpath of $Z$ such that the endpoints of $P_{ad}$ are $a$ and $d$, and $b,c \notin V(P_{ad})$, and let $P_{bc}$ be defined in an analogous fashion. By Lemma~\ref{lemma47}, there exists a path $R$ of $H$ such that one endpoint of $R$ belongs to $P_{ad}$, the other endpoint of $R$ belongs to $P_{bc}$, no internal vertex of $R$ belongs to $Z$, and $R$ is not between $a$ and $b$ or between $c$ and $d$. Since $H$ is chordless, the path $R$ is of length at least two. Clearly, $Z \cup R$ is a theta, and since $H$ is chordless, this theta is an induced subgraph of $H$. Further, since $H$ is a subdivision of a 3-connected graph, we know that $H$ is not a theta, and so Lemma~\ref{lemma43} implies that there exists a subgraph $K$ of $H$ such that $K$ is a subdivision of $K_4$, and the theta $Z \cup R$ is a subgraph of $K$; since $H$ is chordless, we know that $K$ is in fact a chordless subdivision of $K_4$ and an induced subgraph of $H$, and that the theta $Z \cup R$ is an induced subgraph of $K$. Since the edges $ab$ and $cd$ do not belong to the same flat branch of the theta $Z \cup R$, we know that $ab$ and $cd$ do not belong to the same flat branch of $K$. This completes the argument.
\end{proof}

\begin{lemma} \label{cyclic3-conn} Let $G$ be an \{ISK4,wheel,diamond\}-free trigraph, let $K$ be an induced subtrigraph of $G$ such that $K$ is a line trigraph of a cyclically 3-connected, chordless graph $H$ of maximum degree at most three, and let $P$ be an inclusion-wise minimal connected induced subtrigraph of $G \smallsetminus V(K)$ such that $P$ is neither of type triangle nor of type branch with respect to $K$. Then $P$ is an augmenting path for $K$ in $G$.
\end{lemma}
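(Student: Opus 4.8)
The plan is to imitate the proof of Lemma~\ref{K4-conn}, reducing every local question about the attachment of $P$ to the already-settled prism case (Lemma~\ref{prism-conn}, Proposition~\ref{prism-vertex}) and chordless-$K_4$-subdivision case (Lemma~\ref{K4-conn}, Proposition~\ref{K4-vertex}). First I would dispose of a degenerate case: by Lemma~\ref{lemma45}, $H$ is either a theta or a subdivision of a $3$-connected graph. If $H$ is a theta, then $K$ is a prism (the line trigraph of a theta is a prism: the two degree-three vertices of the theta give the two triangle pieces and the three internally disjoint paths give the three branch pieces), and the conclusion is exactly Lemma~\ref{prism-conn}. So I would assume $H$ is a chordless subdivision of a $3$-connected graph $H_0$. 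As in the proof of Proposition~\ref{prop-dist-flat-branch}, every edge of $H_0$ is subdivided at least once, so $H$ is triangle-free, all of its branches are flat, its branch vertices are exactly the (degree-three) vertices of $H_0$, the strong triangles of $K$ are exactly the sets $T_v$ of the three edges incident with a branch vertex $v$, and the flat branches of $K$ are exactly the edge sets of the branches of $H$. I will call these the \emph{pieces} of $K$; the key structural fact is that no three distinct pieces of $K$ pairwise intersect (two distinct triangles, or two distinct flat branches, are vertex-disjoint, and a triangle $T_v$ meets a flat branch in at most the edge of that branch incident with $v$).

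The crucial preliminary step is a single-vertex analogue of Propositions~\ref{prism-vertex} and~\ref{K4-vertex}: every $v\in V(G)\smallsetminus V(K)$ has at most two neighbors in $K$ and is of type branch with respect to $K$. I would prove this by reduction to the $K_4$-subdivision case. Write the attachment of $v$ as an edge set $E_v\subseteq E(H)$, and suppose $v$ is not of type branch, so that $E_v$ is not contained in a single branch of $H$. One genuinely new configuration is that $v$ is adjacent to two edges sharing a branch vertex $u$ but lying on different branches; here I would take a cycle of $H$ through the two corresponding edges at $u$ (using $2$-connectivity) and, since the third edge at $u$ is edgewise disjoint from this cycle, invoke Lemma~\ref{lemma48} to obtain an induced chordless subdivision $H^*$ of $K_4$ in which $u$ is a branch vertex; then $K^*=K[E(H^*)]$ is a line trigraph of $H^*$ in which $v$ has two neighbors in the triangle $T_u$ lying on different branches, so $v$ is not of type branch with respect to $K^*$, contradicting Proposition~\ref{K4-vertex}. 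Once this configuration is excluded, the fact that $H$ is triangle-free forces the remaining obstruction to be a pair of vertex-disjoint edges of $E_v$ lying on different branches, which is handled directly by Proposition~\ref{prop-dist-flat-branch} together with Proposition~\ref{K4-vertex}; the bound of two neighbors follows by the same kind of reduction, since three consecutive edges of one branch in $E_v$ already yield a diamond.

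With this claim in hand the rest follows the template of Lemma~\ref{K4-conn}. Using Proposition~\ref{delete-three}, the minimality of $P$, and the fact that no three pieces pairwise intersect, $P$ is a narrow path, and it has length at least one because every single vertex is of type branch. Minimality then gives distinct pieces $A,A'$ of $K$ with the attachment of $P\smallsetminus p'$ contained in $A$ and that of $P\smallsetminus p$ contained in $A'$, so the attachment of $P$ lies in $A\cup A'$ and that of the interior in $A\cap A'$. To show the interior is strongly anti-complete to $K$, I would suppose $A\cap A'\neq\emptyset$; then $\{A,A'\}=\{T_v,F\}$ for a branch $F$ with endpoint $v$, and $A\cap A'$ is a single vertex $w$. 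I would build, via a cycle through $v$ (chosen to run along $F$ and to contain a witnessing edge on the $F$-side) and Lemma~\ref{lemma48}, an induced chordless $K_4$-subdivision $H^*$ with $v$ a branch vertex and $F$ inside it, so that $w$ and witnesses on both sides lie in $K^*=K[E(H^*)]$. Minimality of $P$ with respect to $K$ transfers automatically to $K^*$ (any proper connected subtrigraph has its $K$-attachment inside one piece $X$, and $X\cap V(K^*)$ lies in one piece of $K^*$), so $P$ is minimal of the required kind with respect to $K^*$; by Lemma~\ref{K4-conn} it is then an augmenting path for $K^*$, whose interior is strongly anti-complete to $V(K^*)\ni w$, contradicting that the interior has a neighbor at $w$.

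Finally, once the interior is strongly anti-complete to $K$, the single-vertex claim lets me take $A$ and $A'$ to be flat branches $F_e,F_{e'}$ of $K$, since the endpoints are of type branch and their attachments are confined to $F_e$ and $F_{e'}$. Since $P$ is neither of type branch nor of type triangle, its attachment contains (by the triangle-free intersecting-family argument above) two vertex-disjoint edges not lying on a common branch, so Proposition~\ref{prop-dist-flat-branch} yields an induced chordless $K_4$-subdivision $H^*$ whose line trigraph $K^*$ witnesses that $P$ is neither of type branch nor of type triangle with respect to $K^*$; by the automatic minimality transfer and Lemma~\ref{K4-conn}, $P$ is an augmenting path for $K^*$. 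This already gives that $p$ and $p'$ each have exactly two neighbors in $K^*$, forming strong triangles; the single-vertex bound of two neighbors forces these to be \emph{all} of their neighbors in $K$, and since the attachments of $p$ and $p'$ lie in the flat branches $F_e$ and $F_{e'}$ of $K$, we conclude that $P$ is an augmenting path for $K$. I expect the main obstacle to be the repeated construction of an induced chordless $K_4$-subdivision of $H$ that captures the prescribed pieces and witnesses with the correct branch/triangle roles, so that $P$ stays neither of type branch nor of type triangle and the augmenting structure lifts back to $K$; the cyclic-$3$-connectivity Lemmas~\ref{lemma47},~\ref{lemma48}, and~\ref{lemma43}, packaged through Proposition~\ref{prop-dist-flat-branch}, are exactly the tools for this, while the transfer of minimality is pleasantly automatic and the two-neighbor bound is what makes the final lift go through.
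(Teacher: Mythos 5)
Your proposal is correct, but it takes a noticeably different route through the argument than the paper does, even though both rest on the same pillars: the theta/prism dichotomy of Lemma~\ref{lemma45} together with Lemma~\ref{prism-conn}, the reduction to chordless $K_4$-subdivisions via Proposition~\ref{prop-dist-flat-branch} and Lemma~\ref{K4-conn}, and Lemma~\ref{lemma48} for capturing prescribed edges. The paper applies Proposition~\ref{prop-dist-flat-branch} once to the attachment of $P$, obtains from Lemma~\ref{K4-conn} that $P$ is an augmenting path for the resulting $K'$, and then disposes of the only remaining worry --- that $P$ might attach to $K$ outside $V(K')$ --- in a single stroke: an extra attachment vertex $xy$, together with the cycle $Z'$ carrying the four attachment vertices in $K'$, would via Lemma~\ref{lemma48} yield a $K_4$-subdivision $K''$ for which $P$ would again have to be an augmenting path while having an attachment of size at least five, which is absurd. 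You instead redo the local structural analysis at the level of $K$ itself: you prove a single-vertex attachment proposition for general cyclically $3$-connected $H$ (an analogue of Propositions~\ref{prism-vertex} and~\ref{K4-vertex} that the paper never states), re-derive that $P$ is a narrow path with strongly anti-complete interior, and only then invoke Proposition~\ref{prop-dist-flat-branch}, using your two-neighbour bound on the endpoints to lift the augmenting-path structure from $K^*$ back to $K$. Your route is longer and requires several ad hoc constructions of chordless $K_4$-subdivisions capturing prescribed edges with prescribed branch/triangle roles (each of which does go through, by choosing a cycle through the relevant branch, or through two prescribed edges at a branch vertex, and then applying Lemma~\ref{lemma48}); this is precisely the bookkeeping the paper's ``excess attachment'' trick avoids. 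On the other hand, your route yields the single-vertex proposition as a reusable byproduct. One small correction: your parenthetical justification of the two-neighbour bound (``three consecutive edges of one branch already yield a diamond'') only covers the case where the three neighbours are consecutive on the branch; the clean argument is the one you also gesture at, namely embedding the whole branch in a chordless $K_4$-subdivision via a cycle and Lemma~\ref{lemma48} and then quoting the bound of Proposition~\ref{K4-vertex} (or observing that three neighbours on a hole through that branch give a wheel).
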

\begin{proof}
We first observe that $P$ is a non-null trigraph. Indeed, since $H$ is cyclically 3-connected, $H$ contains a vertex of degree at least three (in fact, since the maximum degree of $H$ is at most three, we know that $H$ contains a vertex of degree exactly three). Since $K$ is a line trigraph of $H$, it follows that $K$ contains a strong triangle. Clearly, the (empty) attachment of the null trigraph over $K$ in $G$ is included in this strong triangle of $K$, and so since $P$ is not of type triangle, we see that $P$ is non-null.

If $H$ is a theta, then $K$ is prism, and the result follows immediately from Lemma~\ref{prism-conn}. So assume that $H$ is not a theta; then by Lemma~\ref{lemma45}, $H$ is a subdivision of a 3-connected graph. Since $H$ is chordless, we know that $H$ is in fact a chordless subdivision of a 3-connected graph. Now, since $P$ is neither of type triangle nor of type branch with respect to $K$, we know that some two edges of $H$ (equivalently: vertices of $K$) in the attachment of $P$ over $K$ are vertex-disjoint. Further, since $P$ is not of type branch with respect to $K$, we know that the edges of $H$ (equivalently: vertices of $K$) in the attachment of $P$ over $K$ do not all belong to the same flat branch of $H$. Let $E$ be the attachment of $P$ over $K$. Proposition~\ref{prop-dist-flat-branch} now implies that $H$ contains an induced subgraph $H'$ such that $H'$ is a chordless subdivision of $K_4$, and such that there exist vertex-disjoint edges $ab,cd \in E \cap E(H')$ that do not belong to the same flat branch of $H'$. Let $K' = K[E(H')]$. Since the vertices $ab$ and $cd$ of $K$ belong to the attachment of $P$ over $K'$, we easily deduce that $P$ is a minimal connected induced subtrigraph of $G \smallsetminus V(K')$ such that $P$ is neither of type triangle nor of type branch with respect to $K'$. By Lemma~\ref{K4-conn} then, we know that $P$ is an augmenting path for $K'$.

If $K = K'$, then $P$ is an augmenting path for $K$, and we are done. So assume that $V(K') \subsetneqq V(K)$. Let $A$ be the attachment of $P$ over $K'$. Since $P$ is an augmenting path for $K'$, we know that $|A| = 4$, and furthermore, there exists a cycle $Z'$ of $H'$ (since $H'$ is chordless, the cycle $Z'$ is induced) such that $A \subseteq E(Z')$. Now, we claim that $A$ is the attachment of $P$ over $K$. Suppose otherwise. Fix $xy \in E(H) \smallsetminus A$ such that $xy$ is in the attachment of $P$ over $K$. We now apply Lemma~\ref{lemma48} to the graph $H$, the cycle $Z'$, and the edge $xy$, and we deduce that $H$ contains a subdivision $H''$ of $K_4$ that contains $Z'$ and $xy$; since $H$ is chordless, so if $H''$. Set $K'' = K[E(H'')] = G[E(H'')]$. Since the attachment of $P$ over $K''$ contains at least five vertices (because it includes $A \cup \{xy\}$), we know that $P$ is not of type triangle with respect to $K$. Further, because $A \subseteq E(Z')$, and $xy \notin E(Z')$, we deduce that $A \cup \{xy\}$ is not included in a flat branch of $K[E(H'')]$, and so $P$ is not of type branch. We now deduce that $P$ is a minimal connected induced subgraph of $G \smallsetminus V(K'')$ such that $P$ is neither of type branch nor of type triangle with respect to $K''$ (the minimality of $P$ with respect to $K''$ follows from the minimality of $P$ with respect to $K$). Thus, by Lemma~\ref{K4-conn}, $P$ is an augmenting path for $K''$. But this is impossible because the attachment of $P$ over $K''$ contains at least five vertices. Thus, the attachment of $P$ over $K$ is precisely $A$.

So far, we have shown that $P$ is an augmenting path for $K'$, and that the attachment of $P$ over $K$ is the same as the attachment of $P$ over $K'$. In order to show that $P$ is augmenting path for $K$, it now only remains to show that neither of the two strongly adjacent pairs of $K$ that belong to the attachment of $P$ over $K$ belongs to any triangle of $K$. But this follows immediately from the fact that $G$ is diamond-free.
\end{proof}

\begin{proposition} \label{max-cyclic3} Let $G$ be an \{ISK4,wheel,diamond\}-free trigraph, let $K$ be an inclusion-wise maximal induced subtrigraph of $G$ such that $K$ is a line trigraph of a cyclically 3-connected, chordless graph $H$ of maximum degree at most three. Then every component of $G \smallsetminus V(K)$ is either of type branch or of type triangle with respect to $K$.
\end{proposition}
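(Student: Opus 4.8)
The plan is to argue by contradiction, using Lemma~\ref{cyclic3-conn} to produce an augmenting path and then ``absorbing'' it into $K$ so as to contradict the maximality of $K$. Suppose some component of $G \smallsetminus V(K)$ is neither of type branch nor of type triangle with respect to $K$. Then, among all connected induced subtrigraphs of $G \smallsetminus V(K)$ that are neither of type branch nor of type triangle with respect to $K$, I would fix an inclusion-wise minimal one, call it $P$; such a $P$ exists because the offending component is itself such a subtrigraph. By Lemma~\ref{cyclic3-conn}, $P$ is an augmenting path for $K$ in $G$. Let $a,b$ be its endpoints, and let $F_a,F_b$ be the two distinct flat branches of $K$ witnessing that $P$ is augmenting: thus $a$ (resp.\ $b$) has exactly two neighbors $a_1,a_2 \in V(F_a)$ (resp.\ $b_1,b_2 \in V(F_b)$), the set $\{a,a_1,a_2\}$ (resp.\ $\{b,b_1,b_2\}$) is a strong triangle, and $a_1a_2$ (resp.\ $b_1b_2$) is a strongly adjacent pair of $F_a$ (resp.\ $F_b$).

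Next I translate this into an operation on $H$. Since $K$ is a line trigraph of $H$, the vertices of $K$ are the edges of $H$, the flat branches of $K$ correspond to the branches of $H$, and the strongly adjacent pair $a_1a_2$ of $F_a$ corresponds to two consecutive edges of the branch $B_a$ of $H$ associated with $F_a$; these two edges share a vertex $w$ that is an interior vertex of $B_a$ and hence has degree two in $H$. Define $w' \in V(H)$ analogously from $b_1b_2$. Because $F_a \neq F_b$, the branches $B_a$ and $B_b$ are distinct, and since an interior vertex of a branch lies on no other branch, $w$ and $w'$ are distinct and no branch of $H$ contains both. Reading $P$ through the line-trigraph correspondence, each vertex of $P$ is a new edge, and $P$ (a narrow path whose interior is strongly anti-complete to $K$, attaching at $a$ and $b$ as above) corresponds to a path $N$ of length $|V(P)| \geq 2$ in a larger graph $H^+$ that joins $w$ to $w'$, is internally disjoint from $V(H)$, and has all interior vertices of degree two. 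In other words, $H^+$ is obtained from $H^\circ := (V(H), E(H) \cup \{ww'\})$ by subdividing the edge $ww'$ at least once.

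I would then verify the three structural properties of $H^+$ and conclude. For the maximum degree, the operation only raises ${\rm deg}_H(w) = {\rm deg}_H(w') = 2$ to three and introduces interior vertices of degree two, so $H^+$ still has maximum degree at most three. For cyclic $3$-connectivity, since $H$ is cyclically $3$-connected and no branch of $H$ contains both $w$ and $w'$, Lemma~\ref{lemma46} gives that $H^\circ$ is cyclically $3$-connected and that every graph obtained from $H^\circ$ by subdividing $ww'$ is cyclically $3$-connected; as $H^+$ is exactly such a subdivision, it is cyclically $3$-connected. Finally, I would check that $K \cup P := G[V(K) \cup V(P)]$ is a line trigraph of $H^+$: its full realization is $L(H^+)$ (the only new adjacencies, read off from the attachments, are those among the new edges and between the new edges and the old edges at $w$ and $w'$), and all its triangles are strong (those inside $K$ by hypothesis, and the only new ones, $\{a,a_1,a_2\}$ and $\{b,b_1,b_2\}$, because $P$ is augmenting). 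Chordlessness of $H^+$ then comes for free: $K \cup P$ is an induced subtrigraph of the wheel-free trigraph $G$, hence wheel-free, and a wheel-free line trigraph of $H^+$ forces $H^+$ to be chordless (as observed just before Proposition~\ref{K4-vertex}). Thus $K \cup P$ is a line trigraph of a cyclically $3$-connected, chordless graph of maximum degree at most three; since $P$ is an augmenting path, it is non-null and disjoint from $K$, so $V(K) \subsetneq V(K) \cup V(P)$, contradicting the maximality of $K$.

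The main obstacle I expect is setting up the line-trigraph dictionary precisely enough to define $H^+$ and to confirm that $K \cup P$ is a line trigraph of it: one must check that the shared vertex $w$ of $a_1,a_2$ genuinely has degree two in $H$ (so that adding the new edge keeps the maximum degree at most three and keeps the hypothesis of Lemma~\ref{lemma46} satisfied), and that no unwanted adjacencies appear in the full realization of $K \cup P$. The point that avoids substantial case analysis is deferring chordlessness of $H^+$ to the wheel-freeness of $G$ rather than verifying it directly on $H^+$, where cycles passing through the new path $N$ could a priori inherit a chord.
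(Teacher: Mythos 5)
Your proposal is correct and follows essentially the same route as the paper: take a minimal offending connected induced subtrigraph, apply Lemma~\ref{cyclic3-conn} to get an augmenting path, attach a correspondingly subdivided edge between the two degree-two vertices of $H$, invoke Lemma~\ref{lemma46} for cyclic 3-connectivity, and derive chordlessness from wheel-freeness of the enlarged line trigraph, contradicting maximality of $K$. Your explicit verification that no branch of $H$ contains both $w$ and $w'$ is a detail the paper leaves implicit, but otherwise the arguments coincide.
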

\begin{proof}
We prove a slightly stronger statement: every connected induced subtrigraph of $G \smallsetminus V(K)$ is either of type branch or of type triangle with respect to $K$. Suppose otherwise, and let $P$ be a minimal induced subtrigraph of $G \smallsetminus V(K)$ that is neither of type branch nor of type triangle with respect to $K$. Then by Lemma~\ref{cyclic3-conn}, $P$ is an augmenting path for $K$ in $G$. Now we show that $G[V(K) \cup V(P)]$ is a line trigraph of a cyclically 3-connected, chordless graph $H$ of maximum degree at most three (this will contradict the maximality of $K$). Since $K$ is a line trigraph, we know that every triangle of $K$ is strong, and since $P$ is an augmenting path for $K$, we also know that the two triangles formed by the endpoints of $P$ and their attachments over $K$ are strong. It follows that all triangles of $G[V(K) \cup V(P)]$ are strong, and we need only show that the full realization of $G[V(K) \cup V(P)]$ is the line graph of a cyclically 3-connected, chordless graph $H$ of maximum degree at most three. Thus, we may assume that $G[V(K) \cup V(P)]$ contains no semi-adjacent pairs, that is, that $G[V(K) \cup V(P)]$ is a graph. Let $p$ and $p'$ be the endpoints of $P$.

Let $\{ab,bc\} \subseteq E(H) = V(K)$ be the attachment of $p$ over $K$, and let
$\{a'b',b'c'\} \subseteq E(H) = V(K)$ be the attachment of $p'$ over $K$. Since $p$ is of type branch with respect to $K$ (because $P$ is an augmenting path for $K$), we know that the vertices $ab$ and $bc$ of $K$ belong to the same flat branch of $K$, and consequently, that the vertex $b$ (of $H$) is of degree two in $H$.
Similarly, the vertex $b'$ (of $H$) is of degree two in $H$. Let $H'$ be the graph obtained by adding to $H$ a path $R$ between $b$ and $b'$ of length one more than the length of $P$. It is then easy to see that $G[V(K) \cup V(P)]$ is the line graph of $H'$. The fact that $H'$ is cyclically 3-connected follows from
Lemma~\ref{lemma46}, and the fact that $H'$ is chordless and of maximum degree at most three follows from the fact that its line graph is \{ISK4,wheel\}-free. Thus, $G[V(K) \cup V(P)]$ contradicts the maximality of $K$. This completes the argument.
\end{proof}

\begin{lemma} \label{cyclic3-decomp} Let $G$ be an \{ISK4,wheel,diamond\}-free trigraph such that some induced subtrigraph of $G$ is a line trigraph of a cyclically 3-connected, chordless graph of maximum degree at most three. Then either $G$ is a line trigraph of a cyclically 3-connected, chordless graph of maximum degree at most three, or $G$ admits a clique-cutset or a stable 2-cutset.
\end{lemma}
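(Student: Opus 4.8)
The plan is to take $K$ to be an inclusion-wise maximal induced subtrigraph of $G$ that is a line trigraph of a cyclically 3-connected, chordless graph $H$ of maximum degree at most three; such a $K$ exists by hypothesis. First I would dispose of the trivial cases: if $G$ is disconnected then $\emptyset$ is a clique-cutset, so I may assume $G$ is connected; and if $V(K)=V(G)$ then $G$ itself realizes the first outcome. So suppose $V(K)\subsetneq V(G)$ and fix a component $D$ of $G\smallsetminus V(K)$. Since $G$ is connected and $V(K)\neq\emptyset$, the attachment $N$ of $D$ over $K$ is non-empty, and by Proposition~\ref{max-cyclic3} the component $D$ is of type branch or of type triangle with respect to $K$. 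The key observation used throughout is that, because $D$ is a \emph{component} of $G\smallsetminus V(K)$, every neighbor of $D$ outside $D$ lies in $N$; hence deleting $N$ isolates $D$ from $V(K)\smallsetminus N$, which is non-empty because $V(K)$ has more than three vertices. In the type-triangle case this finishes the argument at once: $N$ is contained in a strong triangle of $K$, hence $N$ is a strong clique, and by the observation it is a cutset, so $N$ is a clique-cutset.

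The substantial case is when $D$ is of type branch, with attachment contained in a flat branch $F=f_0-\dots-f_k$ of $K$. I would first record the structural facts I need: each interior vertex $f_i$ (with $0<i<k$) has degree exactly two in $K$, so it lies in no triangle of $K$ and on no flat branch other than $F$; and $V(K)\smallsetminus V(F)\neq\emptyset$, since otherwise $H$ would be a path, contradicting cyclic 3-connectivity. Rather than work with $D$ alone, I would group together the family $\mathcal{D}$ of \emph{all} components of $G\smallsetminus V(K)$ whose attachment is contained in $V(F)$, let $N^{*}\subseteq V(F)$ be the union of their attachments, and let $\alpha\le\beta$ be the smallest and largest indices occurring in $N^{*}$. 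The crux is the local claim that any component of $G\smallsetminus V(K)$ having a neighbor among the interior vertices $f_{\alpha+1},\dots,f_{\beta-1}$ must itself lie in $\mathcal{D}$: such a vertex $f_i$ has degree two, so by Proposition~\ref{max-cyclic3} a component touching it cannot be of type triangle, and its unique incident flat branch must be $F$. Consequently the set $M=\big(\bigcup\mathcal{D}\big)\cup\{f_{\alpha+1},\dots,f_{\beta-1}\}$ has all of its outside-neighbors contained in $\{f_\alpha,f_\beta\}$, so $\{f_\alpha,f_\beta\}$ (or $\{f_\alpha\}$ when $\alpha=\beta$) separates the non-empty set $M$ from $V(K)\smallsetminus V(F)$. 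It then remains only to classify this small cutset: if $\alpha=\beta$ it is a cut-vertex, hence a clique-cutset; if $\beta\ge\alpha+2$ the vertices $f_\alpha,f_\beta$ are non-consecutive on the narrow path $F$, hence strongly anti-adjacent, giving a stable 2-cutset; and if $\beta=\alpha+1$ the pair $f_\alpha f_\beta$ is an adjacent pair of $F$, yielding a clique-cutset when it is strongly adjacent and a stable 2-cutset when it is semi-adjacent (recall that a semi-adjacent pair is simultaneously adjacent and anti-adjacent, so in particular a stable set of size two).

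I expect the main obstacle to be precisely this type-branch case, and specifically the temptation to use the two extreme attachment vertices of the \emph{single} component $D$. That naive cutset can fail, because a different component attaching to an interior degree-two vertex of $F$ could also reach outside the interval $[f_\alpha,f_\beta]$ and thereby short-circuit the separation; passing to the whole family $\mathcal{D}$ and invoking the degree-two structure (no incident triangles, a unique incident flat branch) is exactly what makes the separation valid. A secondary point, which requires care but causes no genuine difficulty, is the bookkeeping that a semi-adjacent pair counts as a stable set of size two, so that the ``consecutive'' subcase always lands in one of the two permitted cutset outcomes.
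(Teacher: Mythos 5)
Your proof is correct and follows essentially the same route as the paper's: take an inclusion-wise maximal such $K$, invoke Proposition~\ref{max-cyclic3}, turn a type-triangle attachment into a clique-cutset, and extract a clique-cutset or stable 2-cutset from within a flat branch in the type-branch case. The only difference is cosmetic: the paper uses the two endpoints of the flat branch as the cutset (asserting without detail that they separate $G$), whereas you use the extreme attachment vertices of the family of components attaching inside that branch and explicitly verify, via the degree-two structure of the interior vertices, that no other component can short-circuit the separation --- a slightly more careful write-up of the same argument rather than a different one.
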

\begin{proof}
Let $K$ be a maximal induced subgraph of $G$ that is a line trigraph of a cyclically 3-connected, chordless graph of maximum degree at most three. If $G = K$, then we are done. So assume that $V(K) \subsetneqq V(G)$. We may also assume that $G$ does not admit a clique-cutset, for otherwise we are done. (In particular then, $G$ is connected, and $G$ does not contain a cut-vertex.) Note that this implies that no component of $G \smallsetminus V(K)$ is of type triangle with respect to $K$, for otherwise, the attachment of such a component over $K$ would be a clique-cutset of $G$. Proposition~\ref{max-cyclic3} now guarantees that all components of $G \smallsetminus V(K)$ are of type branch with respect to $K$. Since $V(K) \subsetneqq V(G)$, and since every component of $G \smallsetminus V(K)$ is of type branch with respect to $K$, we know that there exists some flat branch $B$ of $K$ such that the attachment of some component of $G \smallsetminus V(K)$ is included in $V(B)$. Let $b$ and $b'$ be the endpoints of $B$. Then $\{b,b'\}$ is a cutset of $G$. If $bb'$ were a strongly adjacent pair, then $\{b,b'\}$ would be a clique-cutset of $G$, contrary to the fact that $G$ admits no clique-cutset. So $bb'$ is an anti-adjacent pair, and it follows that $\{b,b'\}$ is a stable 2-cutset.
\end{proof}

\subsection{Decomposing \{ISK4,wheel\}-free graphs}

We remind the reader that a trigraph is {\em series-parallel} if its full realization is series-parallel. We also remind the reader that a {\em complete bipartite trigraph} is a trigraph whose vertex set can be partitioned into two strongly stable sets, strongly complete to each other; if these two strongly stable sets are each of size at least three, then the complete bipartite trigraph is said to be {\em thick}. A trigraph is a {\em strong $K_{3,3}$} if its full realization is a $K_{3,3}$ and it contains no semi-adjacent pairs. (Clearly, a strong $K_{3,3}$ is a thick complete bipartite trigraph.)

\begin{proposition} \label{prism-realization} Let $G$ be an \{ISK4,wheel\}-free trigraph whose full realization contains a prism as an induced subgraph. Then $G$ contains a prism as an induced subtrigraph.
\end{proposition}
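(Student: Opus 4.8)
The plan is to locate the desired induced prism on the \emph{same} vertex set that already carries a prism in the full realization. Let $\widetilde{G}$ denote the full realization of $G$, let $\widetilde{K}$ be an induced prism of $\widetilde{G}$, and set $W = V(\widetilde{K})$; write $\{x,y,z\}$, $\{x',y',z'\}$, $P_x$, $P_y$, $P_z$ for the pieces of $\widetilde{K}$. I would consider the induced subtrigraph $G[W]$ and note that, since $\widetilde{G}$ is the full realization, every non-edge of $\widetilde{K}$ is a strongly anti-adjacent pair of $G$, while every edge of $\widetilde{K}$ is either strongly adjacent or semi-adjacent in $G$. Consequently the adjacent pairs of $G[W]$ are exactly the edges of $\widetilde{K}$, so $G[W]$ already has the combinatorial ``shape'' of a prism: $P_x$, $P_y$, $P_z$ are branches of $G[W]$ with the prescribed endpoints, the only adjacent pairs between any two of them are the two prescribed ones, and (because the only triangles of $G[W]$ are the two triangle pieces) these branches are flat in $G[W]$. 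Thus $G[W]$ is a prism-trigraph if and only if the two triangle pieces are \emph{strong} triangles, i.e.\ all six of the pairs $xy,yz,zx,x'y',y'z',z'x'$ are strongly adjacent in $G$. The whole statement therefore reduces to the claim that no triangle edge of $\widetilde{K}$ is a semi-adjacent pair of $G$.

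To prove this claim I would argue by contradiction and use only ISK4-freeness. By the symmetry of the prism it suffices to treat one edge, say $xy$; suppose it is semi-adjacent in $G$. Let $\widehat{G}$ be the realization of $G$ obtained from $\widetilde{G}$ by turning the (semi-adjacent) pair $xy$ into a non-edge and leaving every other pair as in $\widetilde{G}$; this is a legitimate realization, and since $G$ is \{ISK4,wheel\}-free, $\widehat{G}$ is ISK4-free. On the other hand $\widehat{G}[W]$ is exactly $\widetilde{K}$ with the single edge $xy$ deleted, and I would check directly that this graph is a subdivision of $K_4$: its four branch vertices are $z,x',y',z'$, joined by six internally disjoint paths, namely the three intact edges $x'y'$, $x'z'$, $y'z'$ of the untouched triangle, the path $P_x$ followed by the edge $xz$, the path $P_y$ followed by the edge $yz$, and the path $P_z$ itself. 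Each of these six paths has length at least one, they meet only at the branch vertices, and (using that $xy$ and $x'y'$ were the only adjacent pairs between $P_x$ and $P_y$, and similarly for the other branch pieces, with $xy$ now removed) the induced subgraph $\widehat{G}[W]$ contains no further edges. Hence $\widehat{G}[W]$ is an induced subdivision of $K_4$, contradicting the ISK4-freeness of $\widehat{G}$.

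The main work, and the only genuinely non-routine step, is the structural observation that deleting a single triangle edge from a prism yields an induced subdivision of $K_4$; once the four branch vertices and six connecting paths are identified as above, the verification is immediate from the definition of a prism. Granting the claim, all six triangle pairs are strongly adjacent, so the two triangle pieces of $G[W]$ are strong triangles, and by the reduction in the first paragraph $G[W]$ is a prism-trigraph. Thus $G$ contains a prism as an induced subtrigraph, as desired. I remark that the wheel-free hypothesis is not used in this argument.
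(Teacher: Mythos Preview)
Your proof is correct and follows essentially the same approach as the paper's: take the induced subtrigraph $H=G[W]$ on the vertex set carrying the prism in the full realization, and argue that if a triangle pair were semi-adjacent then $H$ would be an ISK4-trigraph, contradicting ISK4-freeness. The paper compresses your entire contradiction argument into the single clause ``the two triangles of $H$ are strong, for otherwise, $H$ would be an ISK4-trigraph''; your explicit identification of the four branch vertices $z,x',y',z'$ and the six connecting paths simply unpacks this assertion, and your remark that wheel-freeness is unused is also accurate.
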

\begin{proof}
Let $H$ be an induced subtrigraph of $G$ such that the full realization of $H$ is a prism. Then the two triangles of $H$ are strong, for otherwise, $H$ would be an ISK4-trigraph, contrary to the fact that $G$ is ISK4-free. It follows that the trigraph $H$ is a prism.
\end{proof}

\begin{proposition} \label{K33-strong} Let $G$ be an ISK4-free $K_{3,3}$-trigraph. Then $G$ contains no semi-adjacent pairs (i.e., $G$ is a strong $K_{3,3}$).
\end{proposition}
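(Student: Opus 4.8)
The plan is to exploit the rigidity of being a $K_{3,3}$-trigraph: I will first pin down the adjacency structure of $G$ almost completely, and then show that a single semi-adjacent pair could be ``flipped'' to produce a realization of $G$ that is (or contains) a subdivision of $K_4$, contradicting ISK4-freeness. Since $G$ is a $K_{3,3}$-trigraph, I fix a realization $\widetilde{G}$ of $G$ that is isomorphic to $K_{3,3}$, together with a bipartition $\{a_1,a_2,a_3\}$, $\{b_1,b_2,b_3\}$ realizing this isomorphism. As the vertex set of any realization equals $V(G)$, this gives $|V(G)|=6$. Every \emph{same-side} pair (one of $a_ia_j$, $b_ib_j$) is a non-edge of $\widetilde{G}$ and hence anti-adjacent in $G$, while every \emph{cross} pair $a_ib_j$ is an edge of $\widetilde{G}$ and hence adjacent in $G$. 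Therefore any semi-adjacent pair of $G$ is either a same-side pair or a cross pair, and I treat these two cases separately.

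For the main argument, I suppose toward a contradiction that $e_0$ is a semi-adjacent pair of $G$, and I build a realization $R$ of $G$ by assigning $e_0$ the value opposite to the one it has in $\widetilde{G}$ and assigning every other semi-adjacent pair the same value it has in $\widetilde{G}$. This is a valid realization because each semi-adjacent pair may be decided either way; by construction, $R$ agrees with $K_{3,3}$ except on the pair $e_0$. Thus $R = K_{3,3} - e_0$ if $e_0$ is a cross pair, and $R = K_{3,3} + e_0$ if $e_0$ is a same-side pair.

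The crux is then a short finite check that each of these two six-vertex graphs contains an induced subdivision of $K_4$. In the cross case, say $R = K_{3,3} - a_1b_1$, I would verify that $R$ is \emph{itself} a subdivision of $K_4$: the branch vertices are the four degree-three vertices $a_2,a_3,b_2,b_3$; the four cross pairs among them are edges; and the two same-side pairs $a_2a_3$ and $b_2b_3$ are replaced by the paths $a_2 - b_1 - a_3$ and $b_2 - a_1 - b_3$, where $a_1$ and $b_1$ are the two degree-two subdivision vertices. In the same-side case, say $R = K_{3,3} + a_1a_2$, I would delete $b_3$ and verify that the subgraph of $R$ induced by $\{a_1,a_2,a_3,b_1,b_2\}$ is a subdivision of $K_4$ with branch vertices $a_1,a_2,b_1,b_2$: five of the six required adjacencies are edges of $R$, and the last, $b_1b_2$, is replaced by the path $b_1 - a_3 - b_2$, with $a_3$ the unique subdivision vertex. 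Either way $R$ contains an ISK4, contradicting the assumption that every realization of $G$ is ISK4-free; hence $G$ has no semi-adjacent pair.

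I expect the only delicate point to be the verification in the last paragraph, specifically confirming that the five- or six-vertex subgraph exhibited is genuinely \emph{induced} in $R$. This reduces to noting that every same-side pair other than the flipped one remains a non-edge of $R$ (being either strongly anti-adjacent in $G$ or decided as a non-edge to match $\widetilde{G}$), while the cross pairs used are edges. It is also worth stressing at the outset that producing a \emph{single} realization containing an ISK4 suffices, precisely because ISK4-freeness of a trigraph is defined as ISK4-freeness of all of its realizations.
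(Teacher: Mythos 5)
Your proof is correct and takes essentially the same approach as the paper: the paper likewise observes that a semi-adjacent cross pair would make $G$ itself an ISK4-trigraph (via the realization $K_{3,3}$ minus that edge, which is a subdivision of $K_4$), and that a semi-adjacent same-side pair, say in $\{a_1,a_2,a_3\}$, would make $G\smallsetminus b_3$ an ISK4-trigraph (via the five-vertex subdivision of $K_4$ you exhibit). You have simply written out explicitly the finite verifications that the paper leaves to the reader.
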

\begin{proof}
Since $G$ is a $K_{3,3}$-trigraph, we know that $V(G)$ can be partitioned into two stable sets of size three, say $\{a_1,a_2,a_3\}$ and $\{b_1,b_2,b_3\}$, that are complete to each other. Note that $\{a_1,a_2,a_3\}$ and $\{b_1,b_2,b_3\}$ are strongly complete to each other, for otherwise, $G$ would be an ISK4-trigraph, contrary to the fact that $G$ is ISK4-free. Further, $\{a_1,a_2,a_3\}$ is a strongly stable set, for otherwise, $G \smallsetminus b_3$ would be an ISK4-trigraph; similarly, $\{b_1,b_2,b_3\}$ is a strongly stable set. Thus, $G$ contains no semi-adjacent pairs, and $G$ is a strong $K_{3,3}$.
\end{proof}

The following is Lemma 2.2 from~\cite{MR2927414}.

\begin{lemma}\label{lemma22}\cite{MR2927414} Let $G$ be an ISK4-free graph. Then either $G$ is a series-parallel graph, or $G$ contains a prism, a wheel, or a $K_{3,3}$ as an induced subgraph.
\end{lemma}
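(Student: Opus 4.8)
The plan is to argue by minimality. Assume $G$ is ISK4-free but not series-parallel. By the definition of series-parallel, $G$ then contains a subdivision of $K_4$ as a (not necessarily induced) subgraph; among all such subdivisions, fix one, $F$, with the fewest vertices. Write $a,b,c,d$ for its four branch vertices and $P_{ab},P_{ac},P_{ad},P_{bc},P_{bd},P_{cd}$ for its six internally disjoint branches. Since $G$ is ISK4-free, $G[V(F)]$ is not a subdivision of $K_4$; as $F$ is a subgraph of $G[V(F)]$ on the same vertex set, this forces $G[V(F)]$ to contain at least one \emph{chord}, that is, an edge of $G$ joining two vertices of $F$ that is not an edge of $F$.

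First I would use the minimality of $F$ to restrict the possible chords. If some chord had both endpoints on a single branch, or joined a branch vertex to an interior vertex of an incident branch, then rerouting along the chord would shortcut that branch and produce a subdivision of $K_4$ on strictly fewer vertices, contradicting the choice of $F$. Hence every surviving chord joins two \emph{distinct} branches; relative to $F$, such a pair of branches either shares a branch vertex or is vertex-disjoint (``opposite''). A chord from a branch vertex to the interior of a non-incident branch is handled by the same rerouting idea, promoting the interior endpoint to a new branch vertex and again contradicting minimality unless it falls into one of the two permitted patterns.

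The heart of the proof is the analysis of these inter-branch chords, and this is where I expect the main difficulty to lie. The three outcomes of the lemma correspond exactly to the three chord patterns one meets. A chord joining the interiors of two branches that share a branch vertex (say $P_{ac}$ and $P_{bc}$ at $c$) closes up, together with the two half-branches at $c$ and the rest of $F$, into two triangles linked by three paths, i.e.\ a prism; a chord joining the interiors of two opposite branches (say $P_{ab}$ and $P_{cd}$) closes up into a $K_{3,3}$; and whenever the chords are too numerous or too ``crossing'' to fit either pattern, one of the resulting holes acquires a vertex with three neighbours on it, yielding a wheel. In each subcase I would first rule out, using the minimality of $F$, any configuration that would instead produce a smaller subdivision of $K_4$, and then verify that the induced subgraph on the relevant vertex subset is precisely a prism, a $K_{3,3}$, or contains a wheel. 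Lemma~\ref{lemma45} and Lemma~\ref{lemma43} can be used to streamline the rerouting: $F$ together with an extra chord-path is cyclically $3$-connected, hence either a theta or the source of a fresh $K_4$-subdivision, which lets me locate the desired structure or contradict minimality.

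The principal obstacle is the bookkeeping: enumerating which pairs of branches a chord may join, tracking the adjacencies forced by ISK4-freeness when several chords are present simultaneously, and confirming the induced-subgraph structure in each case. The topological rerouting steps themselves are routine once the case distinctions are set up, so the real work lies in organizing the case analysis so that every admissible chord configuration is seen to deliver a prism, a $K_{3,3}$, or a wheel.
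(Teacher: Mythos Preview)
The present paper does not prove this lemma at all; it merely quotes it as Lemma~2.2 of \cite{MR2927414} and uses it as a black box. So there is no proof here to compare your attempt against.

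That said, your overall plan---take a vertex-minimal $K_4$-subdivision $F$ in $G$ and analyse the chords of $G[V(F)]$---is indeed the strategy used in \cite{MR2927414}. Your topological intuition is also correct: if $uv$ is a chord with $u$ and $v$ in the interiors of two branches sharing an endpoint, then $F\cup\{uv\}$ is a subdivision of the prism, and if the two branches are opposite, $F\cup\{uv\}$ is a subdivision of $K_{3,3}$. But your write-up conflates ``subdivision of $X$'' with ``induced copy of $X$'', and that gap is exactly where all the work lies. A subdivision of the prism is an induced prism only when every one of its nine branches has length one; a subdivision of $K_{3,3}$ is an induced $K_{3,3}$ only under the analogous constraint. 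Forcing those branch lengths down is not automatic: you must repeatedly invoke the minimality of $F$ and ISK4-freeness to show that any longer branch would either shorten $F$ or produce a wheel. Your sketch does not indicate how this is done, and the sentence ``closes up into a $K_{3,3}$'' (or ``into a prism'') is simply false without that argument. Similarly, the wheel does not arise merely when chords are ``too numerous''; it comes out of specific short-branch configurations during the reduction. So while the skeleton of your plan matches \cite{MR2927414}, the case analysis you describe is not the one that actually works, and as written the argument has a genuine gap at the passage from subdivision to induced subgraph.
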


\begin{lemma} \label{K33-or-prism} Let $G$ be an \{ISK4,wheel\}-free trigraph. Then either $G$ is series-parallel, or $G$ contains a prism or a strong $K_{3,3}$ as an induced subtrigraph.
\end{lemma}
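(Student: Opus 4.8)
The plan is to pass to the full realization $\widetilde{G}$ of $G$, apply Lemma~\ref{lemma22} there, and then use the two preceding propositions to transfer the conclusion back to the trigraph level. First I would observe that since $G$ is \{ISK4,wheel\}-free, \emph{every} realization of $G$ is both ISK4-free and wheel-free; in particular, the full realization $\widetilde{G}$ is an ISK4-free (and wheel-free) graph. Applying Lemma~\ref{lemma22} to $\widetilde{G}$ then yields that either $\widetilde{G}$ is series-parallel, or $\widetilde{G}$ contains a prism, a wheel, or a $K_{3,3}$ as an induced subgraph. Because $\widetilde{G}$ is wheel-free, the wheel outcome is impossible, so only three cases remain.

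In the first case, $\widetilde{G}$ is series-parallel; by definition a trigraph is series-parallel precisely when its full realization is series-parallel, so $G$ is series-parallel and we are done. In the second case, $\widetilde{G}$ contains a prism as an induced subgraph, and Proposition~\ref{prism-realization} immediately gives that $G$ contains a prism as an induced subtrigraph.

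The third case is the one requiring slightly more care. Suppose $\widetilde{G}$ contains an induced $K_{3,3}$ on a vertex set $X$. Then the full realization of the induced subtrigraph $G[X]$ is exactly the induced subgraph $\widetilde{G}[X]$, which is isomorphic to $K_{3,3}$; since the full realization of $G[X]$ is itself a realization of $G[X]$, we conclude that $G[X]$ is a $K_{3,3}$-trigraph. As $G$ is ISK4-free, so is $G[X]$, and Proposition~\ref{K33-strong} then forces $G[X]$ to contain no semi-adjacent pairs, that is, $G[X]$ is a strong $K_{3,3}$. Hence $G$ contains a strong $K_{3,3}$ as an induced subtrigraph, which completes the proof.

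I expect the argument to be almost entirely bookkeeping about realizations, with no genuine combinatorial obstacle: the real work was already done in Lemma~\ref{lemma22} and in Propositions~\ref{prism-realization} and~\ref{K33-strong}. The only point that needs attention is the transfer in the $K_{3,3}$ case, where one must verify that the induced subtrigraph sitting over the $K_{3,3}$ found in $\widetilde{G}$ really is a $K_{3,3}$-trigraph (so that Proposition~\ref{K33-strong} applies); this follows from the fact that the full realization of $G[X]$ coincides with $\widetilde{G}[X]$.
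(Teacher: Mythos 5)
Your proposal is correct and follows essentially the same route as the paper: pass to the full realization, apply Lemma~\ref{lemma22}, rule out the wheel outcome by wheel-freeness, and transfer the prism and $K_{3,3}$ cases back via Propositions~\ref{prism-realization} and~\ref{K33-strong}. The only difference is that you spell out the (routine) verification that $G[X]$ is a $K_{3,3}$-trigraph, which the paper leaves implicit.
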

\begin{proof}
Let $\widetilde{G}$ be the full realization of $G$. Then $\widetilde{G}$ is an \{ISK4,wheel\}-free graph, and so by Lemma~\ref{lemma22}, we know that either $\widetilde{G}$ is a series-parallel graph, or $\widetilde{G}$ contains a prism or a $K_{3,3}$ as an induced subgraph. In the former case, $G$ is a series-parallel trigraph, and we are done. So assume that $\widetilde{G}$ contains a prism or a $K_{3,3}$ as an induced subgraph. Then by Propositions~\ref{prism-realization} and~\ref{K33-strong}, $G$ contains a prism or a strong $K_{3,3}$ as an induced subtrigraph.
\end{proof}

\begin{proposition} \label{K33-vertex} Let $G$ be an \{ISK4,wheel\}-free trigraph, let $H$ be a maximal thick bipartite induced subtrigraph of $G$, let $(A,B)$ be a bipartition of $H$, and let $v \in V(G) \smallsetminus V(H)$. Then $v$ has at most one neighbor in $A$ and at most one neighbor in $B$.
\end{proposition}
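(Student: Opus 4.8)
The plan is to assume for contradiction that $v$ has at least two neighbors in $A$ and to derive a contradiction; the statement for $B$ then follows by the symmetry between the two sides of the bipartition. Throughout I will use that, since $G$ is \{ISK4,wheel\}-free, \emph{every} realization of $G$ is \{ISK4,wheel\}-free, so it suffices to exhibit a single realization containing an induced wheel or ISK4. I will also use two structural facts about $H$: since $H$ is a (thick) complete bipartite trigraph it has no semi-adjacent pairs, and its induced holes are exactly the squares $a_i-b_j-a_k-b_\ell-a_i$ with $a_i,a_k\in A$ and $b_j,b_\ell\in B$. Fix two neighbors $a_1,a_2$ of $v$ in $A$. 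The first step is to show that $v$ is \emph{strongly} anti-complete to $B$: if $v$ had a neighbor $b_1\in B$, then picking $b_2\in B\smallsetminus\{b_1\}$ (which exists as $|B|\ge 3$) the set $\{a_1,b_1,a_2,b_2\}$ induces a square, and in the realization that turns $va_1,va_2,vb_1$ into edges, $v$ together with this square forms a wheel. Hence $v$ has no neighbor in $B$, i.e.\ $\theta_G(vb)=-1$ for every $b\in B$.

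The crux is the second step: showing that $v$ is strongly complete to $A$. Suppose not, so there is $a_3\in A$ with $\theta_G(va_3)\le 0$. The key gadget is that a square whose two ``diagonals'' are each subdivided once is a chordless subdivision of $K_4$: concretely, for any distinct $b_1,b_2\in B$, the set $\{a_1,a_2,b_1,b_2,a_3,v\}$ induces (in a suitable realization) a subdivision of the $K_4$ on $\{a_1,a_2,b_1,b_2\}$ in which the edge $a_1a_2$ is subdivided by $v$ and the edge $b_1b_2$ is subdivided by $a_3$. Indeed the four cross pairs $a_ib_j$ are strongly adjacent, $v$ is adjacent to $a_1,a_2$ and strongly anti-adjacent to $b_1,b_2$, and $a_3$ is strongly adjacent to $b_1,b_2$ while $\theta_G(va_3)\le 0$; realizing $va_1,va_2$ as edges and $va_3$ as a non-edge makes this induced subtrigraph an ISK4-trigraph. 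This requires a non-neighbor of $v$ playing the role of $a_3$: if $a_3$ is a strong anti-neighbor then $a_3\neq a_1,a_2$ automatically; if instead $a_3$ is merely semi-adjacent to $v$, then $v$ is adjacent to all of $A$, so $|A|\ge 3$ lets me pick $a_1,a_2\in A\smallsetminus\{a_3\}$ and realize the semi-adjacent pair $va_3$ as a non-edge. Either way I obtain an ISK4, a contradiction, so $v$ is strongly complete to $A$.

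The final step invokes maximality. Having shown $v$ is strongly complete to $A$ and strongly anti-complete to $B$, the trigraph $G[V(H)\cup\{v\}]$ is a thick complete bipartite trigraph with bipartition $(A,B\cup\{v\})$: both $A$ and $B\cup\{v\}$ are strongly stable and strongly complete to each other, and both sides have size at least three (since $|A|\ge 3$ and $|B\cup\{v\}|\ge 4$). As $V(H)\subsetneq V(H)\cup\{v\}$, this contradicts the maximality of $H$. I expect the main obstacle to be Step~2: the graph $H$ is complete bipartite and $v$ attaches to only one side, so $G[V(H)\cup\{v\}]$ is ``almost bipartite'' and contains no wheel and no \emph{obvious} ISK4 — the non-trivial move is to subdivide two disjoint edges of a $K_4$ rather than searching for the more familiar wheel. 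The secondary subtlety is that ``neighbor'' includes semi-adjacent pairs, which must be converted to non-edges in an appropriately chosen realization to make the ISK4 appear, and correspondingly one must check that ``no neighbor in $B$'' upgrades to ``strongly anti-complete to $B$'' before applying maximality.
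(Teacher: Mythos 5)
Your proof is correct; it uses the same two key six-vertex configurations as the paper (a wheel built on a square of $H$, and the subdivision of $K_4$ obtained from a square by subdividing its two ``diagonals'' with $v$ and a non-neighbor $a_3$), but it organizes the case analysis differently, and in two places more efficiently. First, the paper's opening move is a four-spoke wheel ruling out two neighbors on \emph{each} side, which forces it to return at the very end to the case where $v$ is strongly complete to $A$ and has exactly one neighbor in $B$ (dispatched there by a separate ISK4); your observation that three spokes $a_1,a_2,b_1$ into the square already give a wheel kills every neighbor of $v$ in $B$ at the outset and makes that final case vanish. Second, where the paper splits ``not strongly complete to $A$'' into a strong anti-neighbor case (direct ISK4) and a weak-neighbor case (handled by exhibiting a $K_{3,3}$-trigraph with a semi-adjacent pair and invoking Proposition~\ref{K33-strong}), you fold both into one argument by realizing the semi-adjacent pair $va_3$ as a non-edge — which is legitimate, since a trigraph is ISK4-free only if every realization of every induced subtrigraph is, and you correctly check that $a_3$ can be chosen distinct from $a_1,a_2$ in the semi-adjacent subcase. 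The net effect is a somewhat shorter proof using the same toolkit; the paper's detour through Proposition~\ref{K33-strong} buys nothing here that your realization argument does not.
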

\begin{proof}
Suppose first that $v$ has at least two neighbors in each of $A$ and $B$. Then there exist distinct $a_1,a_2 \in A$ and distinct $b_1,b_2 \in B$ such that $v$ is complete to $\{a_1,a_2,b_1,b_2\}$; but now $G[v,a_1,a_2,b_1,b_2]$ is a wheel-trigraph, contrary to the fact that $G$ is wheel-free.

Exploiting symmetry, we may now assume that $v$ has at most one neighbor in $B$; then $v$ has at least two strong anti-neighbors in $B$, call them $b_1$ and $b_2$. If $v$ has at most one neighbor in $A$, then we are done. So assume that $v$ has at least two neighbors in $A$, call them $a_1$ and $a_2$. Suppose first that $v$ has at least one strong anti-neighbor in $A$, call it $a$. Then $G[v,a_1,a_2,a,b_1,b_2]$ is an ISK4-trigraph, contrary to the fact that $G$ is ISK4-free. Thus, $v$ is complete to $A$. Suppose first that $v$ has a weak neighbor in $A$, and let $A' \subseteq A$ be such that $v$ has a weak neighbor in $A'$, and $|A'| = 3$. But then $G[A' \cup \{v,b_1,b_2\}]$ is a $K_{3,3}$-trigraph that contains a semi-adjacent pair, contrary to Proposition~\ref{K33-strong}. Thus, $v$ is strongly complete to $A$. If $v$ is strongly anti-complete to $B$, then $G[V(H) \cup \{v\}]$ contradicts the maximality of $H$. It follows that $v$ has a unique neighbor in $B$, call it $b$. But then $v$, $b$, $b_1$, and any two vertices of $A$ induce an ISK4-trigraph in $G$, contrary to the fact that $G$ is ISK4-free.
\end{proof}

\begin{proposition} \label{K33-comp} Let $G$ be an \{ISK4,wheel\}-free trigraph, let $H$ be a maximal induced thick bipartite subtrigraph of $G$, let $(A,B)$ be a bipartition of $H$, and let $C$ be a component of $G \smallsetminus V(H)$. Then the attachment of $C$ over $H$ contains at most one vertex of $A$ and at most one vertex of $B$.
\end{proposition}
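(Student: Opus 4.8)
The plan is to argue by contradiction: suppose the attachment of $C$ over $H$ contains two distinct vertices lying in a common part of the bipartition, and, exploiting the $A \leftrightarrow B$ symmetry of the hypotheses, assume these two vertices lie in $A$. The strategy imitates the ``minimal connected obstruction'' technique used for prisms (Lemma~\ref{prism-conn}) and for subdivisions of $K_4$ (Lemma~\ref{K4-conn}). The crucial choice is to let $P$ be an inclusion-wise \emph{minimal} connected induced subtrigraph of $C$ whose attachment over $H$ contains two distinct vertices of \emph{one common part} (not merely two vertices of $A$); defining minimality with respect to either part simultaneously is exactly what will make the argument go through.

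First I would show that $P$ is a narrow path. If it is not, Proposition~\ref{delete-three} supplies three vertices $v_1,v_2,v_3 \in V(P)$ whose individual deletion keeps $P$ connected; by minimality each $P \smallsetminus v_i$ has at most one attachment vertex in each part, so, since $P$ has two attachment vertices in $A$, deleting $v_i$ must destroy the second one. Because Proposition~\ref{K33-vertex} bounds each vertex of $C$ to at most one neighbor in $A$, this forces the $A$-attachment of $P$ to have size exactly two, say $\{a_1,a_2\}$, and forces each $v_i$ to be the unique neighbor in $P$ of $a_1$ or of $a_2$; a pigeonhole argument on three indices and two vertices then yields $v_i=v_j$ for some $i\neq j$, a contradiction. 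The same counting, applied to the two (distinct) endpoints $p_1,p_2$ of the path $P$, shows that $p_1$ is the unique neighbor of $a_1$ in $P$, that $p_2$ is the unique neighbor of $a_2$, and that the interior of $P$ is strongly anti-complete to $A$.

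The decisive step is then to observe that minimality \emph{also} bounds the attachment $B_P$ of $P$ over $B$: since $P \smallsetminus p_2$ has at most one neighbor in $B$ and $p_2$ has at most one neighbor in $B$ (Proposition~\ref{K33-vertex}), we get $|B_P| \le 2$. As $H$ is thick, $|B| \ge 3$, so $|B \smallsetminus B_P| \ge 1$, and this is precisely what rules out the genuinely dangerous configuration in which $P$ attaches to every vertex of $B$. I would then split into two cases. If $|B \smallsetminus B_P| \ge 2$, I pick distinct $b_1,b_2 \in B \smallsetminus B_P$ and a third vertex $a_3 \in A \smallsetminus \{a_1,a_2\}$ (available since $|A| \ge 3$): the four branch vertices $a_1,a_2,b_1,b_2$, with the pair $a_1a_2$ subdivided by $P$ and the pair $b_1b_2$ subdivided by $a_3$, induce an ISK4-trigraph, since $P$ is anti-complete to $\{b_1,b_2\}$ and $a_3$ is anti-complete to $V(P)$. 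If instead $|B \smallsetminus B_P| = 1$ (so $|B| = 3$ and $|B_P| = 2$), let $b^*$ be the unique vertex outside $B_P$ and let $b_1$ be any vertex of $B_P$: then $a_1 - p_1 - \cdots - p_2 - a_2 - b^* - a_1$ is a hole, and $b_1$ has the three neighbors $a_1$, $a_2$, and a vertex of $P$ on it, giving a wheel-trigraph. Either outcome contradicts the hypothesis that $G$ is \{ISK4,wheel\}-free.

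The main obstacle is conceptual rather than computational: one must resist taking $P$ minimal only with respect to having two neighbors in $A$, since that choice fails to control $B_P$, and a path attaching to all of $B$ cannot be closed into a chordless cycle and admits no obvious ISK4 routing. Taking minimality over both parts at once is what forces $|B_P| \le 2$ and thereby produces the free vertex $b^*$ (or free pair $b_1,b_2$) needed for the construction. The rest is routine bookkeeping: checking that every pair required to be an edge is an adjacent pair and every pair required to be a non-edge is an anti-adjacent pair, so that the exhibited configuration has a realization equal to a subdivision of $K_4$, respectively a wheel. This follows from the strong completeness of $A$ to $B$, the strong stability of each part, and the attachment properties of $P$ established above.
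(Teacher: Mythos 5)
Your proof is correct and follows essentially the same strategy as the paper's: take an inclusion-wise minimal connected induced subtrigraph $P$ whose attachment meets one part in at least two vertices, show it is a narrow path whose endpoints each have a unique neighbor in $A$ and whose interior is strongly anti-complete to $A$, bound the $B$-attachment by $2$ via minimality and Proposition~\ref{K33-vertex}, and then exhibit a forbidden induced configuration. The only substantive variation is the endgame: where you split on $|B \smallsetminus B_P|$ and finish the subcase $|B|=3$, $|B_P|=2$ with a wheel centered at a vertex of $B_P$, the paper splits on $|B_P|$ and in the case $|B_P|=2$ builds an ISK4 on $V(P)\cup\{a,a',b,b''\}$ with branch vertices $p$, $a$, $b$, $a'$ --- both arguments are valid.
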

\begin{proof}
Suppose otherwise, and fix a minimal connected induced subtrigraph $P$ of $C$ such that the attachment of $P$ over $H$ either contains more than one vertex of $A$, or more than one vertex of $B$. By Proposition~\ref{K33-vertex}, $P$ has at least two vertices.

Let us first show that $P$ is a narrow path. Suppose otherwise. By symmetry, we may assume that the attachment of $P$ over $H$ contains at least two vertices of $A$, call them $a_1$ and $a_2$. For each $i \in \{1,2\}$, let $P_i$ be the set of all vertices of $P$ that are adjacent to $a_i$. By construction, $P_1$ and $P_2$ are non-null, and by Proposition~\ref{K33-vertex}, they are disjoint. Using the fact that $P$ is connected, we let $P'$ be a narrow path in $P$ whose one endpoint belongs to $P_1$, and whose other endpoint belongs to $P_2$. Since $P$ is not a narrow path, we know that $P' \neq P$. But now $P'$ contradicts the minimality of $P$. This proves that $P$ is a narrow path. Let $p$ and $p'$ be the endpoints of $P$. (Since $|V(P)| \geq 2$, we know that $p \neq p'$.)

By symmetry, we may assume that the attachment of $P$ over $H$ contains at least two vertices of $A$. By the minimality of $P$, each end-vertex of $P$ has a neighbor in $A$; by Proposition~\ref{K33-vertex} then, each endpoint of $P$ has a unique neighbor in $A$. Let $a$ be the unique neighbor of $p$ in $A$, and let $a'$ be the unique neighbor of $p'$ in $A$. By the minimality of $P$, we know that $a \neq a'$, and that the interior of $P$ is strongly anti-complete to $A$. In particular then, the attachment of $P$ over $A$ contains exactly two vertices.

By the minimality of $P$, we know that the attachment of $P$ over $H$ contains at most two vertices of $B$. Now, suppose that the attachment of $P$ over $H$ contains exactly two vertices of $B$, call them $b$ and $b'$. Then by the minimality of $P$, we know that one of $p$ and $p'$ is adjacent to $b$ (and strongly anti-adjacent to $b'$), the other one is adjacent to $b'$ (and strongly anti-adjacent to $b$), and the interior of $P$ is strongly anti-complete to $B$. By symmetry, we may assume that $p$ is adjacent to $b$ and strongly anti-adjacent to $b'$, and that $p'$ is adjacent to $b'$ and strongly anti-adjacent to $b$. Using the fact that $|B| \geq 3$, we fix some $b'' \in B \smallsetminus \{b,b'\}$. But then $G[V(P) \cup \{a,a',b,b''\}]$ is an ISK4-trigraph, contrary to the fact that $G$ is ISK4-free.

From now on, we assume that the attachment of $P$ over $H$ contains at most one vertex of $B$. Since $|B| \geq 3$, it follows that there exist two distinct vertices in $B$ (call them $b_1,$ and $b_2$) that are strongly anti-complete to $P$. Further, since $\{a,a'\}$ is the attachment of $P$ over $A$, and since $|A| \geq 3$, there exists some $a'' \in A$ that is strongly anti-complete to $P$. But now $G[V(P) \cup \{a,a',a'',b_1,b_2\}]$ is an ISK4-trigraph, contrary to the fact that $G$ is ISK4-free.
\end{proof}

\begin{lemma} \label{K33-clique-cut} Let $G$ be an \{ISK4,wheel\}-free trigraph that contains a strong $K_{3,3}$ as an induced subtrigraph. Then either $G$ is a thick complete bipartite trigraph, or $G$ admits a clique-cutset.
\end{lemma}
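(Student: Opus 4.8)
The plan is to start from the given strong $K_{3,3}$, which (as noted in the text) is a thick complete bipartite trigraph, and to enlarge it to an inclusion-wise maximal induced subtrigraph $H$ of $G$ that is a thick complete bipartite trigraph. I would fix a bipartition $(A,B)$ of $H$; by thickness $|A|,|B|\ge 3$, so $|V(H)|\ge 6$. If $G=H$, then $G$ is itself a thick complete bipartite trigraph, hence a complete bipartite trigraph, and the first outcome holds. So from here on I would assume $V(H)\subsetneqq V(G)$ and aim to exhibit a clique-cutset of $G$.

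First I would dispose of the disconnected case: if $G$ is disconnected, then $\emptyset$ is a strong clique whose deletion disconnects $G$, i.e.\ $\emptyset$ is a clique-cutset, and we are done. Hence I may assume $G$ is connected. Next I would pick any component $C$ of $G\smallsetminus V(H)$ (which is non-null since $V(H)\subsetneqq V(G)$). Because $G$ is connected, the attachment $N$ of $C$ over $H$ is non-empty, and Proposition~\ref{K33-comp} guarantees that $N$ contains at most one vertex of $A$ and at most one vertex of $B$. Thus $N$ is one of $\{a\}$, $\{b\}$, or $\{a,b\}$ for some $a\in A$ and $b\in B$.

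The key step is then to verify that $N$ is a clique-cutset of $G$. That $N$ is a strong clique is immediate: singletons are trivially strong cliques, and in the remaining case the vertices $a\in A$ and $b\in B$ are strongly adjacent because $A$ is strongly complete to $B$ in $H$. To see that $N$ is a cutset, I would observe that no vertex of $C$ has a neighbor in $V(H)\smallsetminus N$ (by the definition of $N$) and none has a neighbor in any other component of $G\smallsetminus V(H)$; hence in $G\smallsetminus N$ there is no path from $C$ to the set $V(H)\smallsetminus N$. Since $|V(H)|\ge 6$ and $|N|\le 2$, the set $V(H)\smallsetminus N$ is non-empty, so $G\smallsetminus N$ is disconnected, and $N$ is a clique-cutset.

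I do not anticipate a serious obstacle here: the structural difficulty has already been absorbed into Propositions~\ref{K33-vertex} and~\ref{K33-comp}, which pin the attachment of an entire component down to at most one vertex on each side of the bipartition. The only points requiring genuine care are the bookkeeping that makes $N$ a bona fide cutset --- namely that $V(H)\smallsetminus N$ is non-empty, which rests on the thickness bound $|A|,|B|\ge 3$ --- and the observation that a two-vertex attachment $\{a,b\}$ is automatically a strong clique, which rests on $A$ being strongly complete to $B$.
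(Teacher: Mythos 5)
Your proposal is correct and follows essentially the same route as the paper's proof: take a maximal thick complete bipartite induced subtrigraph $H$ containing the given strong $K_{3,3}$, and if $H \neq G$, apply Proposition~\ref{K33-comp} to a component of $G \smallsetminus V(H)$ to obtain a clique-cutset. Your additional bookkeeping (the disconnected case, non-emptiness of $V(H) \smallsetminus N$, and why $N$ is a strong clique) just spells out what the paper's final sentence leaves implicit.
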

\begin{proof}
Let $H$ be a maximal induced subtrigraph of $G$ such that $H$ is a thick bipartite trigraph (such an $H$ exists because $G$ contains a strong $K_{3,3}$ as an induced subtrigraph). If $H = G$, then $G$ is a thick complete bipartite trigraph, and we are done. So assume that $H \neq G$. Let $(A,B)$ be a bipartition of $H$. Fix a component $C$ of $G \smallsetminus V(H)$. Then by Proposition~\ref{K33-comp}, the attachment of $C$ over $H$ contains at most one vertex of $A$, and at most one vertex of $B$. Since $A$ is strongly complete to $B$, it follows that the attachment of $C$ over $H$ is a clique-cutset of $G$.
\end{proof}

\subsection{Proof of the main theorem}

We are finally ready to prove Theorem~\ref{thm-decomp}, our decomposition theorem for \{ISK4,wheel\}-free trigraphs. In fact, we prove a slightly stronger theorem (Theorem~\ref{thm-decomp-strong}), stated below. Clearly, Theorem~\ref{thm-decomp} is an immediate corollary of Theorem~\ref{thm-decomp-strong}.

\begin{theorem} \label{thm-decomp-strong} Let $G$ be an \{ISK4,wheel\}-free trigraph. Then at least one of the following holds:
\begin{itemize}
\item $G$ is a series-parallel trigraph;
\item $G$ is a thick complete bipartite trigraph;
\item $G$ is a line trigraph of a cyclically 3-connected, chordless graph of maximum degree at most three;
\item $G$ admits a clique-cutset;
\item $G$ admits a stable 2-cutset.
\end{itemize}
\end{theorem}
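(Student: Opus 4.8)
The plan is to combine the structural dichotomy from Lemma~\ref{K33-or-prism} with the two ``attachment'' lemmas already established. First I would handle the easy base cases: if $G$ is a null trigraph or has at most a few vertices, or if $G$ is disconnected (in which case $\emptyset$ is a clique-cutset) or has a cut-vertex (giving a clique-cutset), we are immediately done. We may also invoke Proposition~\ref{prop-diamond}: since $G$ is $\{K_4,\mathrm{wheel}\}$-free (as any ISK4-free trigraph is $K_4$-free), either $G$ is diamond-free, or $G$ admits a clique-cutset or a stable $2$-cutset, in which cases we are done. Hence we may assume henceforth that $G$ is an $\{\mathrm{ISK4},\mathrm{wheel},\mathrm{diamond}\}$-free trigraph with no clique-cutset and no stable $2$-cutset; this is exactly the hypothesis shared by all the attachment lemmas.

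Next I would apply Lemma~\ref{K33-or-prism}. If $G$ is series-parallel, we land in the first outcome and are done. Otherwise $G$ contains a prism or a strong $K_{3,3}$ as an induced subtrigraph. In the strong $K_{3,3}$ case, Lemma~\ref{K33-clique-cut} tells us that either $G$ is a thick complete bipartite trigraph (second outcome) or $G$ admits a clique-cutset (fourth outcome), finishing this branch. So the crux is the remaining case: $G$ contains a prism. The key observation here is that a prism is precisely a line trigraph of a theta, and a theta is cyclically $3$-connected, chordless, and of maximum degree three. Thus $G$ contains an induced subtrigraph that is a line trigraph of a cyclically $3$-connected, chordless graph of maximum degree at most three, which is exactly the hypothesis of Lemma~\ref{cyclic3-decomp}.

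Finally I would invoke Lemma~\ref{cyclic3-decomp}, which concludes that either $G$ itself is a line trigraph of a cyclically $3$-connected, chordless graph of maximum degree at most three (third outcome), or $G$ admits a clique-cutset or a stable $2$-cutset (fourth or fifth outcome). This exhausts all cases and yields the theorem. The main conceptual obstacle is really concentrated in the supporting lemmas rather than in this final assembly: one must verify that a prism is a line trigraph of a theta (so that Lemma~\ref{cyclic3-decomp} applies) and that the outcome ``thick complete bipartite'' from Lemma~\ref{K33-clique-cut} matches the ``complete bipartite'' outcome of the theorem. The delicate point to record is the reduction to the diamond-free case: we must be careful that Proposition~\ref{prop-diamond} applies (it needs only $\{K_4,\mathrm{wheel}\}$-freeness, which we have) and that, once we assume no clique-cutset and no stable $2$-cutset, $G$ is genuinely diamond-free, so that all subsequent lemmas — each of which requires the hypothesis $\{\mathrm{ISK4},\mathrm{wheel},\mathrm{diamond}\}$-free — are legitimately invocable.
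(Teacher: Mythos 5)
Your proposal is correct and follows essentially the same route as the paper's proof: reduce to the diamond-free case via Proposition~\ref{prop-diamond}, apply Lemma~\ref{K33-or-prism} to split into the series-parallel, strong $K_{3,3}$, and prism cases, and dispatch the latter two with Lemma~\ref{K33-clique-cut} and Lemma~\ref{cyclic3-decomp} respectively, using the observation that a prism is a line trigraph of a theta. The only cosmetic difference is that the paper explicitly passes to a maximal line-trigraph subtrigraph before invoking Lemma~\ref{cyclic3-decomp}, which your direct invocation of that lemma makes unnecessary.
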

\begin{proof}
We may assume that $G$ is diamond-free, for otherwise, Proposition~\ref{prop-diamond} guarantees that $G$ admits a clique-cutset or a stable 2-cutset, and we are done. Further, by Lemma~\ref{K33-or-prism}, either $G$ is a series-parallel trigraph, or it contains a prism or a strong $K_{3,3}$ as an induced subtrigraph. If $G$ is series-parallel, then we are done. If $G$ contains a strong $K_{3,3}$ as an induced subtrigraph, then by Lemma~\ref{K33-clique-cut}, either $G$ is a thick complete bipartite trigraph, or $G$ admits a clique-cutset, and in either case, we are done. It remains to consider the case when $G$ contains an induced prism. Note that the prism is a line trigraph of a theta, and a theta is a cyclically 3-connected, chordless graph of maximum degree at most three. Now let $K$ be a maximal induced subtrigraph of $G$ such that $K$ is a line trigraph of a cyclically 3-connected, chordless graph of maximum degree at most three. If $K = G$, then we are done. Otherwise, Lemma~\ref{cyclic3-decomp} guarantees that $G$ admits a clique-cutset or a stable 2-cutset. This completes the argument.
\end{proof}

\end{document}